\numberwithin{equation}{section}
\newcommand*{\centerfloat}{%
  \parindent \z@
  \leftskip \z@ \@plus 1fil \@minus \textwidth
  \rightskip\leftskip
  \parfillskip \z@skip}
\newcounter{ctr}
\theoremstyle{plain}
\newtheorem{theorem}{Theorem}[section]
\newtheorem{lemma}[theorem]{Lemma}
\newtheorem{corollary}[theorem]{Corollary}
\newtheorem{proposition}[theorem]{Proposition}
\newtheorem{conjecture}[theorem]{Conjecture}
\theoremstyle{definition}
\newtheorem{definition}[theorem]{Definition}
\newtheorem{remark}[theorem]{Remark}
\newtheorem{example}[theorem]{Example}
\newcommand{\ignore}[1]{}
\newcommand{\A}{{\ensuremath{\mathcal{A}}}}
\newcommand{\bb}{\ensuremath{\mathbf{B}}}
\newcommand{\bD}{\ensuremath{\mathbf{D}}}
\newcommand{\CC}{\ensuremath{\mathbb{C}}}
\newcommand{\End}{\text{\rm End}}
\newcommand{\HH}{\ensuremath{H}}
\newcommand{\HHH}{\bb}
\newcommand{\Mod}{\ensuremath{\mathbf{Mod}}}
\newcommand{\p}{\ensuremath{\mathfrak{p}}}
\newcommand{\QQ}{\ensuremath{\mathbb{Q}}}
\newcommand{\R}{\ensuremath{\mathbf{R}}}
\newcommand{\NN}{\ensuremath{\mathbb{N}}}
\newcommand{\fs}{\ensuremath{\mathfrak{s}}}
\newcommand{\ZZ}{\ensuremath{\mathbb{Z}}}
\newcommand{\be}{\begin{equation}}
\newcommand{\ee}{\end{equation}}
\renewcommand{\SS}{\ensuremath{\mathcal{S}}}
\newcommand{\tsr}{\ensuremath{\otimes}}
\newcommand{\eS}{\widehat{\SS}}
\newcommand{\creading}{\text{\rm colword}}
\newcommand{\mysquare}{}
\newcommand{\mysquareb}{\, }
\DeclareMathOperator{\inside}{inside}
\DeclareMathOperator{\outside}{outside}
\DeclareMathOperator{\diagram}{diagram}
\DeclareMathOperator{\upp}{up}
\DeclareMathOperator{\down}{down}
\DeclareMathOperator{\chainup}{top}
\DeclareMathOperator{\chaindown}{bot}
\DeclareMathOperator{\downpath}{downpath}
\DeclareMathOperator{\bpath}{path}
\DeclareMathOperator{\Par}{Par}
\DeclareMathOperator{\tpar}{fPar}
\DeclareMathOperator{\spin}{spin}
\DeclareMathOperator{\Span}{span}
\DeclareMathOperator{\Gr}{Gr}
\DeclareMathOperator{\Fl}{Fl}
\DeclareMathOperator{\crop}{crop}
\DeclareMathOperator{\nr}{\overline{\mathbf{r}}}
\DeclareMathOperator{\style}{band}
\newcommand{\dd}{{\mathbf d}}
\newcommand{\crc}[1]{#1\star}
\newcommand{\Inv}{\ensuremath{{\text{\rm Inv}}}}
\newcommand{\myhat}[1]{#1}  
\newcommand{\sone}{\ensuremath{\bar{s}}}
\newcommand{\SMT}{\text{\rm SMT}}
\newcommand{\SSYT}{\text{\rm SSYT}}
\newcommand{\CTAB}{\text{\rm FTAB}}
\newcommand{\HRI}{\ensuremath{H(\Psi,\gamma)}}
\newlength{\mycellsize}
\newcommand\mytbl[1]{
\vcenter{
\let\\=\cr
\baselineskip=-16000pt \lineskiplimit=16000pt \lineskip=0pt
\halign{&\mytblcell{##}\cr#1\crcr}}}
\newcommand{\mytblcell}[1]{{%
\def \arg{#1}\def \void{}%
\ifx \void \arg
\vbox to \mycellsize{\vfil \hrule width \mycellsize height 0pt}%
\else \unitlength=\mycellsize
\begin{picture}(1,1)
\put(0,0){\makebox(1,1){$#1\vphantom{\crc{#1}}$}}
\put(0,0){\line(1,0){1}}
\put(0,1){\line(1,0){1}}
\put(0,0){\line(0,1){1}}
\put(1,0){\line(0,1){1}}
\end{picture}%
\fi}}
\newcommand*\encircle[1]{\tikz[baseline=(char.base)]{
    \node[shape=circle,draw,inner sep=.74pt] (char) {\ensuremath{#1}};}}
\newlength{\cellsize}
\newcommand\mytableau[1]{
\vcenter{
\let\\=\cr
\baselineskip=-16000pt \lineskiplimit=16000pt \lineskip=0pt
\halign{&\mytableaucell{##}\cr#1\crcr}}}
\newcommand{\mytableaucell}[1]{{%
\def \arg{#1}\def \void{}%
\ifx \void \arg
\vbox to \cellsize{\vfil \hrule width \cellsize height 0pt}%
\else \unitlength=\cellsize
\begin{picture}(1,1)
\put(0,0){\makebox(1,1){$#1\vphantom{\crc{#1}}$}}
\put(0,0){\line(1,0){1}}
\put(0,1){\line(1,0){1}}
\put(0,0){\line(0,1){1}}
\put(1,0){\line(0,1){1}}
\end{picture}%
\fi}}
\newcommand\boldtableau[1]{
\vcenter{
\let\\=\cr
\baselineskip=-16000pt \lineskiplimit=16000pt \lineskip=0pt
\halign{&\boldtableaucell{##}\cr#1\crcr}}}
\newcommand{\boldtableaucell}[1]{{%
\def \arg{#1}\def \void{}%
\ifx \void \arg
\vbox to \cellsize{\vfil \hrule width \cellsize height 0pt}%
\else \unitlength=\cellsize
\begin{picture}(1,1)
\put(0,0){\makebox(1,1){$\mathbf{#1\vphantom{\crc{#1}}}$}}
\put(0,0){\line(1,0){1}}
\put(0,1){\line(1,0){1}}
\put(0,0){\line(0,1){1}}
\put(1,0){\line(0,1){1}}
\end{picture}%
\fi}}
\title{$k$-Schur expansions of Catalan functions}
\keywords{Macdonald polynomials, Gromov-Witten invariants, Schubert structure constants, parabolic Hall-Littlewood polynomials, strong tableaux}
\begin{document}

\author{Jonah Blasiak}
\address{Department of Mathematics, Drexel University, Philadelphia, PA 19104}
\email{jblasiak@gmail.com}

\author{Jennifer Morse}
\address{Department of Math, University of Virginia, Charlottesville, VA 22904}
\email{morsej@virginia.edu}

\author{Anna Pun}
\address{Department of Mathematics, Drexel University, Philadelphia, PA 19104}
\email{annapunying@gmail.com}

\author{Daniel Summers}
\address{Department of Mathematics, Drexel University, Philadelphia, PA 19104}
\email{danielsummers72@gmail.com}

\thanks{Authors were supported by NSF Grants DMS-1600391 (J.~B.)
and DMS-1833333  (J.~M.). }

\begin{abstract}
We make a broad conjecture about the $k$-Schur positivity of Catalan functions,
symmetric functions
which generalize the (parabolic) Hall-Littlewood polynomials.
We resolve the conjecture with positive combinatorial formulas in cases which
address the $k$-Schur expansion of
(1) Hall-Littlewood polynomials,
proving the  $q=0$ case of the strengthened Macdonald positivity conjecture from \cite{LLM};
(2) the product of a Schur function and a $k$-Schur function when the indexing partitions
concatenate to a partition, describing a class of Gromov-Witten invariants
for the quantum cohomology of complete flag varieties;
(3) $k$-split polynomials, solving a substantial special case of
a problem of Broer and Shimozono-Weyman on parabolic Hall-Littlewood polynomials~\cite{SW}.
In addition,
we prove the conjecture that the $k$-Schur functions
 defined via $k$-split polynomials \cite{LMksplit}
agree with those defined in terms of strong tableaux~\cite{LLMSMemoirs1}.
\end{abstract}


\maketitle

\section{Introduction}

Catalan functions are elements of the ring
$\Lambda = \ZZ[t][h_1,h_2,\dots]$ of symmetric functions in infinitely many variables
$\mathbf{x} = (x_1, x_2, \dots)$,
where  $h_d = h_d(\mathbf{x})=\sum_{i_1\leq\cdots \leq i_d} x_{i_1}\cdots x_{i_d}$.
Studied in full generality first by Chen-Haiman \cite{ChenThesis} and Panyushev \cite{Panyushev},
Catalan functions are $GL_\ell$-equivariant Euler characteristics of vector bundles
on the flag variety.

They
can be defined by the following Demazure-operator formula.
Fix $\ell \in \ZZ_{\ge 0}$. 
A \emph{root ideal} is an upper order ideal
of the poset  $\Delta^+ = \Delta^+_\ell := \{(i,j) \mid 1 \le i < j \le \ell \big\}$
with partial order given by $(a,b) \leq (c,d)$ when $a\geq c$ and $b\leq d$.
An {\it indexed root ideal of length $\ell$} is
a pair $(\Psi, \gamma)$ consisting of a root ideal $\Psi\subset\Delta_\ell^+$ and a weight $\gamma\in\ZZ^\ell$.
Schur functions can be defined for any $\gamma\in\ZZ^\ell$~by
\begin{align}
\label{ed s gamma}
s_\gamma = s_\gamma(\mathbf{x}) = \det( h_{\gamma_i + j-i}(\mathbf{x}) )_{1 \le i, j \le \ell}
\ \in\, \Lambda,
\end{align}
where by convention $h_0(\mathbf{x}) = 1$  and $h_d(\mathbf{x}) = 0$ for  $d < 0$.

\begin{definition}
\label{d HH gamma Psi}
The \emph{Catalan function} associated to
an indexed root ideal $(\Psi, \gamma)$ of length~$\ell$ is
\vspace{-2.4mm}
\begin{align}
\label{e d HH gamma Psi}
H(\Psi;\gamma)(\mathbf{x};t) := \prod_{(i,j) \in \Psi} \big(1-tR_{i j}\big)^{-1} s_\gamma(\mathbf{x}) \ \in \, \Lambda,
\end{align}
where the raising operator $R_{i j}$ acts on the subscripts of the $s_\gamma$ by $R_{i j} s_\gamma = s_{\gamma + \epsilon_i - \epsilon_j}$ and
$\epsilon_i \in  \ZZ^\ell$ denotes the weight with a 1 in position  $i$ and 0's elsewhere;
for a discussion of raising operators, including a more precise version of \eqref{e d HH gamma Psi}, see \cite[\S4.3]{BMPS}.
By convention,
$H(\varnothing ; \emptyset) := 1$
when  $\ell = 0$,
where $\varnothing$ denotes the empty set and $\emptyset$ denotes the weight/partition of length 0.
\end{definition}


Twenty years ago, symmetric functions known as $k$-Schur functions were discovered in the study of the Macdonald positivity conjecture \cite{LLM} and were subsequently connected
to affine Schubert calculus \cite{LMktableau, LMquantum, LamSchubert}.
Many conjecturally equivalent candidates for $k\text{-Schur}$ functions have been proposed in the years since their discovery.
Among them is
the following subclass of Catalan functions, which we recently connected to other $k$-Schur candidates \cite{BMPS}, proving a conjecture of
Chen-Haiman \cite{ChenThesis} (see \S\ref{ss unifying}).

Fix $k \in \ZZ_{\ge 1}$ and $\ell \in \ZZ_{\ge 0}$.
Write $\Par^k_\ell = \{(\mu_1, \dots, \mu_\ell) \in \ZZ^\ell :  k \ge \mu_1 \ge \cdots \ge  \mu_\ell \ge 0 \}$
for the set of partitions contained in the $\ell \times k$-rectangle and
$\Par^k = \bigsqcup_{m \ge 0} \! \big\{(\mu_1,  \dots, \mu_m) \in \ZZ^m \mid $ $k \ge \mu_1 \ge \mu_2 \ge  \cdots \ge \mu_m > 0 \big\}$ for the set of  $k$-bounded partitions.
For any partition  $\mu$ (elements of  $\Par^k_\ell$ and  $\Par^k$ included), the \emph{length} $\ell(\mu)$ is
the number of nonzero parts of~$\mu$.

\begin{definition}
\label{d0 catalan kschur}
For  $\mu \in \Par^k_\ell$, define the root ideal
\begin{align}
\Delta^k(\mu) = \{(i,j) \in \Delta^+_\ell \mid  k-\mu_i + i < j\},
\end{align}
and the \emph{$k$-Schur Catalan function}
\begin{align}
\label{d catalan kschur}
&\fs^{(k)}_\mu(\mathbf{x};t) := H(\Delta^k(\mu);\mu)(\mathbf{x};t)=
\prod_{i=1}^\ell\,\prod_{j=k+1-\mu_i+i}^\ell \big(1-tR_{i j}\big)^{-1} s_\mu(\mathbf{x})
\,.
\end{align}
\end{definition}

\subsection{A $k$-Schur positivity conjecture}
\label{ss kSchur positivity conjecture}
A central theme in the investigations of Catalan functions
\cite{SW, ChenThesis,Panyushev}
can be articulated as Schur positivity conjectures.  We have discovered that a conjecture
addressing $k$-Schur positivity
encompasses
the earlier conjectures as well as questions
concerning Gromov-Witten invariants and Macdonald polynomials.

Let  $H_\mu(\mathbf x;t)$ denote the modified Hall-Littlewood polynomial indexed by the partition~$\mu$, which is equal to the Catalan function $H(\Delta^+;\mu)(\mathbf{x};t)$.

\begin{proposition}
The $k$-Schur Catalan functions $\{\fs^{(k)}_\mu\}_{\mu \in \Par^k}$ form a $\ZZ[t]$-basis for
\begin{align}
\Lambda^k :=
\Span_{\ZZ[t]} \! \big\{H_{\mu}(\mathbf{x};t) \mid \mu \in \Par^k \big\}\, \subset \Lambda.
\end{align}
\end{proposition}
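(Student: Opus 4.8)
The plan is to establish two facts: (i) the $k$-Schur Catalan functions $\fs^{(k)}_\mu$ for $\mu \in \Par^k$ all lie in $\Lambda^k$, and (ii) they are linearly independent over $\ZZ[t]$ in a strong enough sense that, together with a dimension/filtration count, they span. The natural tool for both is a suitable partial order on $\Par^k$ together with a triangularity statement relating $\fs^{(k)}_\mu$ to the modified Hall–Littlewood basis $\{H_\mu\}_{\mu \in \Par^k}$.

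First I would record that $\Lambda^k$ has a more familiar description. The span $\Span_{\ZZ[t]}\{H_\mu(\mathbf{x};t) \mid \mu \in \Par^k\}$ coincides with $\Span_{\ZZ[t]}\{h_\mu(\mathbf{x}) \mid \mu \in \Par^k\}$, since the transition matrix between $\{H_\mu\}$ and $\{h_\mu\}$ (over all partitions $\mu$, or restricted to those with all parts $\le k$) is unitriangular over $\ZZ[t]$ with respect to dominance order on partitions of each fixed size $n$ — this is classical for Hall–Littlewood polynomials, and the point is that $h$-to-$H$ triangularity preserves the property ``all parts $\le k$'' because adding dominance-smaller terms only splits parts into smaller pieces. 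Thus $\Lambda^k = \bigoplus_{n\ge 0}\Span_{\ZZ[t]}\{h_\mu : \mu \vdash n,\ \mu_1 \le k\}$, a free $\ZZ[t]$-module graded by degree, with rank in degree $n$ equal to the number of partitions of $n$ with largest part at most $k$.

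Next, for each fixed $n$, I would expand $\fs^{(k)}_\mu$ (for $\mu \in \Par^k$ with $|\mu| = n$) in the $h$-basis and show the resulting matrix is invertible over $\ZZ[t]$. From the Jacobi–Trudi-type determinant \eqref{ed s gamma} defining $s_\mu$ and the raising-operator product in \eqref{d catalan kschur}, every $\fs^{(k)}_\mu$ is manifestly a $\ZZ[t]$-combination of $h_\nu$'s; the content to check is that only $\nu$ with $\nu_1 \le k$ occur (so that $\fs^{(k)}_\mu \in \Lambda^k$) and that the leading term, in an appropriate order, is $h_\mu$ with coefficient $1$. The choice of root ideal $\Delta^k(\mu)$ is exactly engineered so that the raising operators applied to $s_\mu$ never push a part above $k$: the condition $j > k - \mu_i + i$ on $(i,j)\in\Delta^k(\mu)$ bounds how $R_{ij}$ can increase $\mu_i$. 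Making this precise — that after straightening the determinant and applying $\prod (1-tR_{ij})^{-1}$, all monomials $h_\nu$ that survive satisfy $\nu_1 \le k$, and that the $t^0$ part recovers $s_\mu$ whose $h$-expansion is dominance-unitriangular — is the crux. I expect the main obstacle to be controlling the raising-operator action carefully enough: raising operators do not act on symmetric functions directly but only formally on the determinant/polynomial-representative level, so one must either invoke the precise formalism of \cite[\S4.3]{BMPS} referenced in Definition~\ref{d HH gamma Psi}, or set up a combinatorial model (e.g.\ a tableau or lattice-path expansion of $\fs^{(k)}_\mu$) in which the bound $\nu_1 \le k$ is visibly preserved.

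Finally, I would assemble the pieces: in each degree $n$, the $\fs^{(k)}_\mu$ ($\mu\vdash n$, $\mu\in\Par^k$) number exactly $\dim_{\ZZ[t]}\Lambda^k_n$, they lie in $\Lambda^k_n$, and the transition matrix to the basis $\{h_\mu\}$ is triangular with $1$'s on the diagonal with respect to a refinement of dominance order (so in particular its determinant is a unit in $\ZZ[t]$). Hence $\{\fs^{(k)}_\mu\}_{\mu\in\Par^k}$ is a $\ZZ[t]$-basis of $\Lambda^k$, completing the proof. An alternative to the explicit $h$-expansion, if that route proves technical, is to expand directly in the $\{H_\nu : \nu\in\Par^k\}$ basis and cite or reprove a known straightening lemma for Catalan functions (of the type used in \cite{BMPS}) giving $\fs^{(k)}_\mu = H_\mu + \sum_{\nu \rhd \mu,\, \nu\in\Par^k}(\ast)\, H_\nu$; either way the combinatorial heart is the containment-in-$\Lambda^k$ claim, which is where I would concentrate the work.
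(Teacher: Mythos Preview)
Your alternative at the end---expanding $\fs^{(k)}_\mu$ in the $\{H_\nu\}_{\nu\in\Par^k}$ basis and invoking a unitriangular transition from \cite{BMPS}---is exactly what the paper does: it observes that $\Lambda^k$ is a free $\ZZ[t]$-module (via the unitriangular relations among $\{h_\mu\}$, $\{s_\mu\}$, $\{H_\mu\}$) and then cites the proof of \cite[Theorem~2.8]{BMPS} for the unitriangular change of basis between $\{\fs^{(k)}_\mu\}_{\mu\in\Par^k}$ and $\{H_\mu\}_{\mu\in\Par^k}$. So your proposal lands in the right place, just with more scaffolding than the paper needs.

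Your primary route via a direct $h$-expansion is workable in principle but contains a misstep you should be aware of. The sentence ``the condition $j > k - \mu_i + i$ on $(i,j)\in\Delta^k(\mu)$ bounds how $R_{ij}$ can increase $\mu_i$'' is backwards: $\Delta^k(\mu)$ is the set of roots that \emph{do} appear in the product $\prod(1-tR_{ij})^{-1}$, so these raising operators are applied arbitrarily many times and can push $\mu_i$ well above $k$ at the level of the intermediate weights $\gamma$. Containment in $\Lambda^k$ does not come from a direct bound on the weights produced by raising operators; it comes only after the straightening relations among the $s_\gamma$ are taken into account, and the clean way to see this is precisely the inductive Catalan-function argument carried out in \cite{BMPS} (and packaged in the present paper as Proposition~\ref{p style k space}). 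If you pursued the $h$-expansion route you would end up reproving that argument, so the paper's shortcut of citing it is the efficient move.
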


\begin{proof}
Note that  $\Lambda$ is a free $\ZZ[t]$-module with bases  $\{h_\mu\}$, $\{s_\mu\}$,
$\{H_\mu\}$  ($\mu$ ranges over partitions) since
 $\{h_\mu\}$ and $\{s_\mu\}$ are related by a unitriangular change of basis, as
are $\{H_\mu\}$ \linebreak[4] and $\{s_\mu\}$.
Thus  $\Lambda^k$ is a free  $\ZZ[t]$-module.
Since $\{\fs^{(k)}_\mu\}_{\mu \in \Par^k}$ and $\{H_\mu\}_{\mu \in \Par^k}$ are related by a unitriangular
change of basis by the proof of \cite[Theorem 2.8]{BMPS}, the result follows.
%
%
\end{proof}

Given an indexed root ideal $(\Psi, \mu)$ of length $\ell$, for each $i \in [\ell]$ define
\begin{align}
\nr(\Psi)_i &:= \big|\big\{(i,j) \in \Delta^+ \setminus \Psi : j \in [\ell] \big\}\big|,\\
\style(\Psi, \mu)_i &:= \nr(\Psi)_i + \mu_i.
\end{align}

\begin{proposition}
\label{p style k space}
If $(\Psi, \mu)$  is an indexed root ideal with
$\style(\Psi,\mu)_i \le k$ for all $i$,
then $H(\Psi; \mu) \in \Lambda^k$.
\end{proposition}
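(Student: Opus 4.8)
The plan is to prove Proposition \ref{p style k space} by induction on $\sum_i(\nr(\Psi)_i) = |\Delta^+_\ell\setminus\Psi|$, i.e.\ on how far $\Psi$ is from the full root ideal $\Delta^+_\ell$. The base case is $\Psi = \Delta^+_\ell$, where $\nr(\Psi)_i = 0$ for all $i$, so $\style(\Psi,\mu)_i = \mu_i \le k$; then $H(\Delta^+;\mu) = H_\mu$ is a modified Hall-Littlewood polynomial indexed by a $k$-bounded partition, hence lies in $\Lambda^k$ by definition (after discarding zero parts, which does not change the Catalan function). Actually, one should be a little careful: $\mu$ here is a weight in $\ZZ^\ell$, so I would first reduce to the case where $\mu$ is a genuine partition (weakly decreasing, nonnegative) — if $\mu$ is not dominant, $s_\mu$ is either $0$ or $\pm s_\nu$ for a partition $\nu$ by the straightening rule for Schur functions, and one checks the $\style$ hypothesis is compatible with this; if $\mu$ is dominant but not a partition it has a negative part, and then I'd argue $H(\Psi;\mu)$ vanishes or handle it directly.

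For the inductive step, pick a corner of $\Delta^+_\ell\setminus\Psi$ — precisely, a root $(a,b)\in\Delta^+_\ell\setminus\Psi$ that is maximal in the poset $\Delta^+$ among roots not in $\Psi$, so that $\Psi' := \Psi\cup\{(a,b)\}$ is again a root ideal (an upper order ideal). Here one should take $b$ as large as possible, so that $(a,b)$ is the ``last'' missing root in row $a$; then removing it from the complement changes only $\nr(\cdot)_a$, decreasing it by $1$. The key identity is the standard recursion for Catalan functions coming from factoring out one raising-operator factor: there is a relation of the shape
\begin{align}
H(\Psi';\mu) = H(\Psi;\mu) - t\, H(\Psi'; \mu + \epsilon_a - \epsilon_b),
\end{align}
or equivalently $H(\Psi;\mu) = (1-tR_{ab})H(\Psi';\mu)$ read in the appropriate ordering of the product of inverse raising factors (this is exactly the kind of manipulation justified by the precise treatment of raising operators in \cite[\S4.3]{BMPS}). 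Rearranged, $H(\Psi';\mu) = H(\Psi;\mu) + tR_{ab}H(\Psi';\mu)$ — wait, the signs must be tracked carefully; the point is that $H(\Psi';\mu)$ is expressed as a $\ZZ[t]$-combination of $H(\Psi;\mu)$ and $H$-functions attached to root ideals with strictly smaller complement, to which induction applies. So the real content is: (i) choose the corner root correctly; (ii) write the one-step recursion; (iii) verify that every Catalan function appearing on the right-hand side has all $\style$-statistics $\le k$, so the inductive hypothesis covers them.

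The main obstacle I expect is step (iii): checking that the $\style$ condition is preserved under the recursion. Passing from $\Psi'$ to $\Psi$ (removing the corner $(a,b)$) increases $\nr(\cdot)_a$ by $1$, which would push $\style(\Psi,\mu)_a = \nr(\Psi)_a + \mu_a$ up to $\nr(\Psi')_a + 1 + \mu_a$ — potentially exceeding $k$. The resolution must be that the term $H(\Psi;\mu)$ with the ``bad'' larger root ideal is the one we are trying to express (it sits on the left if we read the recursion the other way), while on the right we only ever encounter $\Psi'$ (with its smaller $\nr$) paired with shifted weights $\mu + \epsilon_a - \epsilon_b$: the shift decreases $\mu_b$, and crucially $\style(\Psi',\mu+\epsilon_a-\epsilon_b)_a = \nr(\Psi')_a + \mu_a + 1$, so one needs $\nr(\Psi')_a + \mu_a + 1 \le k$, i.e.\ $\nr(\Psi)_a + \mu_a \le k$, which is exactly the hypothesis $\style(\Psi,\mu)_a \le k$. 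Thus the bookkeeping works out precisely because we chose $(a,b)$ with $b$ maximal, making $(a,b)$ the unique root governing the relevant inequality. I would also need to confirm that iterated shifts $\mu + \epsilon_a - \epsilon_b$ keep the weight in a range where straightening $s_{\mu+\cdots}$ back to a partition (or to $0$) does not violate the $\style$ bound — this is the one genuinely fiddly verification, and I would organize it as a small lemma comparing $\style$-statistics before and after straightening, using that straightening only permutes and merges rows in a controlled way.
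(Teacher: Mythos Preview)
Your inductive reduction to $\Psi = \Delta^+$ is sound and is almost certainly what the argument cited from \cite{BMPS} does (the paper's own proof here is only a pointer to that reference). The one-step recursion you need is
\[
H(\Psi;\mu) \;=\; H(\Psi';\mu) - t\,H(\Psi';\mu+\epsilon_a-\epsilon_b)
\]
for $\Psi' = \Psi \cup \{(a,b)\}$; your displayed version has the two sides swapped, and the surrounding sentence about ``expressing $H(\Psi';\mu)$ in terms of $H(\Psi;\mu)$'' is the wrong way round, but your verification that both right-hand terms satisfy $\style \le k$ with respect to $\Psi'$ is correct. Any maximal root of $\Delta^+\setminus\Psi$ works; the ``$b$ largest'' refinement is unnecessary.

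The real gap is the base case. Schur straightening $s_\mu = \pm s_\nu$ does \emph{not} lift to $H(\Delta^+;\cdot)$: raising operators act on subscripts, not on the symmetric function $s_\mu$, so $H(\Delta^+;\mu) \ne \pm H(\Delta^+;\nu)$ in general---for instance $H(\Delta^+_2;(0,2)) = (t-1)s_{11}+t^2 s_2$, which is not $\pm H_{11}$. What the base case actually needs is that $\tilde{\bb}_\alpha \cdot 1 \in \Lambda^k$ whenever all $\alpha_i \le k$, equivalently that $\bb_m$ preserves $\Lambda^k$ for every $m \le k$. This is established via the quadratic commutation relation for the Jing operators (e.g.\ $\bb_m\bb_n = t\,\bb_n\bb_m + t\,\bb_{m+1}\bb_{n-1} - \bb_{n-1}\bb_{m+1}$), which rewrites $\tilde{\bb}_\alpha$ as a $\ZZ[t]$-combination of $\tilde{\bb}_\beta$ with $\beta$ weakly decreasing and, crucially, every entry of $\beta$ still $\le k$; then $\bb_m \cdot 1 = h_m = 0$ for $m<0$ kills terms with a negative last part, and the surviving $\tilde{\bb}_\beta \cdot 1 = H_\beta$ lie in $\Lambda^k$ by definition. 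Your instinct that a straightening lemma controls $\style$ is right, but it must be applied to the $\bb$-operators, not to $s_\mu$.
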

\begin{proof}
It was established in the proof of Theorem 2.8 on page 17 of \cite{BMPS}
that $H(\Psi; \mu) \in \Lambda^k$
when  $\mu \in \Par^k_\ell$ and $\Psi = \Delta^k(\mu)$,
but all that was used about $(\Psi, \mu)$ is $\style(\Psi,\mu)_i \le k$ for all $i$, so the proof applies in the present setting unchanged.
\end{proof}

\begin{conjecture}
\label{cj kschur pos}
If $(\Psi, \mu)$  is an indexed root ideal with $\mu \in \Par^k$ and
$\style(\Psi,\mu)_i \le k$ for all $i$,
then the Catalan function $H(\Psi;\mu)$ is $k$-Schur positive.
That is,
\begin{equation}
\label{eq:catalan_in_kschur}
H(\Psi;\mu)(\mathbf x;t)=\sum_{\lambda\in\Par^k} K^{\Psi,k}_{\lambda, \mu}(t)\,\fs^{(k)}_\lambda(\mathbf{x};t)
\quad \text{with} \ \, K^{\Psi,k}_{\lambda,\mu}(t)\in \NN[t]\,.
\end{equation}
\end{conjecture}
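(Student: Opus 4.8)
Since $H(\Psi;\mu)\in\Lambda^k$ by Proposition~\ref{p style k space} and $\{\fs^{(k)}_\lambda\}_{\lambda\in\Par^k}$ is a $\ZZ[t]$-basis of $\Lambda^k$, the expansion \eqref{eq:catalan_in_kschur} exists with $K^{\Psi,k}_{\lambda,\mu}(t)\in\ZZ[t]$; only the positivity is at issue. The natural engine is the pair of reductions available for Catalan functions: the \emph{remove-a-root recursion}
\[
H(\Psi;\gamma)=H\big(\Psi\setminus\{(a,b)\};\gamma\big)+t\,H(\Psi;\gamma+\epsilon_a-\epsilon_b),
\]
valid for $(a,b)$ a minimal element of $\Psi$ after the raising-operator bookkeeping of \cite[\S4.3]{BMPS}, together with Jacobi--Trudi straightening of $s_\gamma$ and the $k$-Schur straightening relations of \cite{BMPS} that rewrite $\fs^{(k)}_\gamma$ for a non-partition weight $\gamma$ in the basis $\{\fs^{(k)}_\lambda\}$. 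First I would try to show that from $(\Psi,\mu)$ with $\mu\in\Par^k$ and $\style(\Psi,\mu)_i\le k$ one can iterate these moves so that every weight that appears (after straightening) lies in $\Par^k_\ell$, every root ideal that appears keeps the style bound, and every intermediate coefficient stays in $\NN[t]$, terminating at the base case $\Psi=\Delta^k(\mu)$, where $H(\Psi;\mu)=\fs^{(k)}_\mu$ by Definition~\ref{d0 catalan kschur}.

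The hard part is that the term $H(\Psi;\gamma+\epsilon_a-\epsilon_b)$ has $\style(\Psi,\cdot)_a$ larger by one, so it can exit the region controlled by Proposition~\ref{p style k space}, and $\gamma+\epsilon_a-\epsilon_b$ need not be a partition. A working argument must therefore choose the removable root $(a,b)$ so that the subsequent straightening restores both conditions without cancellation; equivalently, it must produce a combinatorial model whose objects are in a weight- and $t$-preserving bijection with the terms generated by the recursion. Building such a master model for an \emph{arbitrary} root ideal $\Psi$ --- a common refinement of the strong marked tableaux of \cite{LLMSMemoirs1} carrying a statistic that generalizes spin and $k$-charge --- is the principal obstacle, and is why I do not expect the full conjecture to fall out of this scheme at once.

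I would instead first settle the three extremal families, where the recursion can be steered. \textbf{(1)} For $\Psi=\Delta^+$ one has $\nr(\Delta^+)=0$, the hypothesis reduces to $\mu\in\Par^k$, and $H(\Delta^+;\mu)=H_\mu$; positivity here is exactly the $q=0$ case of the strengthened Macdonald conjecture of \cite{LLM}, which I would prove by exhibiting $K^{\Delta^+,k}_{\lambda,\mu}(t)$ as a $t$-generating function over an explicit set of tableaux --- strong marked tableaux of the core shape attached to $\lambda$, or $k$-tabloids with a $k$-charge --- and checking that it satisfies the same remove-a-root and straightening recursions as $H_\mu$. \textbf{(2)} For $\lambda\in\Par^k$ and a partition $\nu$ whose parts appended to those of $\lambda$ still form a partition, a suitable root ideal $\Psi$ places $H(\Psi;\lambda\cdot\nu)$ in $\Lambda^k$ with $\style(\Psi,\lambda\cdot\nu)_i\le k$ for all $i$ and realizes (on specializing $t=1$) a product of a $k$-Schur and a Schur function relevant to $q\!H^*(\Fl_n)$; I would expand $H(\Psi;\lambda\cdot\nu)$ by iterating the strong Pieri rule of \cite{LLMSMemoirs1} one part of $\nu$ at a time, yielding a positive strong-tableau formula and hence positivity of the corresponding three-point genus-zero Gromov--Witten invariants. \textbf{(3)} For $k$-split polynomials, which are the Catalan functions attached to the $k$-split of a $k$-bounded partition and lie in $\Lambda^k$ by Proposition~\ref{p style k space}, I would run the remove-a-root recursion to reduce to case (1); organizing this reduction bijectively would at the same time prove that the $k$-Schur functions defined via $k$-split polynomials \cite{LMksplit} coincide with those defined via strong tableaux \cite{LLMSMemoirs1}.

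In each of (1)--(3) the technical heart is identical: showing that the chosen combinatorial generating function obeys the same remove-a-root and straightening recursions as the Catalan function, i.e.\ constructing an explicit weight- and $t$-preserving bijection that realizes these operator identities on tableaux --- a $k$-analogue of Lascoux--Sch\"utzenberger catabolism --- and verifying that it remains cancellation-free in the presence of the $k$-Schur straightening relations. That bijection is the main obstacle in the cases that are actually provable; passing to the full conjecture demands, in addition, the master model of the second paragraph, which I regard as the key open problem here.
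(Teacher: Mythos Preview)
This statement is a conjecture; the paper does not prove it in full either. You correctly recognize it as open and target the same three special cases the paper resolves: $\Psi=\Delta^+$ (modified Hall--Littlewood), $\Psi=\varnothing_r\uplus\Delta^k(\nu)$ with $\mu\nu$ a partition (Schur times $k$-Schur), and the $k$-split root ideals.

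Your route differs from the paper's in the choice of engine. You build everything on the remove-a-root recursion plus straightening, and your central concern is steering that recursion to avoid cancellation. The paper instead elevates the \emph{strong Pieri operators} $u_p$---defined combinatorially by strong marked covers but satisfying the Catalan identity $\fs^{(k)}_\mu\cdot u_p=H(\Delta^k(\mu);\mu-\epsilon_p)$---to the primary tool. Each of the three theorems is an identity of the form $H(\Psi;\mu)=\fs^{(k)}_{U}\cdot u_w$ (or a sum of such over tableau reading words), from which positivity is immediate by definition of $u_p$. The remove-a-root recursion still appears, but only inside ``downpath'' and ``mirror'' lemmas that expand a Catalan function along a bounce path and kill unwanted terms structurally, so no separate bijection realizing the recursion on tableaux is needed. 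Your case (1) sketch (build a tableau model and check it satisfies the recursion) is replaced by a short induction using $e_\ell^\perp=u_\ell\cdots u_1$ on $\Lambda^{k}$ together with commutation of $u_{p+1}$ past $\bb_k$.

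One structural correction: your case (3) reduction goes to case (1), but the paper reduces $k$-split polynomials to case (2). Since $G^{(k)}_\lambda=\bb_{\lambda^1}\cdots\bb_{\lambda^{d-1}}s_{\lambda^d}$, induction on $d$ peels off $\bb_{\lambda^1}$ acting on something already $k$-Schur positive, and that step is exactly Theorem~\ref{t schur kschur}. This is why the paper proves case (2) first, and in fact proves a stronger flagged version (Theorem~\ref{t schur kschur flag}) whose extra generality is what makes its own induction close.
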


We call the  $K^{\Psi,k}_{\lambda,\mu}(t)$ {\it $k$-Catalan-Kostka coefficients}.



\begin{remark}
\label{r kschur conjecture}
The branching property \cite[Theorem 2.6]{BMPS} --- $\fs_\nu^{(k)}$ is  $k+1$-Schur positive
for all  $\nu \in \Par^k$ --- tells us that
 for fixed  $(\Psi, \mu)$ the conjectured  $k$-Schur positivity of $H(\Psi ; \mu)$
is strongest for $k = \max_i\{\, \style(\Psi, \mu)_i\, \}$.  Also,  $k$-Schur positivity implies
Schur positivity  since a $k$-Schur function reduces to a Schur function for large  $k$.
\end{remark}

Conjecture~\ref{cj kschur pos} is a broad conjecture which encompasses several open positivity problems
in algebraic combinatorics.
We elaborate below and preview our
main results which resolve three natural
special cases of this conjecture.

\textbf{Strengthened Macdonald positivity.}
Lapointe, Lascoux, and Morse \cite{LLM}
constructed a
family of symmetric functions---now one of many conjecturally equivalent  $k$-Schur
candidates---and conjectured
 (i) they form a basis for $\Lambda^k$,
(ii) they are Schur positive, and
(iii) for  $\mu \in \Par^k$, the expansion of the modified Macdonald polynomial $H_\mu(\mathbf {x};q,t)$ in this basis has coefficients
in $\NN[q,t]$.
This factors the problem of Schur
expanding Macdonald polynomials 
into the two positivity problems (ii) and (iii).
In \cite{BMPS}, we established that
 $\{\fs^{(k)}_\mu\}_{\mu \in \Par^k}$ is a Schur positive basis for $\Lambda^k$, i.e.,  it satisfies (i)--(ii).
Here we resolve the  $q=0$ specialization of (iii) in the strongest possible terms by
giving  a positive combinatorial formula for the $k$-Schur expansion of
$H_\mu(\mathbf{x}; 0,t) = H_\mu(\mathbf {x};t)$ for all $\mu \in \Par^k$ (Theorem~\ref{t HL to k schur intro});
since $H_\mu(\mathbf{x}; 0,t) = H(\Delta^+; \mu)$, this resolves Conjecture \ref{cj kschur pos}
in the case $\Psi=\Delta^+$.

\textbf{Gromov-Witten invariants.}
The $k$-Schur expansion coefficients in products of two \linebreak[3] $k$-Schur functions at $t=1$
agree (see Section \ref{s gromov witten})
with the 3-point Gromov-Witten invariants of genus 0 which, informally, count equivalence classes of
certain rational curves in the flag variety $\Fl_{k+1}$.
They are structure constants of the quantum cohomology ring of $\Fl_{k+1}$ and specialize to
Schubert polynomial structure constants.  It turns out that  the product of $k$-Schur functions
is realizable as a single Catalan function, as we now explain.

Define a binary operation  $\uplus$ on root ideals as follows: for  $\Psi \subset \Delta^+_r$ and $\Phi \subset \Delta^+_m$,
the root ideal $\Psi \uplus \Phi \subset \Delta^+_{r+m}$ is the result of placing $\Psi$ and  $\Phi$ catty-corner
and including the full  $r \times m$ rectangle of roots
in between;  equivalently, $\Psi\uplus\Phi$ is determined by
\begin{align}
\label{ed root ideal uplus}
\Delta_{r+m}^+ \setminus (\Psi\uplus\Phi) = (\Delta^+_r \setminus \Psi) \sqcup \big\{(i+r,j+r): (i,j)\in \Delta^+_m \setminus \Phi\big\}.
\end{align}

Let $\mu\in\Par^k_r$, $\nu \in \Par^k_m$ and
$\mu\nu = (\mu, \nu)$ denote the concatenation of  $\mu$ and  $\nu$.
We have $H(\Delta^k(\mu)\uplus\Delta^k(\nu) ; \mu\nu)(\mathbf x;1)=
\fs_\mu^{(k)}(\mathbf x;1) \fs_\nu^{(k)}(\mathbf x;1)$ and thus
its $k$-Schur expansion coefficients are the (nonnegative)
Gromov-Witten invariants.
Conjecture \ref{cj kschur pos} predicts a stronger result in the more restricted setting
when $\mu \nu$ is a partition:
the  $k$-Schur expansion coefficients of $H(\Delta^k(\mu)\uplus\Delta^k(\nu); \mu\nu)(\mathbf x;t)$
lie in  $\NN[t]$.
We resolve this with a positive combinatorial formula in the case
$\fs_\mu^{(k)}$ is a Schur function (Theorem~\ref{t schur kschur})
and deduce a tableau enumeration formula for a new class of Gromov-Witten invariants (Theorem \ref{cor:gw2schurtimeskschur}).
Positive formulas for Gromov-Witten invariants 
have been obtained in many special cases
\cite{BergBergeronThomasZabrocki, BergSaliolaSerranonearrectangle,BergSaliolaSerranoexpansions,  BKPT, CiocanFontanine,Coskun,FGP, MPP,MorseSchilling,Postnikovquantumversion, Tudose}.
Our formula is one of the few for which all three input permutations are allowed to vary
among $\Omega(2^k)$ many possibilities, in contrast to, say, the quantum Monk or Pieri formula in which one of the permutations has a very special form.


\textbf{Schur positivity and parabolic Hall-Littlewood polynomials.}
Conjecture~\ref{cj kschur pos} strengthens the following conjecture of Chen-Haiman \cite{ChenThesis} (by Remark \ref{r kschur conjecture}):

\begin{conjecture}
\label{con:Catalan2Schur}
The Catalan function $H(\Psi;\mu)$ is Schur positive for
any root ideal $\Psi$ and partition $\mu$.
\end{conjecture}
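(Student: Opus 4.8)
The plan is to prove the conjecture geometrically, by realizing $H(\Psi;\mu)$ as an Euler characteristic of a line bundle on a homogeneous vector bundle over the flag variety and then showing its higher cohomology vanishes; vanishing upgrades the virtual $GL_\ell$-character $H(\Psi;\mu)$ to the character of an honest $\NN$-graded $GL_\ell$-module, which is automatically a nonnegative $\NN[t]$-combination of Schur polynomials. Concretely (following Panyushev and Chen--Haiman): let $G=GL_\ell$ with Borel $B$, and to the root ideal $\Psi\subseteq\Delta^+_\ell$ attach the $B$-submodule $\mathfrak{u}_\Psi\subseteq\mathfrak{g}$ whose weights are $\{-(\epsilon_i-\epsilon_j):(i,j)\in\Psi\}$; the hypothesis that $\Psi$ is an upper order ideal of $\Delta^+_\ell$ is exactly what makes $\mathfrak{u}_\Psi$ $B$-stable. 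Form $Z_\Psi:=G\times_B\mathfrak{u}_\Psi$, a $G$-stable closed subvariety of (a suitable twist of) the cotangent bundle $T^*(G/B)$, with projection $\pi\colon Z_\Psi\to G/B$, and for $\gamma\in\ZZ^\ell$ let $\mathcal{L}_\gamma$ be the line bundle on $G/B$ whose Euler characteristic has character $s_\gamma$ in the Jacobi--Trudi normalization \eqref{ed s gamma}. The first step is the identity
\[
H(\Psi;\mu)(\mathbf{x};t)=\sum_{i\ge 0}(-1)^i\,\chr H^i\big(Z_\Psi,\pi^*\mathcal{L}_\mu\big),
\]
where $t$ records the grading on $Z_\Psi$ from scaling the fibers of $\pi$: one has $R\pi_*\O_{Z_\Psi}=\bigoplus_{d\ge 0}t^d\,S^d(\mathfrak{u}_\Psi^\vee)$ as a graded bundle on $G/B$, the $B$-module $\mathfrak{u}_\Psi^\vee$ is filtered with one-dimensional quotients of weights $\epsilon_i-\epsilon_j$ for $(i,j)\in\Psi$, Euler characteristic is additive along such filtrations, and $\sum_i(-1)^i\chr H^i\big(G/B,\mathcal{L}_{\mu+\sum_{(i,j)}m_{ij}(\epsilon_i-\epsilon_j)}\big)=\prod_{(i,j)}R_{ij}^{m_{ij}}s_\mu$; summing over multidegrees $(m_{ij})$ yields $\prod_{(i,j)\in\Psi}(1-tR_{ij})^{-1}s_\mu=H(\Psi;\mu)$. (For $\Psi=\varnothing$ this is the Weyl character formula; for $\Psi=\Delta^+$ it is the Hotta--Springer description of Hall--Littlewood polynomials.)

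The crux is then a cohomology vanishing theorem: $H^i(Z_\Psi,\pi^*\mathcal{L}_\mu)=0$ for all $i>0$, every root ideal $\Psi$, and every partition $\mu$. Granting this, $\Gamma(Z_\Psi,\pi^*\mathcal{L}_\mu)$ is a genuine $\NN$-graded $G$-module, finite-dimensional in each degree, whose character is $H(\Psi;\mu)$, and the conjecture follows --- indeed with all Schur coefficients in $\NN[t]$. Such vanishing is classical when $\Psi$ is the set of roots in the nilradical of a parabolic (Broer), which recovers the parabolic Hall--Littlewood case, and Panyushev proved further special cases; the open content is a uniform statement for an arbitrary upper order ideal $\Psi$. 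The natural attack is induction on $|\Delta^+_\ell\setminus\Psi|$: when $\Psi\subseteq\Psi'$ differ by a single root, $Z_\Psi$ is the zero scheme of a section of a line bundle on $Z_{\Psi'}$, so a Koszul resolution reduces vanishing for $\Psi$ to vanishing for $\Psi'$ twisted by that line bundle, which one arranges to keep within reach of the inductive hypothesis; the base cases $\Psi=\Delta^+_\ell$ and $\Psi=\varnothing$ are classical. Equivalently one may peel roots off the bottom-left corner of $\Psi$, mirroring the Catalan-function recursions of \cite{BMPS}.

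The main obstacle is precisely this vanishing. Even with $\mu$ dominant, the twist by $S^d(\mathfrak{u}_\Psi^\vee)$ pushes weights out of the dominant cone, so vanishing on $Z_\Psi$ does not reduce to vanishing on $G/B$; and for a general root ideal $Z_\Psi$ can be badly singular (non-normal, irrational singularities), so the vanishing is delicate and may have to be run on a resolution, or may genuinely fail in some cases --- in which event Schur positivity of the Euler characteristic must instead hold for the subtler reason of cancellation among the $H^i$, forcing a spectral-sequence analysis or a purely combinatorial argument. The combinatorial route is the one taken in this paper for the special cases $\Psi=\Delta^+$ (charge on column-strict tableaux) and $\fs^{(k)}_\mu$ a Schur function (strong/$k$-split tableaux), and a uniform resolution of Conjecture~\ref{con:Catalan2Schur} --- equivalently, a positive rule for the $k$-Catalan--Kostka coefficients $K^{\Psi,k}_{\lambda,\mu}(t)$ of Conjecture~\ref{cj kschur pos} --- would most plausibly come from a crystal structure or a charge-type statistic on a combinatorial model underlying a monomial or Gelfand--Tsetlin expansion of $H(\Psi;\mu)$ refining those formulas; producing such a model in full generality is the essential difficulty.
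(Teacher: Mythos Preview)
This statement is labeled a \emph{conjecture} in the paper, and the paper does not prove it. It is presented as an open problem (attributed to Chen--Haiman, with the parabolic special case going back to Broer and Shimozono--Weyman), and the paper's contributions are to resolve the strictly stronger $k$-Schur positivity Conjecture~\ref{cj kschur pos} in several special cases (Theorems~\ref{t HL to k schur intro}, \ref{t schur kschur}, \ref{t k split to k schur}), not to settle Conjecture~\ref{con:Catalan2Schur} in general. So there is no ``paper's own proof'' to compare against.

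Your writeup is not a proof either, and to your credit you say so: you correctly set up the geometric interpretation of $H(\Psi;\mu)$ as a graded Euler characteristic on $Z_\Psi=G\times_B\mathfrak{u}_\Psi$, note that the desired Schur positivity would follow from higher cohomology vanishing, and then identify that vanishing as exactly the open obstacle. That is an accurate summary of the state of affairs --- indeed the paper remarks that Broer conjectured the stronger vanishing statement in the parabolic case (see the paragraph following Conjecture~\ref{con:Catalan2Schur}) --- but it is not a proof. The inductive Koszul argument you sketch (peeling one root at a time) is appealing but, as you note, the twist can push you outside the dominant cone and $Z_\Psi$ is generally too singular for naive vanishing theorems; no one has made this work for arbitrary $\Psi$. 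Likewise your combinatorial suggestion (a charge-type statistic or crystal on a model for $H(\Psi;\mu)$) is a reasonable program, and is in the spirit of the Shimozono--Weyman and Chen--Haiman katabolism conjectures mentioned in the paper, but remains open. In short: your proposal is a fair roadmap of known attacks and their difficulties, not a resolution, and the paper makes no greater claim.
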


Broer and Shimozono-Weyman \cite{BroerNormality,SW} had earlier studied Conjecture~\ref{con:Catalan2Schur}
for {\it parabolic Hall-Littlewood polynomials}, those Catalan functions of the form
\begin{equation}
\label{parabolic}
H(\varnothing_{r_1}\uplus\cdots\uplus\varnothing_{r_d};\mu)\,
\end{equation}
with $r_1, \dots, r_d \in \ZZ_{\ge 1}$ and partition  $\mu$, where  $\varnothing_r \subset \Delta^+_r$ denotes
the empty root ideal of length  $r$.
Broer posed Conjecture \ref{con:Catalan2Schur} for this class of Catalan functions, or rather the stronger conjecture that
the higher cohomology of an associated vector bundle on the flag variety vanishes (see \cite[Conjecture~5]{SW}).
Shimozono-Weyman conjectured that the Schur expansion coefficients
of parabolic Hall-Littlewood polynomials can be described by a combinatorial procedure called katabolism~\cite{SW};
this conjecture was later generalized by Chen-Haiman to address any Catalan function with partition weight
\cite[Conjecture~5.4.3]{ChenThesis}.

Conjecture~\ref{cj kschur pos} predicts that the parabolic Hall-Littlewood polynomials
are not only Schur positive
but in fact $k$-Schur positive for  $k = \max\{(\mu^i)_1 + r_i -1 \mid i \in [d]\}$,
where  $\mu = (\mu^1, \dots, \mu^d)$ is the decomposition of $\mu$
into sequences of lengths $r_1, \dots, r_d$.
We settle this conjecture
for a subclass of parabolic Hall-Littlewood polynomials studied in \cite{LMksplit, LMirred}
called  $k$-split polynomials (Theorem \ref{t k split to k schur}).


\subsection{Unifying the $k$-Schur candidates}
\label{ss unifying}

The desired properties (i)--(iii) of a $k$-Schur basis were never simultaneously
satisfied by any one proposed candidate.  For this reason, the unification of the various definitions
has been an important open problem.
Table~\ref{table} summarizes
the current state of the art in this regard.

Circled entries follow from our
recent work in \cite{BMPS}, Theorem \ref{c three defs agree},  and Theorem \ref{t HL to k schur intro}.
The  $k$-Schur candidate $\{\tilde{A}^{(k)}_\lambda\}$
was defined via  $k$-split polynomials  in \cite{LMksplit, LMirred} (recalled in \S\ref{ss kschur via ksplit});
they were shown to form a basis for  $\Lambda^k$ in \cite{LMksplit}
and satisfy the $k$-rectangle property in \cite{LMirred}.
The candidate $\{s^{(k)}_\lambda\}$ was proposed in
\cite[\S9.3]{LLMSMemoirs1} (see also~\cite{AssafBilley,LLMSMemoirs2}) and is defined
as a sum of monomials over strong tableaux, certain chains in the strong (Bruhat) order of $\eS_{k+1}$.
The candidates on the last two rows are
  only defined at $t=1$
(and the checks indicate properties at $t=1$).
For these candidates, Schur positivity  was proven geometrically in \cite{LamBulletin}
and combinatorially in~\cite{LLMSMemoirs1,LLMSMemoirs2};
the  $k$-rectangle property was proven in \cite{LMktableau}.
The remaining checkmarks follow directly from the definitions of the respective candidates and the equivalence of the last two candidates \cite{LamSchubert}.

\begin{table}
\centerfloat
\vspace{1mm}
{\small
\begin{tabular}{lc|cccc}
\quad \qquad \qquad $k$-Schur candidate
& & \parbox{1.2cm}{basis \\ of  $\Lambda^k$} \!\!\!\!\!\!\! & \parbox{1.4cm}{Schur \\ positive} \!\!\!\!\! &
\parbox{3cm}{the $H_\mu(\mathbf x;q,t)$  are \\  $k$-Schur positive} \!\!\! & \parbox{2cm}{$k$-rectangle \\ property }  \!\!\!\!\!\! \\[2.7mm]
\hline &&&&& \\[-3.5mm]
\!\!\! (1998) Young tableaux and katabolism  \cite{LLM}  \!\!\!\!\!\!\!\!\!\!\!\!
& & & \checkmark &  &\\[1.5mm]
\hline &&&&& \\[-3.5mm]
\!\!\! (2001) {\small $k$-split polynomials \cite{LMksplit,LMirred}}
& $\tilde{A}^{(k)}_\lambda$
 & \checkmark  & \encircle{\checkmark} & \encircle{\checkmark} \ ($q=0$) & \checkmark \\[1.5mm]
\hline &&&&& \\[-3.5mm]
\!\!\! (2006) {\small Strong tableaux \cite{LLMSMemoirs1}}
& $s^{(k)}_\lambda$
& \encircle{\checkmark} &  \encircle{\checkmark}  &  \encircle{\checkmark}  \ ($q=0$) & \encircle{\checkmark} \\[1.5mm]
\hline &&&&& \\[-3.5mm]
\!\!\! (2008) {\small Catalan functions}
{\small  \cite{ChenThesis}}
& $\fs^{(k)}_\lambda$
& \encircle{\checkmark} & \encircle{\checkmark} & \encircle{\checkmark} \ ($q=0$)
& \encircle{\checkmark}
\\[1.5mm]
\hline &&&&& \\[-3.5mm]
\!\!\! (2014) Affine Kostka matrix
\cite{DalalMorse,LapointePinto}  \!\!\!\!\!\!
&
& \checkmark & & \checkmark \ ($q=0$) &  \\[1.5mm]
 \hline &&&&& \\[-3.5mm]
\!\!\! (2004) Weak tableaux  \cite{LMktableau}  ($t=1$)\!\!\!
& $\sone^{(k)}_\lambda$
& \checkmark & \checkmark & \checkmark \ ($q=0$) & \checkmark \\[1.5mm]
\hline &&&&& \\[-2.8mm]
\!\!\! (2005) Schubert classes in $H_*(\Gr)$  \cite{LamSchubert}
  ($t=1$)\!\!\!\!\!\!\!\!\!\!\!\!\!\!\!\!\!\!\!\!\!\!\!\!\!\!\!\!\!\!
&
& \checkmark & \checkmark & \checkmark \ ($q=0$) & \checkmark \\
\end{tabular}
}
\vspace{4mm}
\caption{\label{table}
\small Conjecturally equivalent definitions of  $k$-Schur functions and known properties.}
\vspace{-1mm}
\end{table}

\begin{theorem}
\label{c three defs agree}
The $k$-Schur functions defined via $k$-split polynomials \cite{LMksplit, LMirred},
$k$-Schur Catalan functions, 
and strong tableau $k$-Schur functions \cite[\S9.3]{LLMSMemoirs1}  coincide:
\begin{align}
\label{ec three defs agree}
\tilde{A}^{(k)}_\mu({\bf x};t) = \fs^{(k)}_\mu({\bf x};t)  =s^{(k)}_\mu({\bf x};t) \quad \text{ for all $\mu \in \Par^k$}.
\end{align}
Moreover, their  $t=1$ specializations $\{s^{(k)}_\mu({\bf x};1)\}$
match a definition  $\{\sone^{(k)}_\mu(\mathbf{x})\}$
using weak tableaux
from \cite{LMktableau},  and represent Schubert classes in
the homology of the
affine Grassmannian $\Gr_{G}$ of  $G=SL_{k+1}$.
\end{theorem}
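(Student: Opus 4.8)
The plan is to establish the two equalities $\tilde{A}^{(k)}_\mu=\fs^{(k)}_\mu$ and $\fs^{(k)}_\mu=s^{(k)}_\mu$ and then deduce the $t=1$ assertions from the literature. These two equalities are of quite different character: the first reduces, via the $k$-Schur expansion of $k$-split polynomials, to a triangular change-of-basis argument, whereas the second requires reconciling the raising-operator description of Catalan functions with the strong-order combinatorics underlying the $s^{(k)}_\mu$, and this is the heart of the matter.

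\emph{Step 1: $\tilde{A}^{(k)}_\mu=\fs^{(k)}_\mu$.} The $k$-split polynomial is by construction a parabolic Hall-Littlewood polynomial $G^{(k)}_\mu=H(\Psi_\mu;\mu)$, where $\Psi_\mu=\varnothing_{r_1}\uplus\cdots\uplus\varnothing_{r_d}$ is read off from the $k$-split of $\mu$ and satisfies $\style(\Psi_\mu,\mu)_i\le k$ for all $i$. Hence Theorem~\ref{t k split to k schur} (the $k$-split case of Conjecture~\ref{cj kschur pos}) gives an expansion $G^{(k)}_\mu=\sum_{\lambda\in\Par^k}K_{\lambda\mu}(t)\,\fs^{(k)}_\lambda$ with $K_{\lambda\mu}(t)\in\NN[t]$. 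I would then check that this transition matrix is upper-unitriangular for the dominance order --- $K_{\mu\mu}(t)=1$ and $K_{\lambda\mu}(t)=0$ unless $\lambda\trianglerighteq\mu$ --- which follows by combining the dominance-triangular Schur expansions of $G^{(k)}_\mu$ \cite{LMksplit} and of the $\fs^{(k)}_\lambda$ \cite{BMPS}, and is in any case recorded alongside Theorem~\ref{t k split to k schur}. Since the $k$-split polynomials determine the basis $\{\tilde{A}^{(k)}_\mu\}$ of $\Lambda^k$ through exactly such a dominance-unitriangular, Schur-normalized system \cite{LMksplit, LMirred}, matching the two expansions forces $\tilde{A}^{(k)}_\mu=\fs^{(k)}_\mu$; the $k$-rectangle and branching properties of the $\fs^{(k)}_\mu$ \cite{BMPS} provide any extra rigidity needed to fix the normalization.

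\emph{Step 2: $\fs^{(k)}_\mu=s^{(k)}_\mu$.} This is the main obstacle. The strong-tableau $k$-Schur function $s^{(k)}_\mu(\mathbf{x};t)=\sum_T x^{\wt(T)}t^{\spin(T)}$ of \cite{LLMSMemoirs1}, summed over strong tableaux on the $(k+1)$-core associated to $\mu$, is pinned down by the strong Pieri rule --- multiplication by $h_r$ expands over chains of $r$ marked strong covers in $\eS_{k+1}$, weighted by the change in spin --- together with $s^{(k)}_\emptyset=1$ and induction on degree. My plan is to prove this same Pieri rule for the Catalan functions: expand $h_r\,\fs^{(k)}_\mu$ via the raising-operator calculus of \cite{BMPS} into a signed sum of Catalan functions $H(\Psi;\nu)$, re-expand each of these through the $k$-Catalan-Kostka coefficients into the $\fs^{(k)}$-basis, and identify the surviving terms and their powers of $t$ with marked strong covers and spins. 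The difficult points will be (a) that no direct dictionary relates strong order on $\eS_{k+1}$ to the raising-operator presentation, so the requisite combinatorics has to be developed essentially from scratch, and (b) calibrating the spin statistic against the powers of $t$ produced by $\prod(1-tR_{ij})^{-1}$; for (b) I expect to pass through the $t=1$ specialization --- where $\fs^{(k)}_\mu(\mathbf{x};1)$ is identified with the weak-tableau $k$-Schur function in \cite{BMPS} and strong and weak tableaux are related in \cite{LLMSMemoirs2} --- to anchor the statistic, and then track spin through the Pieri recursion to recover the full $t$-dependence. One could equally prove the headline equality $\tilde{A}^{(k)}_\mu=s^{(k)}_\mu$ along this route and combine it with Step~1; the substance is the same.

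\emph{Step 3: the $t=1$ specialization.} By \cite{BMPS}, $\fs^{(k)}_\mu(\mathbf{x};1)$ equals the weak-tableau $k$-Schur function $\sone^{(k)}_\mu(\mathbf{x})$ of \cite{LMktableau}, which by \cite{LamSchubert} (see also \cite{LMktableau}) represents the Schubert class indexed by $\mu$ in the homology of the affine Grassmannian $\Gr_{G}$ of $G=SL_{k+1}$. Combining this with $\fs^{(k)}_\mu=s^{(k)}_\mu$ from Step~2 gives $s^{(k)}_\mu(\mathbf{x};1)=\sone^{(k)}_\mu(\mathbf{x})$ together with the geometric interpretation, so this last step merely assembles the cited facts.
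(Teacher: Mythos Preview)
Your Step 2 misidentifies the hard part. The equality $\fs^{(k)}_\mu = s^{(k)}_\mu$ is not proved in this paper at all: it is \cite[Theorem~2.4]{BMPS}, and the paper simply cites it. Your elaborate plan to establish a strong Pieri rule for Catalan functions is essentially a sketch of what \cite{BMPS} already did; here it is input, not output. So the ``main obstacle'' you describe is not an obstacle in the present proof.

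The genuinely new content is Step 1, and there your approach has a gap. You describe $\{\tilde{A}^{(k)}_\mu\}$ as being ``determined through a dominance-unitriangular, Schur-normalized system'' relative to the $G^{(k)}_\mu$, and then match triangularities. But $\tilde{A}^{(k)}_\mu$ is \emph{not} defined that way: it is defined recursively by \eqref{ed k split}, namely $\tilde{A}^{(k)}_\mu = \pi^{k,\mu_1}\big(\bb_{\mu_1}\,\tilde{A}^{(k)}_{\hat\mu}\big)$, where $\pi^{k,d}$ is the projection killing $\tilde{\Omega}^{k,a}$ for $a>d$. Triangularity of $G^{(k)}_\mu$ in the $\fs^{(k)}$-basis does not by itself force $\fs^{(k)}_\mu$ to satisfy this recursion. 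The paper's argument (Theorem~\ref{t k split eq catalan}) instead inducts on $\ell(\mu)$: Corollary~\ref{c Br on k schur} gives $\bb_{\mu_1}\,\fs^{(k)}_{\hat\mu} = \fs^{(k)}_\mu + \sum_{\nu_1>\mu_1} c_\nu\,\fs^{(k)}_\nu$, and then the key step is to show that $\pi^{k,\mu_1}$ kills the extra terms. This requires the equality $\tilde{\Omega}^{k,a} = \Omega^{k,a}$ of the $G^{(k)}$-span and the $\fs^{(k)}$-span at each first-part value $a$, which is extracted from the \emph{strengthened} claim in the proof of Theorem~\ref{t k split to k schur} (that $G^{(k)}_\lambda \in \Omega^{k,\lambda_1}$). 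Your outline does not supply this identification of spans, and without it the projection $\pi^{k,\mu_1}$ cannot be evaluated on the $\fs^{(k)}$-expansion.

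Your Step 3 is fine in spirit; the paper cites \cite[Theorem~4.11]{LLMSMemoirs1} for $s^{(k)}_\mu(\mathbf{x};1)=\sone^{(k)}_\mu$ and \cite[Theorem~7.1]{LamSchubert} for the Schubert-class identification, which is equivalent to what you wrote once Step~2 is in hand.
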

\begin{proof}
The first equality of \eqref{ec three defs agree} is proved in Section \ref{s ksplit proofs} as a consequence of the abovementioned Theorem \ref{t k split to k schur},
while the second was established in \cite[Theorem 2.4]{BMPS}.
The $\{s^{(k)}_\mu({\bf x};1)\}$ agree with the weak tableau  $k$-Schur functions by
\cite[Theorem 4.11]{LLMSMemoirs1} and with affine Grassmannian Schubert classes by \cite[Theorem 7.1]{LamSchubert}.
\end{proof}

\section{Strong Pieri operators}

The strong Pieri operators were introduced in~\cite{BMPS} and played a peripheral role there.
We have since discovered they are key to establishing elegant
formulas for  $k$-Schur expansions of Catalan functions.
The operators are defined combinatorially using strong marked
covers and have a  simple description in terms of Catalan functions.

For most of the background below, we follow \cite{BMPS}.  Strong marked covers were introduced in
\cite{LLMSMemoirs1} and our version below differs only in that markings are by rows rather than
diagonals; this difference, though seemingly minor,  has turned out to be quite important.
For examples of the subsequent definitions, see \cite[\S2.2]{BMPS}
and Examples \ref{ex k Schur to Schur} and \ref{ex schur times kschur}.

%

\subsection{Strong tableaux}
\label{ss Strong marked tableaux}

The \emph{diagram} of a partition $\lambda$ is the subset of boxes
$\{(r,c) \in \ZZ_{\geq 1} \times \ZZ_{\geq 1} \mid c \le \lambda_r\}$
in the plane, drawn in English (matrix-style) notation so that rows (resp. columns)
are increasing from north to south (resp. west to east).
Each box has a \emph{hook length} which counts the
number of boxes below it in its column and weakly to its right in its row.
A {\it $k+1$-core} is a partition with no box of hook length $k+1$.
There is a bijection $\p$ \cite{LMtaboncores}
from the set of $k+1$-cores to $\Par^k$
mapping a $k+1$-core $\kappa$ to the partition  $\lambda$ whose $r$-th row  $\lambda_r$ is
the number of boxes in the  $r$-th row of $\kappa$ having hook length $\le k$.

A \emph{strong cover} $\tau \Rightarrow \kappa$ is a pair of $k+1$-cores such that $\tau \subset \kappa$ and $|\p(\tau)| + 1 = |\p(\kappa)|$.
A \emph{strong marked cover} $\tau \xRightarrow{~~r~~} \kappa$ is a strong cover  $\tau \Rightarrow \kappa$
together with a positive integer~$r$
which is allowed to be the smallest row index
 of any connected component of the skew shape $\kappa/\tau$.
Let $w = w_1 \cdots w_m$ be a word in the alphabet of positive integers.
 A \emph{strong tableau  $T$ marked by $w$}
is a sequence of strong marked covers of the form
\[\kappa^{(0)} \xRightarrow{~~w_m~~} \kappa^{(1)} \xRightarrow{~~w_{m-1}~~} \cdots  \xRightarrow{~~w_1~~} \kappa^{(m)}.\]
We write $\inside(T) = \p(\kappa^{(0)})$ and $\outside(T) = \p(\kappa^{(m)})$.
The set of strong tableaux marked by  $w$ with $\outside(T) = \mu$ is denoted $\SMT^k(w \, ; \mu)$.

The \emph{spin} of a strong marked cover $\tau \xRightarrow{~~r~~} \kappa$ is defined to be $c\cdot (h-1) + N$, where $c$ is the number of connected components of  the skew shape $\kappa/\tau$, $h$ is the height (number of rows) of each component, and $N$ is the number of components entirely contained in rows $>r$.
For  a strong tableau  $T$ marked by a word, $\spin(T)$ is defined to be the sum of the spins of the strong marked covers comprising $T$.

\subsection{Strong Pieri operators}
\label{ss strong Pieri ops}

Fix a positive integer $k$.
The \emph{strong Pieri operators} $u_1, u_2, \dots \in \End_{\ZZ[t]}(\Lambda^k)$
are defined by their action on the basis  $\{\fs^{(k)}_\mu\}_{\mu \in \Par^k}$ as follows:
\begin{align}
\label{e strong Pieri operator}
\fs^{(k)}_\mu \cdot u_p =
\sum_{T \in \SMT^k(p \, ; \mu)} t^{\spin(T)}\fs^{(k)}_{\inside(T)}\, .
\end{align}
We have found it more natural to define these as right operators for compatibility with conventions for tableau reading words (see Theorem~\ref{t schur kschur}).
The set  $\SMT^k(p \, ; \mu)$ in the sum is just another notation for
the set of strong marked covers  $\tau \xRightarrow{~~p~~} \kappa$ with  $\p(\kappa) = \mu$.

By \cite[Theorem 9.2]{BMPS}, these operators are simply described in terms of Catalan functions:
for any $\mu \in \Par^k_\ell$ and  $p \in [\ell]$,
\begin{align}
\label{ec flm k schur}
\fs^{(k)}_\mu \cdot u_p &=  H(\Delta^k(\mu) ; \mu-\epsilon_p).
\end{align}

Let  $e_d^\perp$ be the linear operator on $\Lambda$ that is adjoint to multiplication by $e_d$ with respect to the Hall inner product.
A main result of \cite{BMPS} (see Equation 2.6 and \S9.2 therein) expresses  $e_d^\perp$ in terms of the strong Pieri operators.  We need only the following special case:
\begin{theorem}
\label{t u perp}
For any   $f \in \Span_{\ZZ[t]}\!\big\{\fs^{(k)}_{\mu}(\mathbf{x};t) \mid \mu \in \Par_\ell^k \big\}$,
$f \cdot e_{\ell}^\perp = f \cdot u_{\ell} u_{\ell-1} \cdots u_1$.
\end{theorem}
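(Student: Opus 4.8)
The plan is to reduce, by $\ZZ[t]$-linearity, to the single basis vector $f=\fs^{(k)}_\mu$ with $\mu\in\Par^k_\ell$. If $\ell(\mu)<\ell$, both sides vanish: the $(k{+}1)$-core $\p^{-1}(\mu)$ has only $\ell(\mu)<\ell$ rows, so there is no strong marked cover out of it marked by $\ell$, i.e.\ $\SMT^k(\ell\,;\mu)=\varnothing$, whence $\fs^{(k)}_\mu\cdot u_\ell=0$ by \eqref{e strong Pieri operator}; meanwhile $\fs^{(k)}_\mu$ is a $\ZZ[t]$-combination of Schur functions $s_\lambda$ with $\lambda\gd\mu$ (hence $\ell(\lambda)\le\ell(\mu)<\ell$), so $\fs^{(k)}_\mu\cdot e_\ell^\perp=0$ as well. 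So assume $\ell(\mu)=\ell$; I will show both sides equal the Catalan function $H(\Delta^k(\mu);\mu-(1^\ell))$. (Compare \cite[Eq.~2.6]{BMPS}, of which this theorem is the restriction to $\Span_{\ZZ[t]}\{\fs^{(k)}_\mu:\mu\in\Par^k_\ell\}$, where the formula for $e_\ell^\perp$ collapses to the single term $u_\ell u_{\ell-1}\cdots u_1$.)

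\emph{The left side.} I would compute $e_\ell^\perp$ on the raising-operator presentation $\fs^{(k)}_\mu=\prod_{(i,j)\in\Delta^k(\mu)}(1-tR_{ij})^{-1}s_\mu$. The elementary input is that $e_\ell^\perp$ acts on the Jacobi--Trudi span $\Span_{\ZZ[t]}\{h_{a_1}\cdots h_{a_\ell}:a\in\ZZ^\ell\}$ by the uniform index shift $a\mapsto a-(1^\ell)$: expanding $e_\ell^\perp(h_{a_1}\cdots h_{a_\ell})$ via $\Delta(e_\ell)=\sum_{i+j=\ell}e_i\otimes e_j$ and using $h_a\cdot e_0^\perp=h_a$, $h_a\cdot e_1^\perp=h_{a-1}$, and $h_a\cdot e_i^\perp=0$ for $i\ge2$ (no vertical $i$-strip fits in a single row), only the all-$e_1^\perp$ term survives. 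Hence $s_\gamma\cdot e_\ell^\perp=s_{\gamma-(1^\ell)}$ for every $\gamma\in\ZZ^\ell$; since the shift $\gamma\mapsto\gamma-(1^\ell)$ commutes with every raising operator $R_{ij}$, the raising-operator formalism of \cite[\S4.3]{BMPS} then gives $H(\Psi;\gamma)\cdot e_\ell^\perp=H(\Psi;\gamma-(1^\ell))$ for any indexed root ideal $(\Psi,\gamma)$ of length $\ell$, and in particular $\fs^{(k)}_\mu\cdot e_\ell^\perp=H(\Delta^k(\mu);\mu-(1^\ell))$.

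\emph{The right side.} Set $\gamma^{(p)}:=\mu-\epsilon_\ell-\cdots-\epsilon_{p+1}=(\mu_1,\dots,\mu_p,\mu_{p+1}-1,\dots,\mu_\ell-1)$ for $0\le p\le\ell$; because $\mu_\ell\ge1$ each $\gamma^{(p)}$ is a genuine partition, and $\style(\Delta^k(\mu),\gamma^{(p)})_i\le\style(\Delta^k(\mu),\mu)_i\le k$ for all $i$, so $H(\Delta^k(\mu);\gamma^{(p)})\in\Lambda^k$ by Proposition~\ref{p style k space} and $u_p$ is defined on it. I claim, by downward induction on $p$ with base $\fs^{(k)}_\mu=H(\Delta^k(\mu);\gamma^{(\ell)})$, that $\fs^{(k)}_\mu\cdot u_\ell u_{\ell-1}\cdots u_{p+1}=H(\Delta^k(\mu);\gamma^{(p)})$. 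The inductive step is the one-box strong Pieri identity $H(\Delta^k(\mu);\gamma^{(p)})\cdot u_p=H(\Delta^k(\mu);\gamma^{(p)}-\epsilon_p)=H(\Delta^k(\mu);\gamma^{(p-1)})$, which I would read off from the proof of \cite[Theorem 9.2]{BMPS}: that argument establishes \eqref{ec flm k schur} not only for $k$-Schur Catalan functions $\fs^{(k)}_\lambda=H(\Delta^k(\lambda);\lambda)$ but for any $H(\Psi;\lambda)\in\Lambda^k$ with $\lambda$ a partition obeying the style bound. Taking $p=0$ gives $\fs^{(k)}_\mu\cdot u_\ell u_{\ell-1}\cdots u_1=H(\Delta^k(\mu);\mu-(1^\ell))$, which matches the left side and completes the proof.

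The hard part is the inductive step just used: one must know that $u_p$ still implements the index shift $\lambda\mapsto\lambda-\epsilon_p$ on the \emph{intermediate} Catalan functions $H(\Delta^k(\mu);\gamma^{(p)})$, which for $p<\ell$ are typically not themselves $k$-Schur functions --- their root ideal $\Delta^k(\mu)$ strictly contains $\Delta^k(\gamma^{(p)})$. Should \cite[Theorem 9.2]{BMPS} not be available in the generality needed, the fallback is to expand $H(\Delta^k(\mu);\gamma^{(p)})$ in the $k$-Schur basis, apply $u_p$ termwise by \eqref{e strong Pieri operator}, and recombine using the Catalan-function straightening identities of \cite[\S4.3]{BMPS}; checking that this bookkeeping closes up is the only genuinely delicate point. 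The linearity reduction and the left-side computation are routine.
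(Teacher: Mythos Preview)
The paper does not prove this theorem; it imports it wholesale from \cite{BMPS} (Equation~2.6 and \S9.2 there), so there is no in-paper argument to compare against. Your reduction by linearity and your treatment of the case $\ell(\mu)<\ell$ are fine, and your left-side computation $\fs^{(k)}_\mu\cdot e_\ell^\perp = H(\Delta^k(\mu);\mu-1^\ell)$ is exactly the identity the paper itself invokes later in the proof of Theorem~\ref{t HL to k schur intro}.

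The genuine gap is the right-side inductive step. You need
\[
H(\Delta^k(\mu);\gamma^{(p)})\cdot u_p \;=\; H(\Delta^k(\mu);\gamma^{(p-1)})
\]
for $0<p<\ell$, where the weight $\gamma^{(p)}\ne\mu$ and typically $\Delta^k(\mu)\supsetneq\Delta^k(\gamma^{(p)})$. Equation~\eqref{ec flm k schur} as stated here covers only the case where the Catalan function is an honest $k$-Schur function, i.e.\ weight $\mu$ with root ideal $\Delta^k(\mu)$. Your assertion that ``the proof of \cite[Theorem~9.2]{BMPS}'' already delivers the more general identity is unverified speculation about an external reference; and your fallback --- expand in the $k$-Schur basis, apply $u_p$ termwise, recombine via straightening --- is not a proof sketch but a restatement of the difficulty. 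The operator $u_p$ is \emph{defined} via its action on the $k$-Schur basis, so showing that this action agrees with ``subtract $\epsilon_p$ from the weight while keeping the root ideal $\Delta^k(\mu)$'' on the intermediate Catalan functions $H(\Delta^k(\mu);\gamma^{(p)})$ is precisely the nontrivial content you are trying to prove. You have correctly located where the work lies, and you say so yourself in your final paragraph, but you have not done that work; the argument as written is incomplete.
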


\begin{remark}
\label{r integrality}
In \cite{BMPS}, we worked with symmetric functions over the coefficient ring  $\QQ(t)$
rather than  $\ZZ[t]$, but it is easily checked that Theorem \ref{t u perp} and all other results of \cite{BMPS} cited here
hold over  $\ZZ[t]$.
\end{remark}

\section{Catalan operators}

Our results on modified Hall-Littlewood polynomials and $k$-split polynomials make use of symmetric
function operators of Jing and Garsia \cite{Jing, Garsiaop} and Shimozono-Zabrocki \cite{SZ}.
It is natural from our perspective to frame these in the context of more general Catalan operators,
which recover Catalan functions upon action on 1.

Garsia's version \cite{Garsiaop} of Jing's Hall-Littlewood vertex operators \cite{Jing}
are the symmetric function operators defined for any $m\in \ZZ$ by
\begin{align}
\label{ed Jing}
 \bb_m = \sum_{i,j \ge 0}  (-1)^i t^j h_{m+i+j}(\mathbf{x}) e_i^\perp h_j^\perp \, \in \End_{\ZZ[t]}(\Lambda).
\end{align}
These are creation operators for the modified Hall-Littlewood polynomials:
\begin{equation}
\label{eq:HLJing}
\bb_m \, H_{\mu}(\mathbf x;t)=
H_{(m,\mu)}(\mathbf x;t)  ~~\ \ \text{ for $m \ge \mu_1$}.
\end{equation}

For $\alpha \in \ZZ^\ell$, set $\tilde{\bb}_\alpha := \bb_{\alpha_1}\bb_{\alpha_2}\cdots\bb_{\alpha_\ell}$.

\begin{definition}
\label{d Catalan operator}
The \emph{Catalan operator} associated to an indexed root ideal $(\Psi, \gamma)$ is
the symmetric function operator given by
\begin{align}
\label{ed HH definition CHL}
\HHH^\Psi_\gamma \, &= \prod_{(i,j) \in \Delta^+ \setminus \Psi} (1-t\mathbf{R}_{i j})\tilde{\bb}_\gamma  \ \in \End_{\ZZ[t]}(\Lambda),
\end{align}
where the raising operator $\mathbf{R}_{i j}$ acts on the subscripts of the $\tilde{\bb}_\alpha$ by $\mathbf{R}_{i j}\tilde{\bb}_\alpha = \tilde{\bb}_{\alpha + \epsilon_i - \epsilon_j}$.
\end{definition}

Letting Catalan operators act on 1, we recover Catalan functions:
\begin{align}
\label{e Catalan op eq fun}
\HHH^\Psi_\gamma \cdot 1 = \HRI   \quad \text{ for any indexed root ideal  $(\Psi, \gamma)$}.
\end{align}
This holds by \cite[Proposition 4.7]{BMPS}.
The Catalan operators simultaneously generalize  the iterated Garsia-Jing operators $\tilde{\bb}_\alpha$ and
the generalized Hall-Littlewood vertex operators  $\bb_\alpha$ of Shimozono-Zabrocki \cite{SZ}:
for any  $\alpha \in \ZZ^\ell$,
\begin{align}
\text{$\tilde{\bb}_\alpha  =
 \HHH^{\Delta^+}_\alpha$ \quad and \quad \ $\bb_\alpha = \HHH^\varnothing_\alpha$ \,.}
\end{align}
The latter follows from, e.g., the description \cite[Equation 6.7]{LMksplit} of the  $\bb_\alpha$ operators.

\begin{proposition}
\label{p Catalan box times}
Catalan operators obey the following composition law:
for any indexed root ideals $(\Psi, \mu)$ and  $(\Phi, \nu)$ of lengths  $r$ and $\ell-r$, respectively, there holds
\begin{align}
\label{ep Catalan box times}
\HHH^{\Psi \,}_\mu \HHH^{\Phi\,}_\nu = \HHH^{\Psi \uplus\Phi \, }_{\mu\nu}\,,
\end{align}
where $\uplus$ is defined in \eqref{ed root ideal uplus}.
\end{proposition}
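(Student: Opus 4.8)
The plan is to reduce the identity $\HHH^{\Psi}_\mu \HHH^{\Phi}_\nu = \HHH^{\Psi\uplus\Phi}_{\mu\nu}$ to a direct comparison of the defining expressions in Definition~\ref{d Catalan operator}, unwinding the raising-operator products on both sides. First I would expand the left-hand side: by definition $\HHH^{\Psi}_\mu = \prod_{(i,j)\in\Delta^+_r\setminus\Psi}(1-t\mathbf{R}_{ij})\,\tilde{\bb}_\mu$ and similarly $\HHH^{\Phi}_\nu = \prod_{(i,j)\in\Delta^+_{\ell-r}\setminus\Phi}(1-t\mathbf{R}_{ij})\,\tilde{\bb}_\nu$. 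Since $\tilde{\bb}_\mu = \bb_{\mu_1}\cdots\bb_{\mu_r}$ and $\tilde{\bb}_\nu = \bb_{\nu_1}\cdots\bb_{\nu_{\ell-r}}$, their composition is just $\tilde{\bb}_{\mu\nu}$ on the full index set $[\ell]$; so the only real content is matching up the raising-operator factors. The factors from $\HHH^{\Psi}_\mu$ act on subscripts in positions $1,\dots,r$, while those from $\HHH^{\Phi}_\nu$ must be reindexed: the factor $(1-t\mathbf{R}_{ij})$ for $(i,j)\in\Delta^+_{\ell-r}\setminus\Phi$ becomes $(1-t\mathbf{R}_{i+r,\,j+r})$ when acting on the last $\ell-r$ subscripts of $\tilde{\bb}_{\mu\nu}$. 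The key combinatorial input is exactly the defining equation~\eqref{ed root ideal uplus}: $\Delta^+_\ell\setminus(\Psi\uplus\Phi) = (\Delta^+_r\setminus\Psi)\sqcup\{(i+r,j+r):(i,j)\in\Delta^+_{\ell-r}\setminus\Phi\}$. So the product of all the reindexed $(1-t\mathbf{R}_{ab})$ factors coming from both sides is precisely $\prod_{(a,b)\in\Delta^+_\ell\setminus(\Psi\uplus\Phi)}(1-t\mathbf{R}_{ab})$, which is the raising-operator prefactor in $\HHH^{\Psi\uplus\Phi}_{\mu\nu}$.

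The one subtlety to address carefully is the legitimacy of moving the raising operators from $\HHH^{\Phi}_\nu$ "past" the operators $\bb_{\mu_1},\dots,\bb_{\mu_r}$ so that they land on the correct subscripts of the combined $\tilde{\bb}_{\mu\nu}$. This is really a statement that raising operators acting on disjoint blocks of subscripts commute with each other and interact correctly with concatenation of the $\tilde{\bb}$-sequences; concretely, $\mathbf{R}_{ij}$ for $i,j\le r$ and $\mathbf{R}_{i'j'}$ for $i',j'>r$ commute, and $\mathbf{R}_{i+r,j+r}\,\tilde{\bb}_{\mu\nu} = \tilde{\bb}_{(\mu,\nu+\epsilon_i-\epsilon_j)} = \bb_{\mu_1}\cdots\bb_{\mu_r}\cdot(\mathbf{R}_{ij}\tilde{\bb}_\nu)$. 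Since raising operators act only on subscripts (never on the $\mathbf{x}$-variables or on the operator composition in $\End_{\ZZ[t]}(\Lambda)$), these manipulations are formal and valid; I would cite the discussion of raising operators in \cite[\S4.3]{BMPS} for the precise bookkeeping, exactly as the paper does for~\eqref{e d HH gamma Psi}. Assembling these pieces, both sides equal $\prod_{(a,b)\in\Delta^+_\ell\setminus(\Psi\uplus\Phi)}(1-t\mathbf{R}_{ab})\,\tilde{\bb}_{\mu\nu}$, which is $\HHH^{\Psi\uplus\Phi}_{\mu\nu}$ by definition.

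I expect the main obstacle to be purely notational rather than conceptual: one must be scrupulous about the fact that $\Delta^+\setminus\Psi$ in the definition of $\HHH^\Psi_\gamma$ is taken inside $\Delta^+_{\ell(\gamma)}$ (the ambient size depends on the length of the weight), so that when composing operators of lengths $r$ and $\ell-r$ the ambient poset correctly grows to $\Delta^+_\ell$, and the "missing" roots of the $r\times(\ell-r)$ rectangle — which lie in $\Psi\uplus\Phi$ by construction — contribute no raising-operator factors to either side. Once this indexing is set up cleanly, the proof is a one-line comparison. An alternative, even shorter route would be to apply both sides to $1$ and invoke~\eqref{e Catalan op eq fun} together with a known $\uplus$-compatibility of Catalan \emph{functions}, but since Catalan operators are determined by more than their value on $1$ (they are honest endomorphisms), the direct comparison above is the cleaner and more complete argument.
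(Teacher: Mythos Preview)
Your proposal is correct and follows essentially the same approach as the paper's proof: both unwind Definition~\ref{d Catalan operator}, use $\tilde{\bb}_\mu\tilde{\bb}_\nu = \tilde{\bb}_{\mu\nu}$, and invoke the decomposition~\eqref{ed root ideal uplus} of $\Delta^+_\ell\setminus(\Psi\uplus\Phi)$ to match the raising-operator factors. The paper compresses this into a two-line computation and does not comment on the reindexing or commutation subtleties you spell out, but the argument is the same.
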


\begin{proof}
We compute using Definition \ref{d Catalan operator}\,:
\begin{align*}
 \HHH^{\Psi\uplus \Phi\,}_{ \mu \nu} &=  \prod_{(i,j) \in \Delta_\ell^+ \setminus (\Psi\uplus \Phi)} (1-t\mathbf{R}_{i j})\tilde{\bb}_{\mu\nu}   \\
&= \bigg( \prod_{(i,j) \in \Delta_r^+ \setminus \Psi} (1-t\mathbf{R}_{i j})\tilde{\bb}_{\mu}\bigg)  \bigg(  \prod_{(i,j) \in \Delta_{\ell-r}^+ \setminus \Phi}
 (1-t\mathbf{R}_{i j})\tilde{\bb}_{\nu}\bigg) = \HHH^\Psi_\mu \, \HHH^{\Phi \, }_\nu. \  \qedhere
\end{align*}
\end{proof}

The  $t=1$ specializations of Catalan operators and functions can be made precise as follows:
the ring homomorphism  $\ZZ[t] \to \ZZ$, $t \mapsto 1$
makes  $\ZZ$ a  $\ZZ[t]$-algebra
and  $\ZZ \tsr_{\ZZ[t]} \!-$ is a
functor from  $\ZZ[t]$-$\Mod$ to $\ZZ$-$\Mod$ which we denote  by  $|_{t=1}$.
Let $\Lambda_{\ZZ} = \Lambda|_{t=1} = \Lambda/(t-1) = \ZZ[h_1, h_2, \dots]$.
Specializing $t=1$ at the level of elements of $\Lambda$ is then defined via the canonical
ring homomorphism
$\pi: \Lambda \to \Lambda/(t-1) = \Lambda_\ZZ$\,:
\[ H(\Psi ; \gamma)(\mathbf{x};1 ) := \pi(H(\Psi ; \gamma)(\mathbf{x};t ))
\  \ \text{  and  } \ \
\fs^{(k)}_\mu(\mathbf{x};1) := \pi(\fs^{(k)}_\mu(\mathbf{x};t)).
\]

\begin{proposition}
\label{p Catalan mult}
At  $t=1$, Catalan operators reduce to multiplication by Catalan functions:
for any  $g \in \Lambda_\ZZ$,
$\HHH^{\Psi \,}_\mu|_{t=1} (g) = (\HHH^{\Psi \,}_\mu|_{t=1} \cdot 1) \, g = H(\Psi ;\mu)(\mathbf{x};1) \, g$.
\end{proposition}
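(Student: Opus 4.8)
The plan is to reduce everything to one clean fact: the $t=1$ specialization of each Garsia--Jing operator $\bb_m$ is simply multiplication by $h_m$. Granting this, I would expand $\HHH^\Psi_\mu = \prod_{(i,j)\in\Delta^+\setminus\Psi}(1-t\mathbf{R}_{ij})\,\tilde\bb_\mu$ as a finite $\ZZ[t]$-linear combination $\sum_\beta c_\beta(t)\,\tilde\bb_\beta$ of iterated operators $\tilde\bb_\beta = \bb_{\beta_1}\cdots\bb_{\beta_\ell}$ (this is just carrying out the raising-operator action on the subscripts). Since $|_{t=1}$ respects composition and scalar extension, $\tilde\bb_\beta|_{t=1}$ is the composite of the multiplication operators by $h_{\beta_1},\dots,h_{\beta_\ell}$; as multiplication operators on $\Lambda_\ZZ$ commute and compose to multiplication by the product, $\tilde\bb_\beta|_{t=1}$ is multiplication by $h_\beta := h_{\beta_1}\cdots h_{\beta_\ell}$, and hence $\HHH^\Psi_\mu|_{t=1}$ is multiplication by the single symmetric function $f := \sum_\beta c_\beta(1)\,h_\beta \in \Lambda_\ZZ$. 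Applying this operator to $1$ and using that $|_{t=1}$ is induced by the quotient map $\pi\colon\Lambda\to\Lambda_\ZZ$ together with \eqref{e Catalan op eq fun} then identifies the multiplier: $f = \HHH^\Psi_\mu|_{t=1}(1) = \pi(\HHH^\Psi_\mu\cdot 1) = \pi(H(\Psi;\mu)(\mathbf{x};t)) = H(\Psi;\mu)(\mathbf{x};1)$. This yields both asserted equalities simultaneously.

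To prove the key fact about $\bb_m$, I would start from \eqref{ed Jing}, set $t=1$, and regroup the double sum by $N = i+j$, obtaining $\bb_m|_{t=1} = \sum_{N\ge 0} h_{m+N}\big(\sum_{i+j=N}(-1)^i e_i^\perp h_j^\perp\big)$, where $h_{m+N}$ denotes multiplication by $h_{m+N}$ and operators are composed right to left. The inner sum collapses: applying the adjoint operation $(\cdot)^\perp$ to the fundamental $e$--$h$ relation $\sum_{i+j=N}(-1)^i e_i h_j = \delta_{N,0}$ (valid for $N\ge 0$), and using that $(\cdot)^\perp$ sends a product of multiplication operators to the composite of their adjoints while multiplication operators commute, gives $\sum_{i+j=N}(-1)^i e_i^\perp h_j^\perp = \delta_{N,0}\,\mathrm{id}$. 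So only the $N=0$ term survives and $\bb_m|_{t=1}$ is multiplication by $h_m$, as desired. All the infinite sums involved are legitimate because, acting on any fixed graded component of $\Lambda$, only finitely many terms are nonzero.

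I do not expect a genuine obstacle here; the point worth flagging is conceptual rather than technical. At a generic value of $t$ the operator $\bb_m$ is \emph{not} a multiplication operator — the lowering pieces $e_i^\perp h_j^\perp$ contribute — so the argument must actually invoke the $e$--$h$ relation to see those pieces cancel precisely at $t=1$; one cannot simply appeal to $\bb_m\cdot 1 = h_m$ and the creation-operator property \eqref{eq:HLJing}. The remaining steps are pure bookkeeping: verifying that the raising-operator expansion of $\HHH^\Psi_\mu$ is a finite $\ZZ[t]$-combination of the $\tilde\bb_\beta$, and that $|_{t=1}$ commutes with these finite operations — both immediate.
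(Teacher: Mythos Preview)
Your proposal is correct and follows the same approach as the paper: both reduce to the fact that $\bb_m|_{t=1}$ is multiplication by $h_m$, then use \eqref{e Catalan op eq fun} and compatibility of $|_{t=1}$ with composition to identify the multiplier. The paper's proof simply asserts that $\bb_m|_{t=1}$ equals multiplication by $h_m$, whereas you supply the computation via the $e$--$h$ relation; your added detail is sound and the remaining bookkeeping matches the paper's.
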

\begin{proof}
The first equality follows from the fact that $\bb_m|_{t=1} \in \End_\ZZ(\Lambda_\ZZ)$ is equal to multiplication
by $h_m(\mathbf{x})$ (recall $h_m(\mathbf{x}):=0$ for $m<0$).
The second holds by \eqref{e Catalan op eq fun} and the general fact that for any  $\bb \in \End_{\ZZ[t]}(\Lambda)$ and  $f \in \Lambda$,
$\pi(\bb (f)) = \bb|_{t=1}(\pi(f))$.
\end{proof}

\begin{corollary}
\label{c Catalan mult t1}
For any indexed root ideals $(\Psi, \mu)$ and  $(\Phi, \nu)$,
\begin{align}
\label{ec Catalan mult t1}
H(\Psi; \mu)(\mathbf{x};1)  H(\Phi; \nu)(\mathbf{x};1) = H(\Psi \uplus\Phi; \mu\nu)(\mathbf{x};1).
\end{align}
\end{corollary}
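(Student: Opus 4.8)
The plan is to deduce this directly from the two operator-level results already available: the composition law of Proposition~\ref{p Catalan box times} together with the $t=1$ specialization of Catalan operators in Proposition~\ref{p Catalan mult}, bridging operators and functions via \eqref{e Catalan op eq fun}. No new combinatorics is needed; the entire argument is bookkeeping with the specialization map $\pi\colon\Lambda\to\Lambda_\ZZ$.

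First I would record that the specialization functor $|_{t=1}$ is multiplicative on operators. For $\bb,\bb'\in\End_{\ZZ[t]}(\Lambda)$ and $f\in\Lambda$, two applications of the identity $\pi(\bb(f))=\bb|_{t=1}(\pi(f))$ (the same identity invoked in the proof of Proposition~\ref{p Catalan mult}) give $\pi(\bb\bb'(f))=\bb|_{t=1}\big(\bb'|_{t=1}(\pi(f))\big)$, so $(\bb\bb')|_{t=1}=\bb|_{t=1}\circ\bb'|_{t=1}$ in $\End_\ZZ(\Lambda_\ZZ)$. Applying this to Proposition~\ref{p Catalan box times} yields $\HHH^{\Psi\uplus\Phi}_{\mu\nu}\big|_{t=1}=\HHH^{\Psi}_{\mu}\big|_{t=1}\circ\HHH^{\Phi}_{\nu}\big|_{t=1}$.

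Next I would evaluate both sides at $1\in\Lambda_\ZZ$. Since $\pi(1)=1$, \eqref{e Catalan op eq fun} gives $\HHH^{\Psi\uplus\Phi}_{\mu\nu}\big|_{t=1}(1)=\pi\big(\HHH^{\Psi\uplus\Phi}_{\mu\nu}\cdot 1\big)=H(\Psi\uplus\Phi;\mu\nu)(\mathbf{x};1)$, and likewise $\HHH^{\Phi}_{\nu}\big|_{t=1}(1)=H(\Phi;\nu)(\mathbf{x};1)$. Feeding the latter into the composition and applying Proposition~\ref{p Catalan mult} with $g=H(\Phi;\nu)(\mathbf{x};1)\in\Lambda_\ZZ$ turns $\HHH^{\Psi}_{\mu}\big|_{t=1}$ into multiplication by $H(\Psi;\mu)(\mathbf{x};1)$, so the right-hand side becomes $H(\Psi;\mu)(\mathbf{x};1)\,H(\Phi;\nu)(\mathbf{x};1)$. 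Equating the two evaluations is exactly \eqref{ec Catalan mult t1}.

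I do not expect any genuine obstacle here. The only point worth stating with care is the multiplicativity of $|_{t=1}$ on $\End_{\ZZ[t]}(\Lambda)$, which is immediate as above; alternatively one could bypass operators and simply apply $\pi$ to the identity obtained from Proposition~\ref{p Catalan box times} acting on $1$, but routing through Proposition~\ref{p Catalan mult} keeps the argument cleanest.
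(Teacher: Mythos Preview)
Your proof is correct and follows essentially the same route as the paper's: apply the functor $|_{t=1}$ to the composition law of Proposition~\ref{p Catalan box times}, use Proposition~\ref{p Catalan mult}, and act on~$1$. Your explicit verification that $|_{t=1}$ is multiplicative on $\End_{\ZZ[t]}(\Lambda)$ simply spells out a step the paper leaves implicit.
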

\begin{proof}
Apply the functor $|_{t=1}$ to $\HHH^{\Psi \,}_\mu \HHH^{\Phi\,}_\nu = \HHH^{\Psi \uplus\Phi \, }_{\mu\nu}$,
use Proposition \ref{p Catalan mult}, and act on~1.
\end{proof}


\section{Modified Hall-Littlewood polynomials}
\label{s HL into kschur}

We give a positive combinatorial formula for the  $k$-Schur expansion
of the modified Hall-Littlewood polynomials $H(\Delta^+;\mu)=H_\mu(\mathbf {x};t)$,
which is succinctly expressed in terms of the strong Pieri operators.
This resolves the $q=0$ specialization of the strengthened Macdonald positivity conjecture \cite{LLM}.

For a skew shape  $\theta$, the \emph{superstandard tableau $Z_{\theta}$} is the
unique filling of  $\theta$ whose  $i$-th row consists entirely of $i$'s.
The \emph{column reading word} of a tableau $T$, denoted $\creading(T)$,
is the word obtained by concatenating the columns of $T$ (reading each column bottom to top), starting with the leftmost column.
%
For example, with $\theta=(44444)/(43220)$,
\[
Z_{\theta}=
{\fontsize{6pt}{5pt}\selectfont
\tableau{
\bl &\bl &\bl &\fr[r]  \\
\bl &\bl&\bl& 2 \\
\bl &\bl & 3&3\\
\bl &\bl & 4 & 4\\
5 & 5 & 5 & 5
}} \qquad \text{ and } \quad
\creading(Z_{\theta}) = 555435432
\, .\]

Given a word  $w = w_1 \cdots w_d$ in the positive integers, we write $u_w = u_{w_1} \cdots u_{w_d}$ for
the corresponding monomial in the strong Pieri operators.

\begin{theorem}
\label{t HL to k schur intro}
For any $\mu \in \Par^k_\ell$,
the $k$-Schur expansion of the modified Hall-Littlewood polynomial $H_\mu(\mathbf{x};t)$ is given by
\begin{align*}
H_\mu &= \fs^{(k)}_{k^\ell} \cdot u_{\creading(Z_{k^{\ell}/\mu})} = \sum_{T \in \SMT^k\big(\creading(Z_{k^{\ell}/\mu})\, ; \, k^\ell\big)} t^{\spin(T)} \fs^{(k)}_{\inside(T)}.
\end{align*}
\end{theorem}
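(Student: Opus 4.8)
The plan is to express the modified Hall-Littlewood polynomial $H_\mu$ as a Catalan operator applied to $1$, and then recognize that Catalan operator as a sequence of strong Pieri operators applied to the $k$-Schur function $\fs^{(k)}_{k^\ell}$. The starting observation is that $H_\mu = H(\Delta^+;\mu) = \HHH^{\Delta^+}_\mu \cdot 1 = \tilde{\bb}_\mu \cdot 1 = \bb_{\mu_1} \cdots \bb_{\mu_\ell} \cdot 1$ by \eqref{e Catalan op eq fun}. On the other hand, $\fs^{(k)}_{k^\ell} = H(\Delta^k(k^\ell);k^\ell)$, and since $\Delta^k(k^\ell) = \{(i,j)\in\Delta^+_\ell \mid i < j\} = \Delta^+_\ell$ (because $k - \mu_i + i = i$ when $\mu_i = k$), we have $\fs^{(k)}_{k^\ell} = H(\Delta^+;k^\ell) = H_{k^\ell}$. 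So both sides begin from the fully-filled $\ell\times k$ rectangle, and the claim is that applying the monomial $u_{\creading(Z_{k^\ell/\mu})}$ in strong Pieri operators carves the rectangle down to $\mu$, matching the Hall-Littlewood polynomial.

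The key technical bridge is the formula \eqref{ec flm k schur}: $\fs^{(k)}_\nu \cdot u_p = H(\Delta^k(\nu);\nu-\epsilon_p)$ for $\nu\in\Par^k_\ell$ and $p\in[\ell]$. Iterating this, each strong Pieri operator removes a single box from the current partition while keeping the root ideal formally "pinned" at $\Delta^k$ of the partition — except one must track how $\Delta^k(\nu)$ changes as $\nu$ decreases, which is exactly why the column reading word of the superstandard tableau $Z_{k^\ell/\mu}$ is the right word: removing boxes column-by-column (bottom to top within each column, leftmost column first) from the skew shape $k^\ell/\mu$ corresponds to peeling $k^\ell$ down to $\mu$ one cell at a time in an order for which the intermediate shapes are all partitions and the root ideal bookkeeping is consistent. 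So the strategy is: (1) show $\fs^{(k)}_{k^\ell} \cdot u_{\creading(Z_{k^\ell/\mu})} = H(\Delta^+;\mu) = H_\mu$ by a careful induction on $\ell$ and on the number of boxes removed, using \eqref{ec flm k schur} together with a compatibility between the raising-operator description of Catalan functions and the operator $\bb_{\mu_\ell}$; (2) the second equality in the theorem is then immediate from the definition \eqref{e strong Pieri operator} of the strong Pieri operators and the fact that $u_w = u_{w_1}\cdots u_{w_d}$, expanding the composite as a sum over chains of strong marked covers, i.e. over $\SMT^k(\creading(Z_{k^\ell/\mu});k^\ell)$.

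A cleaner route to step (1) may be to induct on $\ell$ directly at the operator level: write $H_\mu = \bb_{\mu_1} H_{(\mu_2,\dots,\mu_\ell)}$, assume by induction the expansion of $H_{(\mu_2,\dots,\mu_\ell)}$ in terms of $\fs^{(k)}_{k^{\ell-1}}$ and strong Pieri operators, and then use Theorem~\ref{t u perp} (expressing $e_\ell^\perp$ as $u_\ell u_{\ell-1}\cdots u_1$) together with the Jing-operator formula $\bb_m = \sum_{i,j\ge 0}(-1)^i t^j h_{m+i+j} e_i^\perp h_j^\perp$ to rewrite the action of $\bb_{\mu_1}$ as a composite of strong Pieri operators acting on $\Lambda^k$. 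The interaction of the $h^\perp_j$ and $e^\perp_i$ pieces of $\bb_m$ with the strong Pieri operators — and checking the bookkeeping that the column reading word of $Z_{k^\ell/\mu}$ is precisely what this recursion produces — is where I expect the real work to lie; the combinatorial identity between "peel the rectangle by columns" and the Jing-operator recursion is the crux, and making sure all intermediate Catalan functions stay in $\Lambda^k$ via Proposition~\ref{p style k space} (the $\style$ condition) is the side condition that must be verified at each step. Everything else — the reduction of $\Delta^k(k^\ell)$ to $\Delta^+$, the passage from composite operators to sums over strong tableaux, and the $\NN[t]$-positivity (automatic from $t^{\spin(T)}$) — is routine.
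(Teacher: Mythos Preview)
Your proposal names the right raw materials---the identity $\fs^{(k)}_{k^\ell}=H_{k^\ell}$, the formula \eqref{ec flm k schur}, and Theorem~\ref{t u perp}---but neither route you sketch closes. Route~1 (iterating \eqref{ec flm k schur}) stalls exactly where you flag it: after one step you are at $H(\Delta^k(\nu);\nu-\epsilon_p)$, whose root ideal is $\Delta^k(\nu)$ rather than $\Delta^k(\nu-\epsilon_p)$, so \eqref{ec flm k schur} cannot be reapplied; you give no mechanism for tracking the accumulating root-ideal discrepancy over $|k^\ell/\mu|$ steps. Route~2 (peel off $\bb_{\mu_1}$ via the Jing formula) is worse: \eqref{ed Jing} is a signed infinite sum in $e_i^\perp$ and $h_j^\perp$, while Theorem~\ref{t u perp} only identifies the single operator $e_\ell^\perp$ with $u_\ell\cdots u_1$. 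There is no comparable identity for $h_j^\perp$ or for $e_i^\perp$ with $i<\ell$, so rewriting $\bb_{\mu_1}$ as a composite of strong Pieri operators this way is not available.

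The paper's proof sidesteps both problems by a two-case induction on $\ell+|k^\ell/\mu|$. When $\mu_1<k$, the rightmost column of $Z_{k^\ell/\mu}$ is full; one applies the inductive hypothesis to $\mu+1^\ell$ and then hits both sides with $e_\ell^\perp$, using Theorem~\ref{t u perp} on the right and the elementary identity $H(\Psi;\gamma)\cdot e_\ell^\perp=H(\Psi;\gamma-1^\ell)$ on the left. When $\mu_1=k$, one peels off $\bb_k$ (only $\bb_k$, never a general $\bb_{\mu_1}$) and invokes Lemma~\ref{l commute with square} with $r=1$, which says $\bb_k(\fs^{(k)}_\nu\cdot u_w)=\fs^{(k)}_{(k,\nu)}\cdot u_{w^+}$ where $w^+$ shifts each letter up by one. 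This commutation lemma is the bridge your plan is missing; it is not a consequence of the Jing formula but is proved separately (later in the paper) from Lemma~\ref{l multiply by rectangle easy} and \eqref{ec flm k schur}.
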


We give the proof now though it requires a result proved later, Lemma~\ref{l commute with square},
which describes the interaction of the strong Pieri operators with
the Garsia-Jing operators.

\begin{proof}
Set $\theta=k^\ell/\mu$.
The proof is by induction on $\ell + |\theta|$.
The base case
$\ell = 0$ holds by $H_{\emptyset} = 1 = \fs^{(k)}_\emptyset \cdot u_{\creading(Z_\theta)}$ ($\creading(Z_\theta)$ is the empty word).
Now suppose  $\ell > 0$.
If $\mu_1 < k$, then the rightmost column of $Z_\theta$ is a full column of length $\ell$, and so
$u_{\creading(Z_\theta)} = u_{\creading(Z_{\tilde{\theta}})} u_{\ell} u_{\ell-1} \cdots u_1$, where  $\tilde{\theta} := k^\ell/(\mu+1^\ell)$.
By the inductive hypothesis,
\begin{align}
H_{\mu + 1^\ell} = \fs^{(k)}_{k^\ell} \cdot u_{\creading(Z_{\tilde{\theta}})}. \label{e lambda plus one ell}
\end{align}
Applying $e_\ell^\perp$ to both sides and using Theorem \ref{t u perp} gives
\[H_{\mu + 1^\ell} \cdot e_\ell^\perp =
\fs^{(k)}_{k^\ell} \cdot u_{\creading(Z_{\tilde{\theta}})} u_{\ell} u_{\ell-1} \cdots u_{1} = \fs^{(k)}_{k^\ell} \cdot u_{\creading(Z_\theta)}.\]
Now $H(\Psi;\gamma) \cdot e_\ell^\perp  = H(\Psi; \gamma-1^\ell)$ for any indexed root ideal  $(\Psi, \gamma)$ of length  $\ell$, which  follows directly from
Definition \ref{d HH gamma Psi} and the fact that $s_\gamma \cdot e_\ell^\perp = s_{\gamma-1^\ell}$ for any  $\gamma \in \ZZ^\ell$.
Hence  $H_{\mu + 1^\ell} \cdot e_\ell^\perp = H_\mu$, completing the proof in the  $\mu_1 < k$ case.

Now suppose $\mu_1 = k$ and set $\hat{\theta} = k^{\ell-1}/(\mu_2, \dots, \mu_\ell)$.
Then $\creading(Z_\theta)$ does not contain any 1's and is obtained from $\creading(Z_{\hat{\theta}})$ by adding 1 to each letter.
Thus Lemma~\ref{l commute with square} with $r=1$ yields
\begin{align*}
\fs^{(k)}_{k^\ell} \cdot u_{\creading(Z_\theta)} = \bb_k \mysquare \big( \fs^{(k)}_{k^{\ell-1}} \cdot \myhat{u}_{\creading(Z_{\hat{\theta}})} \big)
= \bb_k \mysquareb H_{(\mu_2, \dots, \mu_\ell)} = H_\mu,
\end{align*}
where the second equality is by the inductive hypothesis and the third is by \eqref{eq:HLJing}.
\end{proof}

\begin{example}
\label{ex k Schur to Schur}
According to Theorem \ref{t HL to k schur intro},
the 3-Schur expansion of $H_{2211}$
is given by $\fs^{(3)}_{3333} \cdot u_4 u_3 u_4 u_3 u_2 u_1
=  \displaystyle \sum_{\SMT^3(434321 \, ; \, 3333)} \!\!\!\! t^{\spin(T)}\fs^{(3)}_{\inside(T)}$;  this sum is over sequences of the form
\[\kappa^{(0)} \xRightarrow{~~1~~} \kappa^{(1)} \xRightarrow{~~2~~} \kappa^{(2)} \xRightarrow{~~3~~}  \kappa^{(3)}  \xRightarrow{~~4~~} \kappa^{(4)}
\xRightarrow{~~3~~} \kappa^{(5)}
\xRightarrow{~~4~~} \kappa^{(6)} = \p^{-1}(3333)\, ,\]
which are illustrated below by placing letter $i$ in $\kappa^{(i)}/\kappa^{(i-1)}$.
\[\begin{tikzpicture}[xscale = .42,yscale = 2.02]
\tikzstyle{vertex}=[inner sep=3pt, outer sep=4pt]
\tikzstyle{framedvertex}=[inner sep=3pt, outer sep=4pt]
\tikzstyle{aedge} = [draw, thin, ->,black]
\tikzstyle{edge} = [draw, thick, -,black]
\tikzstyle{doubleedge} = [draw, thick, double distance=1pt, -,black]
\tikzstyle{hiddenedge} = [draw=none, thick, double distance=1pt, -,black]
\tikzstyle{dashededge} = [draw, very thick, dashed, black]
\tikzstyle{LabelStyleH} = [text=black, anchor=south]
\tikzstyle{LabelStyleHn} = [text=black, anchor=north]
\tikzstyle{LabelStyleV} = [text=black, anchor=east]
\setlength{\cellsize}{1.6ex}
\begin{scope}[xscale = 1.8][xshift = 540pt]
\node[vertex] at (0,5) {\fontsize{7.5pt}{5.7pt}\selectfont $\text{ \tableau{
~&~&~&~&~&~&\crc{1}&2&3&4&5&6\\~&~&~&1&\crc{2}&3&4&5&6\\1&2&\crc{3}&4&\crc{5}&6\\\crc{4}&5&\crc{6}\\}
}$};
\node[vertex] at (0,4) {\fontsize{7.5pt}{5.7pt}\selectfont $\text{
\tableau{&&&&&\fr[l,t]1&\fr[t,r]\crc{1}&2&3&4&5&6\cr
&&\fr[l,t]1&\fr[t,r]1&\crc{2}&3&4&5&6\cr
&2&\crc{3}&4&\crc{5}&6\cr
\crc{4}&5&\crc{6}}
}$};
\node[vertex] at (0,3) {\fontsize{7.5pt}{5.7pt}\selectfont $\text{
\tableau{
&&&&&\crc{1}&2&3&\fr[l,t]5&\fr[t,b]5&\fr[t,b,r] 5&6\cr
&&1&\crc{2}&3&\fr[l,t]5&\fr[t,b]5&\fr[t,b,r]5&6\cr
&\crc{3}&\fr[l,t]5&\fr[t,b]5&\fr[t,b,r]\crc{5}&6\cr
1&\crc{4}&\crc{6}
}}$};
\node[vertex] at (4,2) {\fontsize{7.5pt}{5.7pt}\selectfont $\text{
\tableau{&&&&\crc{1}&\fr[t,l]2&\fr[t]2&\fr[t,r]2&3&4&5&6\cr
&&\fr[t,l]2&\fr[t]2&\fr[t,r]\crc{2}&3&4&5&6\cr
&&\crc{3}&4&\crc{5}&6\cr
\crc{4}&5&\crc{6}
}}$};
\node[vertex] at (-4,2) {\fontsize{7.5pt}{5.7pt}\selectfont $\text{
\tableau{
&&&&\fr[t,l]1&\fr[t,r]\crc{1}&2&3&\fr[l,t]5&\fr[t,b]5&\fr[t,b,r] 5&6\cr
&\fr[t,l]1&\fr[t,r]1&\crc{2}&3&\fr[l,t]5&\fr[t,b]5&\fr[t,b,r]5&6\cr
&\crc{3}&\fr[l,t]5&\fr[t,b]5&\fr[t,b,r]\crc{5}&6\cr
&\crc{4}&\crc{6}
}
}$};
\node[vertex] at (0,1) {\fontsize{7.5pt}{5.7pt}\selectfont $\text{
\tableau{
&&&\fr[t,l]1&\fr[t]1&\fr[t,r]\crc{1}&2&3&\fr[t,b,l]5&\fr[t,b]5&\fr[t,b,r]5 &6\cr
&&&\crc{2}&3&\fr[t,l]5&\fr[t,b]5&\fr[t,b,r]5 &6\cr
&\crc{3}&\fr[t,l]5&\fr[t,b]5&\fr[t,b,r]\crc{5} &6\cr
&\crc{4}&\crc{6}
}}$};
\end{scope}
\begin{scope}[xshift = 520pt]
\node at (0,5.5) {$\text{Spin}$};
\node at (0,5) {4};
\node at (0,4) {3};
\node at (0,3) {2};
\node at (0,2) {1};
\node at (0,1) {0};
\end{scope}
\end{tikzpicture}\]
\[H_{2211} = t^4\, \fs^{(3)}_{33} + t^3\, \fs^{(3)}_{321} + t^2\, \fs^{(3)}_{321}+t\, \fs^{(3)}_{3111} + t\, \fs^{(3)}_{222} + \fs^{(3)}_{2211}.
\]
\end{example}

See also \cite[Figure 1]{BMPS}  for the 4-Schur expansion of $H_{1111}$.

\section{Schur times  $k$-Schur}

We give a positive combinatorial formula for the $k$-Schur expansion of a  $t$-analog of a Schur function times a $k$-Schur function when the
indexing partitions concatenate to a partition.
The involved combinatorics has the same spirit as
the $k$-Schur expansion of modified Hall-Littlewood polynomials,
but rather than using the action of strong Pieri operators derived
from one particular tableau word, it requires a set of tableaux words.


Let  $\SSYT_\theta(\A)$ denote the set of semistandard Young tableaux of skew shape  $\theta$ with entries from an alphabet  $\A$
(fillings of the diagram of  $\theta$ which are weakly increasing in rows and strictly increasing down columns).


\begin{theorem}
\label{t schur kschur}
For $\mu \in \Par^{k-r+1}_r$ and $\nu \in \Par^k$ such that $\mu\nu$ is a partition,
\begin{align}
\label{et schur kschur}
\bb_\mu \mysquareb \fs^{(k)}_\nu = \sum_{T \in \SSYT_{U/\mu}([r])} \fs^{(k)}_{U\nu} \cdot u_{\creading(T)} \, ,
\end{align}
where $U := (k-r+1)^r$ and  $\creading(T)$ is  as defined before Theorem~\ref{t HL to k schur intro}.
\end{theorem}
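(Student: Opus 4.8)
The plan is to convert the left side into a single Catalan function and then prove the resulting identity by induction on the skew shape $U/\mu$, peeling off boxes as in the proof of Theorem~\ref{t HL to k schur intro}. Since $\bb_\mu=\HHH^{\varnothing_r}_\mu$ and $\fs^{(k)}_\nu=\HHH^{\Delta^k(\nu)}_\nu\cdot 1$, the composition law (Proposition~\ref{p Catalan box times}) gives $\bb_\mu\fs^{(k)}_\nu=\HHH^{\varnothing_r\uplus\Delta^k(\nu)}_{\mu\nu}\cdot 1=H(\varnothing_r\uplus\Delta^k(\nu);\mu\nu)$. From \eqref{ed root ideal uplus} one computes $\nr(\varnothing_r\uplus\Delta^k(\nu))_i=r-i$ for $i\le r$, so $\style(\varnothing_r\uplus\Delta^k(\nu),\mu\nu)_i=r-i+\mu_i\le k$ — which is exactly where the hypothesis $\mu\in\Par^{k-r+1}_r$ enters — while for $i>r$ the style coincides with $\style(\Delta^k(\nu),\nu)_{i-r}\le k$; hence $H(\varnothing_r\uplus\Delta^k(\nu);\mu\nu)\in\Lambda^k$ by Proposition~\ref{p style k space}. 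It therefore remains to prove the identity, internal to $\Lambda^k$,
\begin{equation}
\label{eq:proposal}
H(\varnothing_r\uplus\Delta^k(\nu);\mu\nu)=\sum_{T\in\SSYT_{U/\mu}([r])}\fs^{(k)}_{U\nu}\cdot u_{\creading(T)}.
\end{equation}

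I would induct on $r$, and for fixed $r$ on $|U/\mu|=r(k-r+1)-|\mu|$. When $r=0$ both sides equal $\fs^{(k)}_\nu$. For $r\ge 1$ the base case is $\mu=U$: then $U/\mu=\varnothing$ and \eqref{eq:proposal} reads $H(\varnothing_r\uplus\Delta^k(\nu);U\nu)=\fs^{(k)}_{U\nu}=H(\Delta^k(U\nu);U\nu)$. Here $\varnothing_r\uplus\Delta^k(\nu)$ and $\Delta^k(U\nu)$ differ only in the roots $(i,j)$ with $2\le i\le r$ and $r<j<r+i$, and since $U=(k-r+1)^r$ is a $k$-rectangle the first $r$ entries of $U\nu$ all equal $k-r+1$, which forces those extra raising-operator factors to act trivially; this is the Catalan-function incarnation of the $k$-rectangle property, and can also be verified directly by a straightening computation carried out without straightening any intermediate Schur symbol (cf.\ \cite[\S4.3]{BMPS}). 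For the inductive step, assume $\mu\subsetneq U$. If $\mu_1<k-r+1$, the rightmost column of $U/\mu$ is a full column of length $r$ and (using that entries in row $i$ of a tableau in $\SSYT_{U/\mu}([r])$ never exceed $i$) is forced in every $T$ to read $1,2,\dots,r$ from top to bottom; deleting it identifies the remaining shape with $U/(\mu+1^r)$ via the column-index shift $c\mapsto c+1$, giving a bijection $T\mapsto T'$ of $\SSYT_{U/\mu}([r])$ with $\SSYT_{U/(\mu+1^r)}([r])$ under which $\creading(T)=\creading(T')\,r\,(r-1)\cdots 1$. Thus, by the inductive hypothesis for $\mu+1^r$ (a legal index, with $(\mu+1^r)\nu$ still a partition and $|U/(\mu+1^r)|=|U/\mu|-r$), the right side of \eqref{eq:proposal} equals $H(\varnothing_r\uplus\Delta^k(\nu);(\mu+1^r)\nu)\cdot u_r u_{r-1}\cdots u_1$; by the Catalan-function description of the strong Pieri operators (\cite[Theorem~9.2]{BMPS}; more precisely its proof gives $H(\Psi;\gamma)\cdot u_p=H(\Psi;\gamma-\epsilon_p)$ whenever $\style(\Psi,\gamma)_i\le k$ for all $i$) the product $u_r\cdots u_1$ subtracts $1^r$ from the first $r$ entries of the weight, recovering $H(\varnothing_r\uplus\Delta^k(\nu);\mu\nu)$. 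If instead $\mu_1=k-r+1$, the first row of $U/\mu$ is empty, and I would peel it off, reducing \eqref{eq:proposal} to its instance for $(\mu_2,\dots,\mu_r)$ — a case with smaller $r$ — by matching the sum over $\SSYT_{U/\mu}([r])$ with Lemma~\ref{l commute with square} (the general-$r$ form of the identity used in the proof of Theorem~\ref{t HL to k schur intro}) so as to extract a factor $\bb_{\mu_1}$.

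The delicate points are the raising-operator bookkeeping in the base case and in the column-peeling case (intermediate Schur symbols must never be straightened prematurely), and — the real crux — the row-peeling case $\mu_1=k-r+1$: tableaux in $\SSYT_{U/\mu}([r])$ whose first nonempty row contains small entries are not handled by the naive ``shift the marking word up by one'' available for the superstandard tableaux of Theorem~\ref{t HL to k schur intro}, so the reduction to smaller $r$ requires the correct general form of Lemma~\ref{l commute with square} together with a careful reorganization of the sum over $\SSYT_{U/\mu}([r])$ (and a corresponding bookkeeping of which rectangle and inner index $\nu$ appear after the reduction). Establishing that general form of Lemma~\ref{l commute with square} — describing how monomials in the strong Pieri operators built from an arbitrary column-reading word interact with the Garsia--Jing operators $\bb_m$ — is where the main work lies, and it is proved later in the paper.
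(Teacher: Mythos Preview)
Your overall strategy---column-peel when $\mu_1<k-r+1$, row-peel when $\mu_1=k-r+1$---is a natural analog of the Hall--Littlewood argument, but the column-peeling step rests on a false identity. You claim that \cite[Theorem~9.2]{BMPS} (or its proof) gives
\[
H(\Psi;\gamma)\cdot u_p \;=\; H(\Psi;\gamma-\epsilon_p)\qquad\text{whenever }\style(\Psi,\gamma)_i\le k\text{ for all }i,
\]
but this is not true. The paper's \eqref{ec flm k schur} says only that $\fs^{(k)}_\eta\cdot u_p=H(\Delta^k(\eta);\eta-\epsilon_p)$, i.e., the root ideal on the right is $\Delta^k(\eta)$, not an arbitrary $\Psi$. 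For a concrete counterexample take $k=3$, $r=2$, $\nu=(1)$, $\Psi=\varnothing_2\uplus\Delta^3(\nu)=\{(1,3),(2,3)\}$, and $\gamma=(2,2,1)$. Then $\style(\Psi,\gamma)=(3,2,1)$, all $\le 3$, yet
\[
H(\Psi;\gamma)=\fs^{(3)}_{221},\qquad H(\Psi;\gamma)\cdot u_2=\fs^{(3)}_{211},\qquad H(\Psi;\gamma-\epsilon_2)=H(\Psi;(2,1,1))=\fs^{(3)}_{211}+t\,\fs^{(3)}_{22},
\]
so the two sides differ by $t\,\fs^{(3)}_{22}$. In this example the discrepancy happens to vanish after you further apply $u_1$ (since $\fs^{(3)}_{22}\cdot u_1=0$), so the \emph{composite} $u_r\cdots u_1$ does land where you want---but that is exactly the content of the theorem you are trying to prove, not something you have independently established. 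Unlike the Hall--Littlewood proof, there is no $e_r^\perp$-type operator here that subtracts $1^r$ from only the first $r$ entries of a length-$\ell$ weight, so Theorem~\ref{t u perp} cannot be invoked.

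The row-peeling step is also not a reduction in the sense you suggest: for $r'=r-1$ the relevant rectangle is $(k-r+2)^{r-1}$, not $(k-r+1)^{r-1}$, and tableaux in $\SSYT_{U/\mu}([r])$ do not factor through a single application of Lemma~\ref{l commute with square} with a shifted alphabet (entries in rows $2,\dots,r$ can be as small as $1$). The paper avoids both obstacles by strengthening the statement to the flagged version (Theorem~\ref{t schur kschur flag}): the extra flag data $\mathbf{n}$ and the intermediate root ideals between $\varnothing_r\uplus\Delta^k(\nu)$ and $\Delta^k(\lambda\nu)$ are precisely what is needed to make a clean induction on $|\Psi|+|\alpha|$ go through, with the downpath Lemma~\ref{l expand downpath} (on the Catalan side) matched against Proposition~\ref{p flag decomposition tableau} and Corollary~\ref{c expand downpath} (on the tableau side). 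Your column/row-peeling moves do not stay within a class of Catalan functions closed under the operations you need.
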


This result combinatorially describes the class of Gromov-Witten invariants claimed in \S\ref{ss kSchur positivity conjecture} (see also Theorem~\ref{cor:gw2schurtimeskschur});
this is because when $t=1$, $\bb_\mu \mysquareb \fs^{(k)}_\nu$
reduces to the product $s_\mu \fs^{(k)}_\nu(\mathbf{x};1)$, and the condition $\mu \in \Par^{k-r+1}_r$
is equivalent to $\mu$ belonging to $\Par^k_r$ and
$\fs^{(k)}_\mu = s_\mu$.  The proof of Theorem~\ref{t schur kschur}
is given in Section \ref{s schur kschur proof}.

\begin{remark}
\label{r no pos}
While the product $s_\mu \fs^{(k)}_\nu(\mathbf{x};1)$
is  $k$-Schur positive
for any  $\mu \in \Par^{k-r+1}_r$ and $\nu \in \Par^k$ (see Section \ref{s gromov witten}),
positivity often fails for its $t$-analog $\bb_\mu \, \fs^{(k)}_\nu$\,:
$\bb_{1} \mysquareb \fs^{(4)}_3 = t^3\fs^{(4)}_4 +t^2\fs^{(4)}_{31}+(t-1)\fs^{(4)}_{22}$.
We believe that for $\bb_\mu \, \fs^{(k)}_\nu$ to be  $k$-Schur positive,
the condition that  $\mu\nu$ is a partition is
close to best possible.
This example also shows that positivity can fail even if all  $k$-Schur functions are Schur functions.
\end{remark}

\begin{example}
\label{ex schur times kschur}
Let  $k = 6$, $r=3$, $\mu = 432$,  $\nu = 211111$.  Then  $U = 444$  and
\[\SSYT_{U/\mu}([r]) =\fontsize{6pt}{5pt}\selectfont
\Bigg\{\
\tableau{
\bl& \fr[r] \\
\bl& 1 \\
 1 & 2
}
\quad \tableau{
\bl& \fr[r] \\
\bl& 1 \\
 1 & 3
}
\quad \tableau{
\bl& \fr[r] \\
\bl& 2 \\
 1 & 3
}
\quad \tableau{
\bl& \fr[r] \\
\bl& 1 \\
2& 2
}
\quad\tableau{
\bl& \fr[r] \\
\bl& 1 \\
 2 & 3
}
\quad \tableau{
\bl& \fr[r] \\
\bl& 2 \\
 2 & 3
}
\quad \tableau{
\bl& \fr[r] \\
\bl& 1 \\
 3 & 3
}
\quad \tableau{
\bl& \fr[r] \\
\bl& 2 \\
3 & 3
}\
\Bigg\}.
 \]
Theorem \ref{t schur kschur} gives
\[\bb_\mu \mysquareb \fs^{(k)}_\nu = \fs^{(k)}_{U\nu} \cdot \big( u_{121} + u_{131} + u_{132} + u_{221} + u_{231} + u_{232} + u_{331} + u_{332} \big). \]
This yields the strong tableaux (note $U\nu = 444211111  = \p(755311111)$)
\begin{align*}
\begin{array}{ccccccc}
\!\!\!\!\!\!T
& \!{\fontsize{5.8pt}{4.8pt}\selectfont \tableau{~&~&~&~&~&\crc{1}&\crc{3}\\~&~&~&~&~\\~&~&~&~&\crc{2}\\~&1&3\\~\\~\\~\\~\\3\\}}
& \! {\fontsize{5.8pt}{4.8pt}\selectfont \tableau{~&~&~&~&~&~&\crc{1}\\~&~&~&~&~\\~&~&~&\crc{2}&\crc{3}\\~&~&1\\~\\~\\~\\~\\1\\}}
& \! {\fontsize{5.8pt}{4.8pt}\selectfont \tableau{~&~&~&~&~&~&\crc{3}\\~&~&~&~&\crc{1}\\~&~&~&~&\crc{2}\\~&~&3\\~\\~\\~\\~\\3\\}}
& \! {\fontsize{5.8pt}{4.8pt}\selectfont \tableau{~&~&~&~&~&1\fr[l,t,b]&\crc{1}\fr[r,t,b]\\~&~&~&~&~\\~&~&~&\crc{2}&\crc{3}\\~&1\fr[l,t,b]&1\fr[r,t,b]\\~\\~\\~\\~\\~\\}}
& \! {\fontsize{5.8pt}{4.8pt}\selectfont \tableau{~&~&~&~&~&3\fr[l,t,b]&\crc{3}\fr[r,t,b]\\~&~&~&~&\crc{1}\\~&~&~&~&\crc{2}\\~&3\fr[l,t,b]&3\fr[r,t,b]\\~\\~\\~\\~\\~\\}}
& \! {\fontsize{5.8pt}{4.8pt}\selectfont \tableau{~&~&~&~&~&~&~\\~&~&~&~&\crc{1}\\~&~&~&\crc{2}&\crc{3}\\~&~&~\\~\\~\\~\\~\\~\\}}
\\[16mm]
\!\!\!\!\!\! \spin(T) & \!\! 3 & \!\! 2 & \!\! 2 &\!\! 1 &\!\! 1 &\!\! 0
\end{array}
\end{align*}
and summing $t^{\spin(T)} \fs^{(k)}_{\inside(T)}$ over these tableaux  gives
\begin{align}
\label{e Schur times kSchur}
 \bb_\mu \mysquareb \fs^{(k)}_\nu =
\bb_{432} \mysquareb \fs^{(6)}_{21^5} =
t^3 \fs^{(6)}_{4431^5} +
t^2 \fs^{(6)}_{44221^4} +
t^2 \fs^{(6)}_{43321^4} +
t \, \fs^{(6)}_{4421^6} +
t \, \fs^{(6)}_{4331^6} +
 \fs^{(6)}_{43221^5}.
\end{align}
\end{example}

A notable special case of Theorem \ref{t schur kschur}, namely, when $r = 1$ and then  $\mu = (d)$ with  $d\le k$, 
relates to the Pieri rule for $k$-Schur functions.

\begin{corollary}
\label{c Br on k schur}
Let $d \le k$ be positive integers and $\nu \in \Par^d$.
Then
\begin{align}
\label{ec Br on kschur}
\bb_d \mysquareb \fs^{(k)}_\nu = \fs^{(k)}_{(k, \nu)} \cdot u_{1}^{k-d}.
\end{align}
\end{corollary}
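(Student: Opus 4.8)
The plan is to derive Corollary~\ref{c Br on k schur} directly from Theorem~\ref{t schur kschur} by specializing $r=1$ and $\mu=(d)$ with $d\le k$, so that $k-r+1 = k$ and hence the hypothesis $\mu\in\Par^{k-r+1}_r = \Par^k_1$ becomes exactly $d\le k$, while $\nu\in\Par^d\subset\Par^k$ guarantees that $\mu\nu=(d,\nu)$ is a partition (here $d \le k$ but more to the point $d \ge \nu_1$ is needed for $(d,\nu)$ to be a partition, which is precisely the content of $\nu\in\Par^d$). Thus the left-hand side of \eqref{et schur kschur} is $\bb_{(d)}\mysquareb\fs^{(k)}_\nu = \bb_d\mysquareb\fs^{(k)}_\nu$, and $U = (k-r+1)^r = (k)$, a single row of length $k$.

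The remaining task is to identify the right-hand side sum. With $U=(k)$ a one-row shape and $\mu=(d)$, the skew shape $U/\mu = (k)/(d)$ is a single horizontal strip of $k-d$ boxes in row $1$, and $\SSYT_{U/\mu}([r]) = \SSYT_{(k)/(d)}([1])$ contains exactly one tableau: the row of $k-d$ boxes all filled with the letter $1$ (since the alphabet $[r]=[1]=\{1\}$ and the shape is a single row, there is no choice). For this unique tableau $T$, the column reading word $\creading(T)$ is the concatenation of its $k-d$ length-one columns, each contributing the single letter $1$, so $\creading(T) = 1^{k-d} = \underbrace{1\cdots 1}_{k-d}$, whence $u_{\creading(T)} = u_1^{k-d}$. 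Finally $U\nu = (k,\nu)$, so the single summand is $\fs^{(k)}_{(k,\nu)}\cdot u_1^{k-d}$, which is exactly the right-hand side of \eqref{ec Br on kschur}.

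There is essentially no obstacle here: the corollary is a transparent specialization of Theorem~\ref{t schur kschur}, and the only points to check are the bookkeeping items above --- that $k-r+1=k$ when $r=1$, that $\SSYT_{(k)/(d)}([1])$ is a singleton, and that its column reading word is $1^{k-d}$ --- all of which are immediate from the definitions of $U$, $\SSYT_\theta(\A)$, and $\creading$ recalled before Theorem~\ref{t HL to k schur intro}. One might optionally remark (as the surrounding text does) that at $t=1$ this recovers a known Pieri-type rule, but that is not needed for the proof. The write-up will therefore be a short paragraph: invoke Theorem~\ref{t schur kschur} with the stated specialization, observe the index set $\SSYT_{(k)/(d)}([1])$ has one element with reading word $1^{k-d}$, and read off \eqref{ec Br on kschur}.
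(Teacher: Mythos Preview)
Your proof is correct and matches the paper's approach exactly: the paper presents Corollary~\ref{c Br on k schur} simply as the $r=1$, $\mu=(d)$ special case of Theorem~\ref{t schur kschur}, and your verification of the hypotheses and identification of the unique tableau in $\SSYT_{(k)/(d)}([1])$ with reading word $1^{k-d}$ is precisely the bookkeeping needed.
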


Even this special (Pieri) case of Theorem \ref{t schur kschur} is new and significant.  
Its specialization at $t=1$ was previously known, but even here the story is interesting:
the  $k$-Pieri rule from \cite{LMktableau} expresses $h_d \sone^{(k)}_{\nu}$ in the $k$-Schur basis $\{\sone^{(k)}_{\mu}\}$ using weak tableaux.
On the other hand, \eqref{ec Br on kschur} at  $t=1$ becomes $h_d \sone^{(k)}_{\nu} = \sone^{(k)}_{(k,\nu)} \cdot u_1^{k-d}|_{t=1}$,
which (when  $d \ge \nu_1$) agrees
with a formulation of the  $k$-Pieri rule from \cite[Corollary 16]{DalalMorse};
only after some work (done in \cite[Section 4]{DalalMorse}) can it be shown that these versions of the  $k$-Pieri rule indeed compute the same thing.

\section{$k$-split polynomials}
\label{s ksplit into kschur}

As mentioned in the introduction,
Conjecture \ref{cj kschur pos} predicts that
 for any tuple  $(\lambda^1, \dots, \lambda^d)$ of partitions which concatenate to a partition,
$\bb_{\lambda^1} \mysquare \bb_{\lambda^2}\mysquare  \cdots \mysquare \bb_{\lambda^{d-1}} \mysquareb s_{\lambda^d}$
is  $k$-Schur positive for $k = \max\{(\lambda^i)_1 + \ell(\lambda^i)-1 \mid i \in [d]\}$,
strengthening the corresponding Schur positivity conjecture of Broer and Shimozono-Weyman.
We resolve this (stronger) conjecture for the $k$-split polynomials,
a basis for  $\Lambda^k$ 
introduced in~\cite{LMksplit} to define the  
$k$-Schur candidate  $\{\tilde{A}^{(k)}_\mu \}$.
As a corollary, we deduce that the  $\tilde{A}^{(k)}_\mu$ agree with the  $k$-Schur Catalan functions.

\subsection{$k$-split polynomials are  $k$-Schur positive}
\label{ss ksplit into kschur}

For $\lambda \in \Par^k$,
the \emph{$k$-split of $\lambda$} is
 the sequence $\lambda^{\rightarrow k} := (\lambda^{1}, \lambda^{2}, \dots , \lambda^{d})$
obtained
by decomposing $\lambda$ (without rearranging entries)
into partitions
$\lambda^{i} \in \Par^{k}_{r_i}$ so that $r_i = k-(\lambda^i)_1+1$ and  $(\lambda^d)_1 > 0$.  Thus the maximum hook length of $\lambda^{i}$ is $k$ for all $i < d$
and $\lambda^{d}$ is padded with zeros so that
it has a total of $k-(\lambda^{d})_1+1$ entries. We adopt the convention that the  $k$-split of the empty partition is the empty list of partitions ($d= 0$).


For example, $(322211)^{\rightarrow 3} = (3,22,21,100)$
and $(322211)^{\rightarrow 4} =(32,22,1000)$:
\vspace{-2mm}

\begin{tikzpicture}[xscale = 1.6, yscale = .68]
\node at (-1.25,-.25) {\fontsize{3.8pt}{3.8pt}\selectfont $
\tableau{
 & & \\
 & \\
 & \\
 & \\
 \\
}$ };
\node at (-.65,-.25) {$\rightarrow^{(3)}$};
\node at (0,1.5) {\fontsize{3.8pt}{3.8pt}\selectfont$\tableau{\bl \\ & & }$};
\node at (0,.5) {\fontsize{3.8pt}{3.8pt}\selectfont $\tableau{ & &\bl \\  & }$};
\node at (0,-.5) {\fontsize{3.8pt}{3.8pt}\selectfont $\tableau{ & &\bl\\  & \bl}$};
\node at (0,-1.675) {\fontsize{3.8pt}{3.8pt}\selectfont $\tableau{ &\bl &\bl \\ \fr[l] \\ \fr[l]}$};
\node at (2,-.5) {\fontsize{3.8pt}{3.8pt}\selectfont
$\tableau{
 & &\\
 & \\
 & \\
 & \\
  \\
}$};
\node at (2.6,-.25) {$\rightarrow^{(4)}$};
\node at (3.25,1) {\fontsize{3.8pt}{3.8pt}\selectfont$\tableau{
\bl \\
 & &\\
 & \\}$};
\node at (3.25,0) {\fontsize{3.8pt}{3.8pt}\selectfont$\tableau{
 \bl \\
 & \\
 & & \bl }$};
\node at (3.25,-1.625) {\fontsize{3.8pt}{3.8pt}\selectfont$\tableau{
 &\bl & \bl \\
 \fr[l]\\
 \fr[l]\\
 \fr[l]
}$};
\node at (-3.4,0) {\ };
\end{tikzpicture}
\vspace{-2mm}

\begin{definition}
For $\lambda \in \Par^k$ with  $k$-split $\lambda^{\rightarrow k} = (\lambda^{1}, \lambda^{2}, \dots, \lambda^{d})$,
the \emph{$k$-split polynomial} indexed by  $\lambda$
is
\begin{align*}
G^{(k)}_\lambda &:= \bb_{\lambda^1} \mysquare \bb_{\lambda^2}\mysquare  \cdots \mysquare \bb_{\lambda^{d-1}} \mysquareb s_{\lambda^d} \, \in \Lambda \, .
\end{align*}
\end{definition}


\begin{theorem}
\label{t k split to k schur}
 Let  $\lambda \in \Par^k$ and $\lambda^{\rightarrow k} = (\lambda^1, \dots, \lambda^d)$ be the $k$-split of $\lambda$.
For each  $i \in [d]$, let $r_i = k-(\lambda^i)_1+1$, $U^i = (k-r_i+1)^{r_i}$, $\theta^i =  U^i/ \lambda^i$, and
$N_i \subset \ZZ_{\ge 1}$ be the interval of length  $r_i$ such that the restriction of $\lambda$ to positions
$N_i$ is $\lambda^i$.
Set $U = U^1 \cdots U^d$, the concatenation of the  $U^i$.
The $k$-Schur expansion of the $k$-split polynomial $G^{(k)}_{\lambda}$ is given by
\begin{align}
\label{et k split to k schur}
G^{(k)}_{\lambda} = \fs^{(k)}_{U}
\cdot \bigg( \sum_{T \in \SSYT_{\theta^d}(N_d)} \! \!\!\! u_{\creading(T)}\bigg)
\cdots
\bigg( \sum_{T \in \SSYT_{\theta^2}(N_{2})} \!\!\!\! u_{\creading(T)}\bigg)
\bigg( \sum_{T \in \SSYT_{\theta^1}(N_1)} \!\!\!\! u_{\creading(T)}\bigg).
\end{align}
\end{theorem}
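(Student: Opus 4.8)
The plan is to prove the formula by induction on the number $d$ of parts of the $k$-split $\lambda^{\rightarrow k}=(\lambda^1,\dots,\lambda^d)$. The base cases are immediate: for $d=0$ both sides equal $1$, and for $d=1$ we have $G^{(k)}_\lambda=s_{\lambda^1}$, and the claimed identity is precisely Theorem~\ref{t schur kschur} applied with $\mu=\lambda^1\in\Par^{k-r_1+1}_{r_1}$ and $\nu=\emptyset$, using that $\bb_{\lambda^1}\mysquareb 1=H(\varnothing_{r_1};\lambda^1)=s_{\lambda^1}$.

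For the inductive step, let $\lambda'$ be obtained from $\lambda$ by deleting its first $r_1$ parts. Since the $k$-split is computed greedily from the left, $(\lambda')^{\rightarrow k}=(\lambda^2,\dots,\lambda^d)$, the rectangle associated to $\lambda'$ is $U'=U^2\cdots U^d$, and the interval $N_i$ attached to the block $\lambda^i$ in $\lambda$ is the corresponding interval for $\lambda'$ shifted up by $r_1$. From the definition of $G$ we have $G^{(k)}_\lambda=\bb_{\lambda^1}\mysquareb G^{(k)}_{\lambda'}$, so I would apply the inductive hypothesis to rewrite $G^{(k)}_{\lambda'}$ as $\fs^{(k)}_{U'}$ acted on (from the right) by the product of the sums $\sum_{T\in\SSYT_{\theta^i}(N_i-r_1)}u_{\creading(T)}$ for $i=d,d-1,\dots,2$.

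The key step is then to push $\bb_{\lambda^1}\mysquareb(-)$ to the left past this entire product of strong Pieri operators. This is exactly what Lemma~\ref{l commute with square} is designed to do: because $\bb_{\lambda^1}$ amounts to prepending the $r_1$ rows of the first block, moving it past a strong Pieri operator $u_p$ raises the index to $u_{p+r_1}$, converting each alphabet $N_i-r_1$ back into $N_i$. After this commutation the problem reduces to evaluating $\bb_{\lambda^1}\mysquareb\fs^{(k)}_{U'}$, and here I would invoke Theorem~\ref{t schur kschur} with $\mu=\lambda^1$ and $\nu=U'$ (the concatenation $\lambda^1 U'$ is a partition since consecutive parts of $\lambda$ weakly decrease and $(U')_1=(\lambda^2)_1$), obtaining $\fs^{(k)}_{U^1U'}\cdot\sum_{T\in\SSYT_{U^1/\lambda^1}([r_1])}u_{\creading(T)}=\fs^{(k)}_{U}\cdot\sum_{T\in\SSYT_{\theta^1}(N_1)}u_{\creading(T)}$, using $U^1U'=U$, $U^1/\lambda^1=\theta^1$, and $[r_1]=N_1$. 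Assembling the pieces expresses $G^{(k)}_\lambda$ as $\fs^{(k)}_U$ acted on by the block-$1$ sum followed by the block-$d,\dots,$ block-$2$ sums; since the strong Pieri operators in the block-$1$ sum use row indices in $N_1=[r_1]$ while those in the later blocks use indices $>r_1$, a final commutation of the order-separated operators rearranges this into the stated order $\fs^{(k)}_U\cdot\big(\sum_{T\in\SSYT_{\theta^d}(N_d)}u_{\creading(T)}\big)\cdots\big(\sum_{T\in\SSYT_{\theta^2}(N_2)}u_{\creading(T)}\big)\big(\sum_{T\in\SSYT_{\theta^1}(N_1)}u_{\creading(T)}\big)$.

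I expect the genuine difficulty to lie in Lemma~\ref{l commute with square}, which captures the compatibility of the Garsia--Jing/vertex operators with the strong Pieri operators and is proved separately; its proof should run through the Catalan-operator description \eqref{ec flm k schur} of the strong Pieri operators together with the combinatorics of strong marked covers. The rest is bookkeeping: tracking the alphabets $N_i$ through the shifts, checking at each stage that the hypotheses of Lemma~\ref{l commute with square} and Theorem~\ref{t schur kschur} hold (in particular that the words appearing have letters bounded by the lengths of the partitions on which they act), and justifying the final commutation of widely separated strong Pieri operators, for which it is cleanest to work inside the finite-length subspace $\Span_{\ZZ[t]}\{\fs^{(k)}_\mu:\mu\in\Par^k_{\ell(U)}\}$ where Theorem~\ref{t u perp} and the results of \cite{BMPS} are available.
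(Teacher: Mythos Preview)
Your outline has the right ingredients but applies them in an order that creates two genuine gaps. First, Lemma~\ref{l commute with square} is stated only for $\bb_U$ with $U$ a $k$-rectangle; it does not let you commute $\bb_{\lambda^1}$ past strong Pieri operators. Second, and more seriously, your ``final commutation'' --- moving the block-$1$ sum (letters in $[r_1]$) past the block-$d,\dots,2$ sums (letters $>r_1$) --- is not justified: the strong Pieri operators $u_p$ do not commute, and there is no result in the paper or in \cite{BMPS} saying that $u_p$ and $u_q$ commute on $\fs^{(k)}_U$ merely because $p\le r_1<q$. Index separation is irrelevant; a single $u_p$ can move mass into rows far below $p$ via strong covers.

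The paper avoids both problems by reversing the order of the two key steps and by proving a strengthened statement by induction. One first applies Theorem~\ref{t schur kschur} termwise to $\bb_{\lambda^1}\big(\sum_\nu c_\nu\,\fs^{(k)}_\nu\big)$, where $\sum_\nu c_\nu\,\fs^{(k)}_\nu$ is the $k$-Schur expansion of $G^{(k)}_{\lambda'}$; since the block-$1$ sum $\sum_{T\in\SSYT_{\theta^1}(N_1)}u_{\creading(T)}$ is independent of $\nu$, this peels it off on the right and replaces $\bb_{\lambda^1}$ by $\bb_{U^1}$. Only then does one invoke Lemma~\ref{l commute with square} --- now legitimately, for the rectangle $U^1$ --- to shift the remaining Pieri operators by $r_1$. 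This yields the correct order $P_d\cdots P_2\cdot P_1$ directly, with no post-hoc commutation needed. To make this work one must also verify the hypothesis~\eqref{e commute with square hyp} of Lemma~\ref{l commute with square}, namely that all partial products stay in $\Lambda^{k,k-r_1+1}$; this is why the paper carries the strengthened claim $\fs^{(k)}_{U}\cdot u_{w_1}\cdots u_{w_j}\in\Omega^{k,\lambda_1}$ through the induction (using Lemma~\ref{c height 1 ribbon} for the new block), something your outline does not track. Applying Theorem~\ref{t schur kschur} termwise also requires $\lambda^1\nu$ to be a partition for each $\nu$ appearing, i.e.\ $\nu_1\le(\lambda^1)_{r_1}$, which is again part of the strengthened inductive hypothesis (specifically $\nu_1=(\lambda^2)_1$).
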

The proof, given in Section \ref{s ksplit proofs}, is by induction on $d$ using Theorem \ref{t schur kschur}.

\subsection{$k$-Schur functions via $k$-split polynomials}
\label{ss kschur via ksplit}

By \cite[Property 29]{LMksplit}, the  $k$-split polynomials $\{G^{(k)}_\lambda \mid \lambda \in  \Par^k \}$ form a $\ZZ[t]$-basis of $\Lambda^k$.
We can thus write $\Lambda^k$ as the direct sum of its free  $\ZZ[t]$-submodules
\begin{align}
\label{e k split space 1}
\tilde{\Omega}^{k,a} &:= \Span_{\ZZ[t]} \{G^{(k)}_\lambda \mid \lambda \in  \Par^k, \, \lambda_1 = a\}
\end{align}
over $0 \le a \le k$; note that $\tilde{\Omega}^{k,0} := \Span_{\ZZ[t]}\{G^{(k)}_\emptyset \} $ and $G^{(k)}_\emptyset =1$.
Let $\pi^{k,d} : \Lambda^k \to \Lambda^k$ denote the projection with kernel $\bigoplus_{a > d} \tilde{\Omega}^{k,a}$ and which is the identity on
$\bigoplus_{a \le d} \tilde{\Omega}^{k,a}$.
A family of symmetric functions $\{\tilde{A}^{(k)}_\mu \mid \mu \in \Par^k \} \subset \Lambda$
were introduced and conjectured to be  $k$-Schur functions in \cite{LMksplit}.
They are defined inductively by
\begin{align}
\label{ed k split}
\tilde{A}^{(k)}_\mu = \begin{cases}
  \pi^{k,\mu_1} \big( \bb_{\mu_1} \mysquareb \tilde{A}^{(k)}_{(\mu_2, \dots, \mu_\ell)} \big) & \text{ if $\ell = \ell(\mu) > 0$},\\
1  & \text{ if $\mu = \emptyset$}.
\end{cases}
\end{align}

This  $k$-Schur candidate agrees with  the  $k$-Schur Catalan functions (proof in Section~\ref{s ksplit proofs}).

\begin{theorem}
\label{t k split eq catalan}
For all  $\mu \in \Par^k$, $\tilde{A}^{(k)}_\mu = \fs^{(k)}_{\mu} $.
\end{theorem}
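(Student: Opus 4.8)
The plan is to prove Theorem~\ref{t k split eq catalan} by induction on $\ell = \ell(\mu)$, showing simultaneously that the recursion \eqref{ed k split} defining $\tilde{A}^{(k)}_\mu$ is satisfied by the $k$-Schur Catalan functions $\fs^{(k)}_\mu$. The base case $\mu = \emptyset$ is immediate since $\fs^{(k)}_\emptyset = 1 = \tilde{A}^{(k)}_\emptyset$. For the inductive step, set $\nu = (\mu_2, \dots, \mu_\ell)$ and assume $\tilde{A}^{(k)}_\nu = \fs^{(k)}_\nu$; then it suffices to show
\[
\fs^{(k)}_\mu = \pi^{k,\mu_1}\big(\bb_{\mu_1}\mysquareb \fs^{(k)}_\nu\big).
\]
Since the projections $\pi^{k,d}$ are defined relative to the $k$-split decomposition $\Lambda^k = \bigoplus_{a} \tilde{\Omega}^{k,a}$, the heart of the argument is to understand where the terms of $\bb_{\mu_1}\mysquareb \fs^{(k)}_\nu$ land in this decomposition, and to show that exactly one term survives the projection, namely $\fs^{(k)}_\mu$, which must then have $(\fs^{(k)}_\mu)$-component lying in $\tilde{\Omega}^{k,a}$ for some $a \le \mu_1$, in fact $a = \mu_1$.

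First I would expand $\bb_{\mu_1}\mysquareb \fs^{(k)}_\nu$ using Theorem~\ref{t schur kschur} (or its Pieri special case Corollary~\ref{c Br on k schur}) applied with $r = 1$, $\mu = (\mu_1)$: writing $U = k^1 = (k)$, this gives a positive combinatorial expansion $\bb_{\mu_1}\mysquareb \fs^{(k)}_\nu = \fs^{(k)}_{(k,\nu)}\cdot u_1^{k-\mu_1}$, hence an expansion $\sum_{T} t^{\spin(T)} \fs^{(k)}_{\inside(T)}$ over strong tableaux $T \in \SMT^k(1^{k-\mu_1}\,;\,(k,\nu))$. The next step is to relate membership of each $\fs^{(k)}_{\inside(T)}$ in the filtration pieces $\tilde{\Omega}^{k,\le a} := \bigoplus_{b \le a}\tilde{\Omega}^{k,b}$ to a condition on the shape $\inside(T)$. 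Using Theorem~\ref{t k split to k schur} — which gives the $k$-Schur expansion of the $k$-split polynomials $G^{(k)}_\lambda$, which form the basis defining the $\tilde{\Omega}^{k,a}$ — one sees that $\tilde{\Omega}^{k,\le a}$ has a $k$-Schur-stable description; concretely, I expect that $\tilde{\Omega}^{k,\le a} = \Span_{\ZZ[t]}\{\fs^{(k)}_\lambda \mid \lambda_1 \le a\}$, i.e.\ the $k$-split filtration and the $k$-Schur "first-part" filtration coincide. This reduces the theorem to a purely combinatorial statement about strong tableaux: among all $T \in \SMT^k(1^{k-\mu_1}\,;\,(k,\nu))$, exactly one has $\inside(T)_1 \le \mu_1$, namely the one with $\inside(T) = \mu$ and $\spin(T) = 0$, while all others have $\inside(T)_1 > \mu_1$.

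The main obstacle — and where the real work lies — is establishing this last combinatorial claim about strong marked covers, i.e.\ that applying $u_1$ exactly $k-\mu_1$ times to $\p^{-1}(k,\nu)$ produces exactly one strong tableau whose outside-to-inside evolution decreases the first part all the way down to $\mu_1$ (with zero spin), and that every other such tableau leaves the first part strictly above $\mu_1$. This should follow from a careful analysis of how strong marked covers with mark $r=1$ act on $k+1$-cores: removing a "row-$1$ marked strong cover" corresponds to a controlled modification of the core, and iterating $k - \mu_1$ times from the core of $(k,\nu)$ either peels off the relevant ribbons in the unique spin-zero way giving $\p = \mu$, or else strictly increases the first part of the resulting bounded partition. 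One clean way to organize this: use \eqref{ec flm k schur}, which identifies $\fs^{(k)}_\lambda \cdot u_1$ with the Catalan function $H(\Delta^k(\lambda)\,;\,\lambda - \epsilon_1)$, and track the $\style$-statistic and the effect of the $e_\ell^\perp$-type operators to pin down which shapes arise; combined with the already-proved identity $\tilde{A}^{(k)}_\mu = \fs^{(k)}_\mu$ this is consistent, but making the projection argument rigorous requires knowing the filtration compatibility precisely. I would isolate "the $k$-split filtration $\tilde{\Omega}^{k,\le a}$ equals $\Span\{\fs^{(k)}_\lambda : \lambda_1 \le a\}$" as a lemma, prove it from Theorem~\ref{t k split to k schur} by a triangularity/degree argument on the first part (noting that $G^{(k)}_\lambda$ expands into $\fs^{(k)}_U$ times strong-Pieri monomials, and $U_1 = \lambda_1$ while all strong Pieri operators only decrease shapes), and then the projection in \eqref{ed k split} becomes simply "discard $\fs^{(k)}_\lambda$ with $\lambda_1 > \mu_1$", at which point the theorem follows from the combinatorial claim above.
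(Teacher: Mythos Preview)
Your proposal is correct and takes essentially the same approach as the paper: induction on $\ell(\mu)$, expansion of $\bb_{\mu_1}\fs^{(k)}_{\hat\mu}$ via Corollary~\ref{c Br on k schur}, the identification $\tilde{\Omega}^{k,a}=\Omega^{k,a}$ (which the paper reads off directly from the strengthened claim in the proof of Theorem~\ref{t k split to k schur}), and then projection. One caution: your assertion that ``strong Pieri operators only decrease shapes'' is false in general (see the remark preceding Corollary~\ref{c expand downpath}), and accordingly the paper does not handle your ``main obstacle'' by direct analysis of strong covers but instead applies Corollary~\ref{c expand downpath} with $r=1$ iteratively to obtain $\fs^{(k)}_{(k,\hat\mu)}\cdot u_1^{k-\mu_1}=\fs^{(k)}_\mu+\sum_{\nu_1>\mu_1}c_\nu\,\fs^{(k)}_\nu$ via the Catalan-function machinery.
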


Hence, by combining this with \cite[Theorem 2.4]{BMPS}, we have identified three of the $k$-Schur candidates  (Theorem \ref{c three defs agree}).
Properties of each are thus acquired by the others.
As one application, the following \emph{$k$-rectangle property} was proven for the $\tilde{A}^{(k)}_\mu$
\linebreak[3] \cite[Theorem 3]{LMirred}, so it is now established
for the strong tableau  $k$-Schur functions and  $k$-Schur Catalan functions.
A \emph{$k$-rectangle} is a partition of the form $(k-r+1)^r$ for $r \in [k]$.

\begin{corollary}
\label{c k rectangle}
Let  $U = (k-r+1)^r$ be a  $k$-rectangle and  $\mu \in \Par^k$.
Then
\[\bb_U \mysquareb s^{(k)}_{\mu} = \bb_U \mysquareb \fs^{(k)}_{\mu} = \bb_U \mysquareb \tilde{A}^{(k)}_{\mu}= t^d \tilde{A}^{(k)}_{U \sqcup \mu} = t^d \fs^{(k)}_{U \sqcup \mu} = t^d s^{(k)}_{U \sqcup \mu},\]
where $d$ is the number of boxes in the diagram of  $\mu$ in columns  $> r$,
and $U \sqcup \mu$ denotes the partition rearrangement of the parts of $U$ and $\mu$.
\end{corollary}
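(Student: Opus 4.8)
The plan is to derive the chain of equalities by combining the $k$-rectangle property already established for the $\tilde{A}^{(k)}_\mu$ with the identification of the three $k$-Schur candidates. First, by Theorem \ref{t k split eq catalan} we have $\fs^{(k)}_\mu = \tilde{A}^{(k)}_\mu$ for all $\mu \in \Par^k$, and by Theorem \ref{c three defs agree} (equivalently \cite[Theorem 2.4]{BMPS} together with Theorem \ref{t k split eq catalan}) we also have $s^{(k)}_\mu = \fs^{(k)}_\mu = \tilde{A}^{(k)}_\mu$. Since the operators $\bb_U$ act the same way regardless of which name we use for these functions, the equalities $\bb_U \mysquareb s^{(k)}_\mu = \bb_U \mysquareb \fs^{(k)}_\mu = \bb_U \mysquareb \tilde{A}^{(k)}_\mu$ and $t^d \tilde{A}^{(k)}_{U \sqcup \mu} = t^d \fs^{(k)}_{U \sqcup \mu} = t^d s^{(k)}_{U \sqcup \mu}$ are immediate. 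So the entire corollary reduces to the single middle equality $\bb_U \mysquareb \tilde{A}^{(k)}_\mu = t^d \tilde{A}^{(k)}_{U \sqcup \mu}$, which is precisely the content of the $k$-rectangle property proved in \cite[Theorem 3]{LMirred} for the $\tilde{A}^{(k)}_\mu$; one simply cites it, after checking that the normalization of $d$ there (number of boxes of $\mu$ in columns $> r$) matches the statement here.

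Alternatively — and this is the route I would actually prefer to make the paper self-contained — one can prove the middle equality directly from Theorem \ref{t schur kschur}, avoiding reliance on \cite{LMirred}. Write $U = (k-r+1)^r$ and note that $U$, being a single $k$-rectangle, is its own $k$-split (with $d=1$ in the notation of that decomposition, since $r = k - (U)_1 + 1$). Then $\bb_U \mysquareb \fs^{(k)}_\mu$ is exactly $\bb_{\lambda^1} \mysquareb \fs^{(k)}_\nu$ with $\lambda^1 = U$ and $\nu = \mu$, which is covered by Theorem \ref{t schur kschur} provided $U\mu$ is a partition — but in general $U\mu$ need not be a partition. To handle the general $\mu$, the idea is to use the composition law for Catalan operators (Proposition \ref{p Catalan box times}) to reorder: $\HHH^{\Delta^+_r}_U \HHH^{\Delta^k(\mu)}_\mu = \HHH^{\Delta^+_r \uplus \Delta^k(\mu)}_{U\mu}$, and then identify the root ideal $\Delta^+_r \uplus \Delta^k(\mu)$ together with the weight $U\mu$ with a $k$-Schur Catalan function indexed by $U \sqcup \mu$ after the appropriate rearrangement, up to the power $t^d$ coming from the raising-operator corrections needed to pass from the unsorted weight $U\mu$ to the sorted partition $U \sqcup \mu$. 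This rearrangement analysis — tracking exactly which factors $(1 - tR_{ij})^{-1}$ are introduced and how the exponent $d$ emerges as the number of boxes of $\mu$ past column $r$ — is the genuine combinatorial heart of the argument.

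The main obstacle is therefore this last step: showing that $\HHH^{\Delta^+_r \uplus \Delta^k(\mu)}_{U\mu} \cdot 1 = H(\Delta^+_r \uplus \Delta^k(\mu); U\mu) = t^d \fs^{(k)}_{U \sqcup \mu}$. One must verify that the style function $\style(\Delta^+_r \uplus \Delta^k(\mu), U\mu)_i \le k$ for all $i$ (so that Proposition \ref{p style k space} places this in $\Lambda^k$ and the $k$-Schur expansion makes sense), and then run a raising-operator straightening computation — sorting the weight $U\mu$ into the partition $U \sqcup \mu$ while keeping track of the root ideal — to pin down both the index $U \sqcup \mu$ and the scalar $t^d$. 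The cleanest organization is probably to avoid this entirely on a first pass by invoking \cite[Theorem 3]{LMirred}, and to defer the Catalan-function-theoretic reproof to a remark or to the proofs section; in either case the logical skeleton is: (1) the three candidates coincide by Theorems \ref{c three defs agree} and \ref{t k split eq catalan}; (2) hence all the "$s^{(k)}$ versus $\fs^{(k)}$ versus $\tilde{A}^{(k)}$" equalities collapse; (3) the remaining equality is the $k$-rectangle property, known for $\tilde{A}^{(k)}$.
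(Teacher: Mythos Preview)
Your primary approach is correct and is exactly what the paper does: the sentence immediately preceding the corollary states that the $k$-rectangle property was proven for the $\tilde{A}^{(k)}_\mu$ in \cite[Theorem~3]{LMirred}, so once Theorem~\ref{c three defs agree} identifies the three candidates, the chain of equalities follows.

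One correction to your alternative self-contained route: you write $\bb_U = \HHH^{\Delta^+_r}_U$, but in fact $\bb_\alpha = \HHH^{\varnothing}_\alpha$ (empty root ideal) while it is $\tilde{\bb}_\alpha$ that equals $\HHH^{\Delta^+}_\alpha$; so the composition should read $\HHH^{\varnothing_r}_U \HHH^{\Delta^k(\mu)}_\mu = \HHH^{\varnothing_r \uplus \Delta^k(\mu)}_{U\mu}$, giving $H(\varnothing_r \uplus \Delta^k(\mu); U\mu)$. The paper's Lemma~\ref{l multiply by rectangle easy} carries out precisely this kind of identification, but only in the easy case $\nu_1 \le k-r+1$ where $U\nu$ is already a partition and no $t$-power appears. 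For general $\mu$ the straightening you describe---sorting $U\mu$ into $U\sqcup\mu$ and extracting $t^d$---is essentially the full content of \cite[Theorem~3]{LMirred}, so your alternative would not in fact avoid that dependence without substantial new work. Your instinct to cite \cite{LMirred} and move on is the right one.
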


\section{Gromov-Witten invariants}
\label{s gromov witten}
Using a result of Lam-Shimozono \cite{LStoda},
we identify Gromov-Witten invariants with certain $k$-Catalan-Kostka coefficients at $t=1$ and apply
Theorem \ref{t schur kschur} to give a positive combinatorial formula for a class of these invariants (Theorem \ref{cor:gw2schurtimeskschur}).


We briefly introduce the quantum cohomology ring; further details can be found in \cite{FGP,Kim,RuanTian}.
Let $\Fl_{k+1}$ be the variety of complete flags in $\CC^{k+1}$.
Its cohomology ring  $H^*(\Fl_{k+1}) = H^*(\Fl_{k+1}, \CC)$ has a basis of Schubert classes  $\sigma_w$ indexed
by permutations $w\in \SS_{k+1}$.
The (small) \emph{quantum cohomology ring} $QH^* (\Fl_{k+1})$ is a commutative and associative algebra over
$\CC[{\mathbf q}]:=\CC[q_1,\ldots,q_{k}]$.
As a  $\CC[\mathbf{q}]$-module, $QH^*(\Fl_{k+1})= $ $\CC[{\mathbf q}]\otimes H^*(\Fl_{k+1})$
and thus has a $\CC[{\mathbf q}]$-basis
of Schubert classes $\{\sigma_w\}_{w \in \SS_{k+1}}$. The
multiplication~$*$ in  $QH^*(\Fl_{k+1})$ is determined by
\begin{equation}
\label{eq:gw}
  \sigma_u * \sigma_v = \sum_{w \in \SS_{k+1}} \sum_{\dd \in \ZZ_{\ge 0}^k} c_{u \hspace{.3mm} v}^{w,\dd }\, \mathbf{q}^{\mathbf d}\, \sigma_w,
\end{equation}
where the coefficients $c_{u \hspace{.3mm} v}^{w,\mathbf d} = \langle \sigma_u, \sigma_v, \sigma_{w_\circ w} \rangle_{\dd}$ are the 3-point \emph{Gromov-Witten invariants} of genus 0 and
$w_\circ$ is the longest element of $\SS_{k+1}$.
The  $\mathbf{d} = \mathbf{0}$ Gromov-Witten invariants $c_{u \hspace{.3mm} v}^{w, \mathbf{0}}$ are the Schubert structure constants.



Let $\bar{\Lambda}^k = \CC[h_1,\dots, h_{k}] = \CC \tsr_{\ZZ[t]} \Lambda^k$, the map  $\ZZ[t] \to \CC$
given by  $t \mapsto 1$.
The ring $\bar{\Lambda}^{k}$ has  $\CC$-basis $\{\sone_\mu^{(k)} \mid \mu \in \Par^k\}$
by \cite[Property 27]{LMktableau}, where the $\sone_\mu^{(k)}$ are the weak tableau $k$-Schur functions---see \S\ref{ss unifying}.
Note that $\sone^{(k)}_\mu = \fs^{(k)}_{\mu}(\mathbf{x};1)$ for all $\mu \in \Par^k$
by Theorem \ref{c three defs agree}.

Recall (\S\ref{ss kschur via ksplit}) that a $k$-rectangle is a partition of the form $R_i :=(i^{k+1-i})$ for  $i \in [k]$;
set $R_0=R_{k+1}:=\emptyset$.
A  $k$-bounded partition is {\it irreducible} if it
has at most $k-i$ parts of size~$i$ (equivalently, it contains no $k$-rectangle as a subsequence).

The next result features the localization
$\bar{\Lambda}^k[s_{R_1}^{-1},\ldots, s_{R_k}^{-1}]$
of  $\bar{\Lambda}^k$
and it will be useful to have notation for a basis.
Accordingly, let $\tpar^k$ denote the set of pairs  $(\nu, \mathbf{a})$ consisting of an
irreducible $k$-bounded partition  $\nu$ and a vector $\mathbf{a} = (a_1,\dots, a_k) \in  \ZZ^k$.
Also set $\R^\mathbf{a} := (\emptyset, \mathbf{a}) \in \tpar^k$.
For each $(\nu, \mathbf{a}) \in \tpar^k$,  define
\begin{align}
\label{e krect sone}
\sone^{(k)}_{(\nu, \mathbf{a})} := \sone^{(k)}_\nu s_{R_1}^{a_1} \cdots s_{R_k}^{a_k} \, \in \bar{\Lambda}^k[s_{R_1}^{-1},\ldots, s_{R_k}^{-1}].
\end{align}
For any  $\mu \in \Par^k$, there is a unique irreducible partition, denoted $\mu_{\downarrow}$,
obtained from $\mu$ by removing as many $k$-rectangles as possible;
we identify $\mu$ with the element $(\mu_\downarrow, \mathbf{a}) \in \tpar^k$,
where  $a_i$ is the number of rectangles  $R_i$ removed.
This identification makes the notation \eqref{e krect sone} consistent with
earlier usage
since the  $k$-rectangle property  (Corollary \ref{c k rectangle}) at  $t=1$ yields
$\sone^{(k)}_\mu = \sone^{(k)}_{\mu_\downarrow} s_{R_1}^{a_1} \cdots s_{R_k}^{a_k} = \sone^{(k)}_{(\mu_\downarrow, \mathbf{a})}$.
This computation also shows that $\bar{\Lambda}^k[s_{R_1}^{-1},\ldots, s_{R_k}^{-1}]$ has basis
$\{\sone^{(k)}_{(\nu, \mathbf{a})} \mid (\nu, \mathbf{a}) \in \tpar^k\}$.
The notation above conveniently handles some products:
$\sone^{(k)}_{(\nu, \mathbf{a})} \sone^{(k)}_{\R^\mathbf{b}} = \sone^{(k)}_{(\nu, \mathbf{a})} \sone^{(k)}_{(\emptyset, \mathbf{b})} = \sone^{(k)}_{(\nu, \mathbf{a}+\mathbf{b})}$.


For  $w = w_1 \cdots w_{k+1} \in \SS_{k+1}$ in one-line notation,
the \emph{inversion sequence} $\Inv(w) \in \ZZ_{\ge 0}^{k+1}$ is given by
$\Inv_i(w) = \big|\{ j>i : w_i > w_j \}\big|.$
Define an injection $\zeta:\SS_{k+1}\to {\rm Par}^k$ as follows:
the $i$-th column of $\zeta(w)$ is
\begin{equation}
\label{perm2part}
 \binom{k+1-i}{2}+\Inv_{i}(w_\circ w)
\end{equation}
for all $i\in [k]$.
Set $\theta(w) = \zeta(w)_\downarrow$.

\begin{example}
For  $k= 6$ and $w = 1246357$ in one-line notation, the conjugate of $\zeta(w)$ is
$(21,15,9,4,3,1)$ and $\theta(w)=(4,3,2)$.
\end{example}

For $w \in \SS_{k+1}$, the \emph{descent set} of  $w$ is  $D(w)=\{i :w_i>w_{i+1}\}$, and
the \emph{descent vector} of  $w$ is $\bD(w) := \sum_{i \in D(w)} \epsilon_i \in \ZZ_{\ge 0}^k$.
Lam and Shimozono \cite{LStoda} combine powerful results of Givental, Kim, Ginzburg, Kostant, and Peterson \cite{GiventalKim, Kim, Ginzburg, Kostant, PetersonNotes} to obtain
the following:
\begin{theorem}
\label{t quantum iso}
There is a ring isomorphism $\Phi : QH^*( \Fl_{k+1})[q_1^{-1},\ldots,q_k^{-1}] \to \bar{\Lambda}^k[s_{R_1}^{-1},\ldots, s_{R_k}^{-1}]$
which maps the Schubert classes  $\sigma_w$, $w \in \SS_{k+1}$, and  the $q_i$ as follows:
\begin{equation}
\label{eq:ls}
\Phi(\sigma_w)=\frac{\sone_{\theta(w)}^{(k)}}{\prod_{i\in D(w)} s_{R_i}} = \frac{\sone_{\theta(w)}^{(k)}}{ \sone^{(k)}_{\R^{\bD(w)}} }, \qquad \Phi(q_i) =  \frac{s_{R_{i-1}}s_{R_{i+1}}}{s_{R_i}^2}
\,.
\end{equation}
\end{theorem}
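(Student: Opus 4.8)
The plan is to deduce the theorem from the ``Peterson isomorphism'' proved by Lam and Shimozono in \cite{LStoda}; beyond invoking their result, the work lies in translating their parametrization of quantum Schubert classes and quantum parameters into the combinatorial data ($\zeta$, $\theta$, $\Inv$, $D$, $\bD$, $\sone^{(k)}$) fixed above.

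First I would recall the shape of \cite{LStoda}: there is a ring isomorphism from the localization of $QH^*(\Fl_{k+1})$ at the quantum parameters onto the localization of $H_*(\Gr_G)$, $G = SL_{k+1}$, at the affine Schubert classes indexed by $k$-rectangles, under which every quantum Schubert class $\sigma_w$ goes to a Laurent monomial in the $k$-rectangle classes times the affine Schubert class attached to the translation element determined by $w$, and each $q_i$ goes to the displayed ratio of $k$-rectangle classes. I would then compose with Lam's isomorphism $H_*(\Gr_{SL_{k+1}}) \xrightarrow{\,\sim\,} \bar\Lambda^k = \CC[h_1, \dots, h_k]$ --- the one underlying Table~\ref{table} and Theorem~\ref{c three defs agree} --- which carries the affine Schubert basis (indexed by the minimal-length representatives of $\eS_{k+1}/\SS_{k+1}$) to the weak-tableau $k$-Schur basis $\{\sone^{(k)}_\mu \mid \mu \in \Par^k\}$ and the $k$-rectangle Schubert classes to the Schur functions $s_{R_i}$. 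After localizing the target over the multiplicative set generated by the $s_{R_i}$ (legitimate, as these are the images of the inverted $q_i$), the composite is a ring isomorphism $\Phi$ between precisely the two localizations in the statement.

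The remaining and principal step is to verify that $\Phi$ acts by \eqref{eq:ls}, which reduces to three matching problems. (a) One identifies the $k$-bounded partition recording the translation element $w$ determines: its column lengths are exactly \eqref{perm2part}, where the shift $\binom{k+1-i}{2}$ is the normalization converting the relevant dominant coweight into the partition of its Grassmannian representative and $\Inv(w_\circ w)$ records the ``Weyl-group part'' of $w$; removing $k$-rectangles from this partition produces $\theta(w)$. (b) One checks that, after rewriting the resulting affine Schubert class in the irreducible $k$-Schur basis via the $t=1$ specialization of the $k$-rectangle property (Corollary~\ref{c k rectangle}) and combining with the monomial correction from the Peterson map, the image of $\sigma_w$ is exactly $\sone^{(k)}_{\theta(w)} / \prod_{i \in D(w)} s_{R_i} = \sone^{(k)}_{\theta(w)}/\sone^{(k)}_{\R^{\bD(w)}}$, the descent vector $\bD(w)$ being what ultimately survives in the denominator; here one also uses the consistency of the notation \eqref{e krect sone}. (c) One reads off $\Phi(q_i)$ from the action of the generators of the translation lattice on $k$-rectangle classes, obtaining $s_{R_{i-1}}s_{R_{i+1}}/s_{R_i}^2$. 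None of this requires new ideas, but the bookkeeping is delicate: one must keep track of left versus right cosets, of the two occurrences of the longest element $w_\circ$ (inside $\Inv(w_\circ w)$ and inside the Gromov--Witten invariant $\langle \sigma_u, \sigma_v, \sigma_{w_\circ w}\rangle_\dd$ appearing in \eqref{eq:gw}), and of aligning the normalizations in \cite{LStoda, LamSchubert, LMktableau} with the definitions of $\Inv$, $\zeta$, $\theta$, and $\sone^{(k)}$ used here. I expect parts (a) and (b) --- pinning down exactly which rectangle powers cancel so that only $\bD(w)$ remains --- to be the main obstacle.
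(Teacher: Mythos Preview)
Your approach is correct and matches the paper's: both deduce the theorem from Lam--Shimozono \cite{LStoda}. The paper's proof is considerably more economical, however, because your steps (b) and (c) and most of (a) are already contained in Theorem~1.1 and Proposition~5.1 of \cite{LStoda}; the only genuine work left is to check that the partition $\theta(w)$ defined here via $\zeta(w)_\downarrow$ coincides with the partition $\lambda(w)$ appearing in \cite{LStoda} (defined there through the factorization $w=c_k^{m_0}c_{k-1}^{m_1}\cdots c_1^{m_{k-1}}$), and the paper isolates exactly this verification as Lemma~\ref{le:pain in the ass}. So rather than re-deriving which rectangle powers survive in the denominator, you can cite \cite{LStoda} directly for the form of $\Phi(\sigma_w)$ and $\Phi(q_i)$ and focus solely on reconciling the two descriptions of $\theta(w)$.
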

\begin{proof}
This is Theorem 1.1 and Proposition 5.1 of \cite{LStoda} except that we have used an alternative description of $\theta(w)$ (denoted  $\lambda(w)$ in \cite{LStoda}).  The two descriptions are reconciled in
Lemma \ref{le:pain in the ass} below.
\end{proof}

Let $c_{i}=s_{k+1-i}\cdots s_k \in \SS_{k+1}$.
Any $w\in \SS_{k+1}$ has a unique factorization $w=c_k^{m_0}c_{k-1}^{m_1}\cdots c_{1}^{m_{k-1}}$ with
 $0 \le m_i \le k-i$ for all $i = 0, \dots,  k-1$.
 A word in the positive integers is \emph{cyclically increasing} if some rotation of it is weakly increasing.

\begin{lemma}
\label{le:pain in the ass}
Let $w\in \SS_{k+1}$ and set $(I_1,\dots,I_{k+1})=\Inv(w_\circ w)$.
For  $i \in [k-1]$, the number of parts  $n_i$ of size $i$ in $\theta(w)$ has the following descriptions:
\begin{equation}
\label{eq:multiplicities}
n_i=\begin{cases}
I_i-I_{i+1}-1 & \text{ if } i\not\in D(w),\\
k-i+I_i-I_{i+1} & \text{otherwise},
\end{cases}
\end{equation}
and $n_i = m_i$ for $m_i$ defined by the factorization $w=c_k^{m_0}c_{k-1}^{m_1}\cdots c_{1}^{m_{k-1}}$ above.
This describes $\theta(w)$ completely since it is irreducible and thus only has parts of size  $\le k-1$.
\end{lemma}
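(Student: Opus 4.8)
The plan is to establish the two descriptions of $n_i$ in two independent steps and then observe they are forced to agree. First I would unwind the definition of $\theta(w) = \zeta(w)_\downarrow$ via \eqref{perm2part}: the $i$-th column of $\zeta(w)$ has height $\binom{k+1-i}{2} + I_i$, where $(I_1,\dots,I_{k+1}) = \Inv(w_\circ w)$. Since column heights of a partition determine its parts by successive differences, the number of parts of $\zeta(w)$ of size exactly $i$ is the difference of consecutive column heights, namely $\big(\binom{k+1-i}{2}+I_i\big) - \big(\binom{k-i}{2}+I_{i+1}\big) = (k-i) + I_i - I_{i+1}$ (using $\binom{k+1-i}{2}-\binom{k-i}{2} = k-i$). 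Now passing from $\zeta(w)$ to its irreducible form $\zeta(w)_\downarrow$ removes $k$-rectangles $R_j = (j^{k+1-j})$; removing one copy of $R_i$ deletes exactly $k+1-i$ parts of size $i$ and nothing else. The number of copies of $R_i$ that can be removed is governed by the descent set: I would show that $i \in D(w)$ exactly when the count $(k-i)+I_i-I_{i+1}$ is $\ge k+1-i$, i.e. when a full rectangle $R_i$ is present, and that in that case precisely one copy is removed (this uses the standard fact, traceable to the Lam--Shimozono setup, that $\bD(w)$ records which rectangles appear in $\zeta(w)$ and with multiplicity one). Subtracting $k+1-i$ in the descent case and $0$ otherwise yields exactly \eqref{eq:multiplicities}: $I_i - I_{i+1} - 1$ if $i \notin D(w)$ and $k-i+I_i-I_{i+1}$ if $i \in D(w)$.

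Second, I would prove $n_i = m_i$ by relating the factorization $w = c_k^{m_0} c_{k-1}^{m_1} \cdots c_1^{m_{k-1}}$ to $\Inv(w_\circ w)$. The cycles $c_j = s_{k+1-j}\cdots s_k$ are cyclically increasing, and the claimed factorization is the standard ``staircase'' decomposition of $\SS_{k+1}$; right-multiplication by $c_{k-i}^{m_i}$ contributes in a controlled way to the inversion sequence. Concretely, I would track how $\Inv(w_\circ w)$ changes as one builds $w$ from the identity by multiplying on the right by the $c_{k-i}^{m_i}$ in order, and read off that $m_i$ equals the column-height difference of $\zeta(w)$ at position $i$ after the rectangle corrections — i.e. the same quantity computed in the first step. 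Alternatively, and perhaps more cleanly, I would invoke the description already implicit in \cite{LStoda}: $\theta(w) = \lambda(w)$ there is defined so that its multiplicities are the exponents $m_i$, so this half is essentially a matter of matching conventions, which is exactly what the lemma is for. The final sentence of the lemma is immediate: $\zeta(w)_\downarrow$ is irreducible by construction, so by definition it contains no $k$-rectangle as a subsequence, hence has at most $k-j$ parts of size $j$ for each $j$, and in particular no parts of size $\ge k$ (a single part of size $k$ is already the rectangle $R_k = (k^1)$); so the data $(n_1,\dots,n_{k-1})$ determines $\theta(w)$ completely.

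The main obstacle I anticipate is the bookkeeping in the first step connecting membership $i \in D(w)$ to the presence of a full rectangle $R_i$ in $\zeta(w)$, together with the claim that such rectangles occur with multiplicity at most one. This is the crux: it is where the descent set enters, and it requires understanding the precise image of $\zeta$ and how $\bD(w)$ controls divisibility by $s_{R_i}$ under the Lam--Shimozono map $\Phi$ of Theorem \ref{t quantum iso}. One safe route is to derive it from \eqref{eq:ls} directly: since $\Phi(\sigma_w) = \sone^{(k)}_{\theta(w)} / \prod_{i \in D(w)} s_{R_i}$ and also $\sone^{(k)}_{\zeta(w)} = \sone^{(k)}_{\theta(w)} \prod_i s_{R_i}^{a_i}$ with $a_i$ the number of $R_i$'s removed, consistency of the two expressions for $\Phi(\sigma_w)$ forces $a_i = [\,i \in D(w)\,]$, which is exactly what is needed. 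I would then feed this back into the column-difference computation to complete the argument.
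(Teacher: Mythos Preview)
Your first step has the key equivalence backwards. You claim ``$i \in D(w)$ exactly when the count $(k-i)+I_i-I_{i+1}$ is $\ge k+1-i$,'' but in fact the opposite holds: if $i \notin D(w)$ then $w_i < w_{i+1}$, so $(w_\circ w)_i > (w_\circ w)_{i+1}$, which forces $I_i > I_{i+1}$ and hence $\tilde n_i := (k-i)+I_i-I_{i+1} \ge k-i+1$; moreover $I_i \le k+1-i$ gives $\tilde n_i < 2(k+1-i)$, so exactly one copy of $R_i$ is removed. Conversely if $i \in D(w)$ then $I_i \le I_{i+1}$ and $\tilde n_i \le k-i$, so no rectangle is removed. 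This is precisely how the paper argues, and it is why \eqref{eq:multiplicities} assigns the subtracted value $I_i-I_{i+1}-1$ to the case $i \notin D(w)$, not to the descent case. Your own sentence ``subtracting $k+1-i$ in the descent case \dots\ yields $I_i-I_{i+1}-1$ if $i \notin D(w)$'' is internally inconsistent for exactly this reason.

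Your proposed ``safe route'' for the multiplicity-one claim is circular. The paper cites Theorem~\ref{t quantum iso} from \cite{LStoda} and explicitly states that the present lemma is what reconciles the paper's $\theta(w)$ with the $\lambda(w)$ of \cite{LStoda}; you therefore cannot invoke \eqref{eq:ls} to prove the lemma. The paper instead extracts the multiplicity-one statement directly from the elementary inequalities on $I_i, I_{i+1}$ above. Similarly, for the second half $n_i = m_i$ you essentially defer to \cite{LStoda}, whereas the paper gives a self-contained argument: it tracks the one-line notation of the partial products $c_k^{m_0}\cdots c_{k-i+1}^{m_{i-1}}$, observes that the tail is cyclically increasing with a single descent at position $a = I_i$, computes how right-multiplication by $c_{k-i}^{m_i}$ cyclically shifts this tail, and reads off $I_{i+1}$ in the two cases $i \in D(w)$ and $i \notin D(w)$ to obtain $m_i = n_i$ via \eqref{eq:multiplicities}. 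You would need to supply a comparable direct argument rather than an appeal to the reference whose compatibility is being established.
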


\begin{proof}
Since a partition $\lambda$ has $\lambda_i'-\lambda_{i+1}'$ parts of size $i$,
there are $\tilde{n}_i := k-i+I_{i}-I_{i+1}$ parts of size $i$ in $\zeta(w)$.
If $i\not\in D(w)$ then $k-i +1 \ge I_i > I_{i+1}$,
implying that  $2 (k-i+1) > \tilde{n}_i > k-i$.
On the other hand, if $i\in D(w)$ then $I_i\leq I_{i+1}$,  implying
$k-i \ge \tilde{n}_i$.
Hence $\theta(w)= \zeta(w)_\downarrow$ is obtained from $\zeta(w)$ by removing
one copy of the rectangle $R_i$ for  $i \notin D(w)$. The formula \eqref{eq:multiplicities} follows.

Now consider the factorization $w=c_k^{m_0}c_{k-1}^{m_1}\cdots c_{1}^{m_{k-1}}$ as above.
Let  $i \in [k-1]$. The partial product
$c_k^{m_0}c_{k-1}^{m_1}\cdots c_{k-i+1}^{m_{i-1}}$ in one-line notation has the form $\hat{w} v$ where
$\hat{w}= w_1\cdots w_{i}$,  $v = v_1 \cdots v_{k+1-i}$, and $w_i v$ is cyclically increasing with
$v_a>v_{a+1}$ for $a\in \{0,1, \dots, k+1-i\}$
(for corner cases $a =0$ and  $a=k+1-i$,
set $v_0 :=w_1$ and $v_{k+2-i} := w_i$).
It follows that $I_i=a$.
Since right multiplication by $c_{k-i}^{m_i}$ cyclically shifts $v$
to the left $m_i$ positions,
$$
c_k^{m_0}c_{k-1}^{m_1}\cdots c_{k-i}^{m_{i}}
=
\begin{cases}
\hat w \, v_{m_i+1}<\cdots<v_a>v_{a+1}<\cdots < v_{k+1-i}<v_1<\cdots<v_{m_i} &
\text{ if } i\not\in D(w), \\
\hat{w} \, v_{m_i+1}<\cdots<v_{k+1-i}<v_{1}<\cdots<v_a>v_{a+1}<\cdots< v_{m_i}
&\text{ otherwise,}
\end{cases}
$$
and $v_{m_i}<v_{m_i+1}$.
Therefore, if $i\not\in D(w)$, $I_{i+1}=a-m_i-1$ implies $m_i=I_{i}-I_{i+1}-1=n_i$
and otherwise, $I_{i+1}=k-i-m_i+a$ implies $m_i=I_i-I_{i+1}+k-i=n_i$.
\end{proof}

For a vector  $\dd \in \ZZ^k$, define $\tilde{\dd} = \sum_{i \in [k]} d_i(\epsilon_{i-1} -2\epsilon_i + \epsilon_{i+1}) \in \ZZ^{k}$ where
 $\epsilon_0 = \epsilon_{k+1} := 0$.

\begin{theorem}
\label{the:gw2redklr}
Gromov-Witten invariants are  $k$-Catalan-Kostka coefficients at  $t=1$.
Precisely, for $u,v,w\in \SS_{k+1}$ and $\mathbf d\in \ZZ_{\ge 0}^{k}$, let $\mu=\theta(u)$ and $\nu=\theta(v)$,
$\Psi=\Delta^k(\mu)\uplus\Delta^k(\nu)$, and
$\lambda= (\theta(w), \tilde{\dd} + \bD(u) + \bD(v) - \bD(w)) \in \tpar^k$.  Then
$$
c_{u \hspace{.3mm} v}^{w,\dd}  =
K_{\lambda,\mu\nu}^{\Psi,k}(1)
\quad
$$
provided $\lambda$ is identified with an element of  $\Par^k$ and $c_{u \hspace{.3mm} v}^{w,\dd} = 0$ otherwise.
\end{theorem}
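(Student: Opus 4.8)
The plan is to push the quantum product $\sigma_u*\sigma_v=\sum_{w,\dd}c_{uv}^{w,\dd}\mathbf{q}^{\dd}\sigma_w$ through the Lam--Shimozono ring isomorphism $\Phi$ of Theorem~\ref{t quantum iso} and compare coefficients in the two compatible bases of $\bar{\Lambda}^k[s_{R_1}^{-1},\ldots,s_{R_k}^{-1}]$. Set $\mu=\theta(u)$, $\nu=\theta(v)$, $\Psi=\Delta^k(\mu)\uplus\Delta^k(\nu)$, and let $r,m$ be the lengths for which $\mu\in\Par^k_r$, $\nu\in\Par^k_m$. First I would check $\style(\Psi,\mu\nu)_i\le k$ for all $i$: by \eqref{ed root ideal uplus}, $\nr(\Psi)_i=\nr(\Delta^k(\mu))_i$ for $i\le r$ and $\nr(\Psi)_{i}=\nr(\Delta^k(\nu))_{i-r}$ for $i>r$, while a direct computation from the definition of $\Delta^k$ gives $\style(\Delta^k(\rho),\rho)_j\le k$ for every $\rho\in\Par^k_\ell$. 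Hence $H(\Psi;\mu\nu)\in\Lambda^k$ by Proposition~\ref{p style k space}, the coefficients $K^{\Psi,k}_{\lambda,\mu\nu}(t)$ are defined, and $H(\Psi;\mu\nu)(\mathbf{x};1)=\sum_{\lambda\in\Par^k}K^{\Psi,k}_{\lambda,\mu\nu}(1)\,\sone^{(k)}_\lambda$, using $\sone^{(k)}_\lambda=\fs^{(k)}_\lambda(\mathbf{x};1)$ from Theorem~\ref{c three defs agree}. The same facts together with Corollary~\ref{c Catalan mult t1} applied to $\Delta^k(\mu)$ and $\Delta^k(\nu)$ give $\sone^{(k)}_\mu\sone^{(k)}_\nu=\fs^{(k)}_\mu(\mathbf{x};1)\fs^{(k)}_\nu(\mathbf{x};1)=H(\Psi;\mu\nu)(\mathbf{x};1)$.

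Next I would apply $\Phi$ to the quantum product. By \eqref{eq:ls}, $\Phi(\sigma_u)\Phi(\sigma_v)=\sone^{(k)}_\mu\sone^{(k)}_\nu\big/\sone^{(k)}_{\R^{\bD(u)+\bD(v)}}=H(\Psi;\mu\nu)(\mathbf{x};1)\big/\sone^{(k)}_{\R^{\bD(u)+\bD(v)}}$, while $\prod_i\Phi(q_i)^{d_i}=\prod_j s_{R_j}^{\tilde{\dd}_j}$ (recalling $s_{R_0}=s_{R_{k+1}}=1$). Substituting, clearing the denominator $\sone^{(k)}_{\R^{\bD(u)+\bD(v)}}$, and using $\sone^{(k)}_{(\nu',\mathbf{a}')}=\sone^{(k)}_{\nu'}s_{R_1}^{a'_1}\cdots s_{R_k}^{a'_k}$ then yields
\[
H(\Psi;\mu\nu)(\mathbf{x};1)=\sum_{w\in\SS_{k+1}}\ \sum_{\dd\in\ZZ_{\ge 0}^k}c_{uv}^{w,\dd}\ \sone^{(k)}_{(\theta(w),\ \tilde{\dd}+\bD(u)+\bD(v)-\bD(w))}.
\]

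The key structural input is that the index map $(w,\dd)\mapsto(\theta(w),\tilde{\dd}-\bD(w))$ is a bijection $\SS_{k+1}\times\ZZ^k\to\tpar^k$, and hence (translating by $\bD(u)+\bD(v)$ in the second coordinate) so is $(w,\dd)\mapsto(\theta(w),\tilde{\dd}+\bD(u)+\bD(v)-\bD(w))$. This is immediate from Theorem~\ref{t quantum iso}: one has $\Phi(\mathbf{q}^{\dd}\sigma_w)=\sone^{(k)}_{(\theta(w),\tilde{\dd}-\bD(w))}$, the set $\{\mathbf{q}^{\dd}\sigma_w\}_{w\in\SS_{k+1},\,\dd\in\ZZ^k}$ is a $\CC$-basis of $QH^*(\Fl_{k+1})[q_1^{-1},\ldots,q_k^{-1}]$, so its $\Phi$-image is a $\CC$-basis of $\bar{\Lambda}^k[s_{R_1}^{-1},\ldots,s_{R_k}^{-1}]$; comparing with the basis $\{\sone^{(k)}_{(\nu',\mathbf{a}')}\}_{(\nu',\mathbf{a}')\in\tpar^k}$ forces the displayed map to hit each element of $\tpar^k$ exactly once.

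Finally I would conclude by matching coefficients. Fix $(u,v,w,\dd)$ and put $\lambda=(\theta(w),\tilde{\dd}+\bD(u)+\bD(v)-\bD(w))\in\tpar^k$. By the bijection just described, the displayed formula is precisely the expansion of $H(\Psi;\mu\nu)(\mathbf{x};1)$ in the basis $\{\sone^{(k)}_{(\nu',\mathbf{a}')}\}_{\tpar^k}$, so the coefficient of $\sone^{(k)}_\lambda$ there equals $c_{uv}^{w,\dd}$. On the other hand $H(\Psi;\mu\nu)(\mathbf{x};1)\in\bar{\Lambda}^k$, whose basis $\{\sone^{(k)}_{\bar{\lambda}}\mid\bar{\lambda}\in\Par^k\}$ is the restriction of $\{\sone^{(k)}_{(\nu',\mathbf{a}')}\}_{\tpar^k}$ to the image of the identification $\Par^k\hookrightarrow\tpar^k$ set up before Theorem~\ref{t quantum iso}, and there the expansion is $\sum_{\bar{\lambda}\in\Par^k}K^{\Psi,k}_{\bar{\lambda},\mu\nu}(1)\sone^{(k)}_{\bar{\lambda}}$. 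Thus if $\lambda$ is identified with an element of $\Par^k$ then $c_{uv}^{w,\dd}=K^{\Psi,k}_{\lambda,\mu\nu}(1)$, and otherwise $\sone^{(k)}_\lambda$ does not occur, so $c_{uv}^{w,\dd}=0$, as claimed. I expect the only delicate point, beyond arranging these ingredients, to be the bookkeeping behind ``the $\Par^k$-basis is the restriction of the $\tpar^k$-basis'' --- i.e. identifying which $(\nu',\mathbf{a}')\in\tpar^k$ arise from $k$-bounded partitions via $\mu\mapsto(\mu_\downarrow,\mathbf{a})$ --- but this is exactly what the discussion preceding Theorem~\ref{t quantum iso} (together with Corollary~\ref{c k rectangle} at $t=1$) already provides.
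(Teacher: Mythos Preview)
Your proposal is correct and follows essentially the same approach as the paper: apply $\Phi$ to the quantum product, clear the denominator $\sone^{(k)}_{\R^{\bD(u)+\bD(v)}}$, identify $\sone^{(k)}_\mu\sone^{(k)}_\nu$ with $H(\Psi;\mu\nu)(\mathbf{x};1)$ via the $t=1$ multiplicativity of Catalan functions, and compare coefficients. The paper's proof is more terse and leaves the coefficient-matching step implicit, whereas you spell out the bijection $(w,\dd)\mapsto(\theta(w),\tilde{\dd}-\bD(w))$ explicitly (deducing it from $\Phi$ sending one basis to the other) and also verify the $\style$ condition so that the $K^{\Psi,k}_{\lambda,\mu\nu}(t)$ are defined; both additions are reasonable and do not change the underlying argument.
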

Note that  $\lambda \in \Par^k$ is equivalent to $\tilde{\dd} + \bD(u) + \bD(v) - \bD(w) \in \ZZ_{\ge 0}^k$.

\begin{proof}
By Theorem \ref{t quantum iso}, applying $\Phi$ to \eqref{eq:gw} gives
\begin{equation*}
\label{eq:with lambda denominators}
\frac{\sone_{\mu}^{(k)}}{ \sone_{\R^{\bD(u)}}^{(k)} } \frac{  \sone_{\nu}^{(k)}}{\sone_{\R^{\bD(v)}}^{(k)}} =
\sum_{w,\dd}c_{u \hspace{.3mm} v}^{w,\dd}\,\Phi(\mathbf{q}^\dd) \frac{\sone_{\theta(w)}^{(k)}}{\sone_{\R^{\bD(w)}}^{(k)}} =
\sum_{w,\dd}c_{u \hspace{.3mm} v}^{w,\dd}\, \frac{\sone_{\theta(w)}^{(k)}}{\sone_{\R^{\bD(w)}}^{(k)}} \prod_{i \in [k]} \Big(\frac{s_{R_{i-1}}s_{R_{i+1}}}{s_{R_i}^2}\Big)^{d_i}
= \sum_{w,\dd}c_{u \hspace{.3mm} v}^{w,\dd}\, \sone_{(\theta(w), \tilde{\dd}-\bD(w))}^{(k)}
\,,
\end{equation*}
where we have used the notation from \eqref{e krect sone}.
Clearing denominators we obtain
$$
\sum_{w,\dd}c_{u \hspace{.3mm} v}^{w,\dd}\,{\sone_{(\theta(w), \tilde{\dd} + \bD(u) + \bD(v) - \bD(w))}}^{(k)} = \sone_\mu^{(k)}\sone_\nu^{(k)}= H(\Psi;\mu\nu)(\mathbf x;1)
= \sum_{\lambda\in\Par^k} K^{\Psi,k}_{\lambda , \mu\nu}(1)\,\sone^{(k)}_\lambda
\, ,
$$
where the second equality is by Proposition \ref{p Catalan mult}
together with the fact $\sone^{(k)}_\eta(\mathbf{x}) = \fs^{(k)}_{\eta}(\mathbf{x};1)$ for all $\eta \in \Par^k$.
\end{proof}


\begin{lemma}
\label{le:inversions}
For $u \in \SS_{k+1}$ with only one descent at position $j$,
\[ \theta(u)'= \big(k+1-j-\Inv_1(u), \ldots,k+1-j-\Inv_j(u) \big). \]
\end{lemma}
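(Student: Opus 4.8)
The plan is to pass to conjugate partitions and track how conjugation interacts with stripping off $k$-rectangles, building on the description of $\theta$ from the proof of Lemma~\ref{le:pain in the ass}.

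\textbf{Step 1: rewrite $\zeta(u)'$ in terms of $\Inv(u)$.} Since $(w_\circ u)_l=k+2-u_l$, we have $\Inv_i(w_\circ u)=|\{l>i:u_l>u_i\}|=(k+1-i)-\Inv_i(u)$. Plugging this into \eqref{perm2part} and using $\binom{k+1-i}{2}+(k+1-i)=\binom{k+2-i}{2}$ gives
\[
\zeta(u)'_i\;=\;\binom{k+2-i}{2}-\Inv_i(u)\qquad(i\in[k]).
\]

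\textbf{Step 2: identify the removed rectangles and compute.} Since $D(u)=\{j\}$, the proof of Lemma~\ref{le:pain in the ass} shows that $\theta(u)=\zeta(u)_\downarrow$ is obtained from $\zeta(u)$ by removing exactly one copy of $R_i$ for each $i\in[k]\setminus\{j\}$ (the inequalities $k-i+1\ge I_i>I_{i+1}$ used there hold for all $i\in[k]$, so this includes $R_k=(k)$, which is removed precisely when $j\ne k$). Taking conjugates turns this multiset union of parts into componentwise addition, so, writing $R_i'=((k+1-i)^i)$,
\[
\theta(u)'_c\;=\;\zeta(u)'_c\;-\;\sum_{i\in[k]\setminus\{j\}}(R_i')_c\qquad(c\in[k]).
\]
Now $(R_i')_c=k+1-i$ for $c\le i$ and $0$ otherwise, and $\sum_{i=c}^{k}(k+1-i)=\binom{k+2-c}{2}$, so the subtracted sum equals $\binom{k+2-c}{2}-(k+1-j)$ when $c\le j$ and $\binom{k+2-c}{2}$ when $c>j$. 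Combining with Step~1, the binomial terms cancel and we obtain $\theta(u)'_c=k+1-j-\Inv_c(u)$ for $c\le j$, while $\theta(u)'_c=-\Inv_c(u)=0$ for $c>j$ because $u$ has no descent among positions $>j$. The vanishing for $c>j$ shows the displayed $j$-tuple is all of $\theta(u)'$, and $0\le\Inv_c(u)\le\Inv_j(u)\le k+1-j$ for $c\le j$ confirms that this tuple is indeed a weakly decreasing, nonnegative partition.

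\textbf{Where the work is.} There is no serious obstacle here; the one delicate point is Step~2, namely pinning down that the removed rectangles are exactly $\{R_i:i\in[k]\setminus\{j\}\}$. If one would rather not lean on the internals of the proof of Lemma~\ref{le:pain in the ass}, the same conclusion can be reached self-containedly: define $\tau$ by $\tau'_c=k+1-j-\Inv_c(u)$ for $c\le j$ and $\tau'_c=0$ for $c>j$; the arithmetic of Step~2 gives $\zeta(u)'=\tau'+\sum_{i\in[k]\setminus\{j\}}R_i'$ componentwise, hence $\zeta(u)=\tau\sqcup\bigsqcup_{i\ne j}R_i$ as multisets of parts; one then checks $\tau$ is irreducible (it is $k$-bounded since $\tau_1\le j\le k$, it has $\Inv_{i+1}(u)-\Inv_i(u)\le\Inv_j(u)\le k+1-j\le k-i$ parts of size $i<j$, and $k+1-j-\Inv_j(u)\le k-j$ parts of size $j$ because $\Inv_j(u)\ge 1$); and concludes $\theta(u)=\zeta(u)_\downarrow=\tau$ by uniqueness of the irreducible core.
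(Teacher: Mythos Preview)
Your argument is correct and rests on the same underlying content as the paper's proof---namely the analysis in Lemma~\ref{le:pain in the ass} of how many parts of each size $\theta(u)$ has---but you organize the computation differently. The paper quotes the multiplicities $n_i$ from \eqref{eq:multiplicities} in the three regimes $i<j$, $i=j$, $i>j$ and then recovers the columns $\mu'_i$ by the telescoping recursion $\mu'_{i}=\mu'_{i+1}+n_i$, working from $\mu'_j$ back to $\mu'_1$. You instead compute $\zeta(u)'$ in closed form and subtract off the conjugates of the removed rectangles $R_i$, $i\neq j$, which avoids the iteration at the cost of a small arithmetic identity. One point to be careful about: the proof of Lemma~\ref{le:pain in the ass} explicitly treats only $i\in[k-1]$, so your claim that $R_k$ is also removed when $j\neq k$ needs a sentence of justification (it follows since $\zeta(u)'_k=I_k\in\{0,1\}$ with $I_k=1$ exactly when $k\notin D(u)$); your self-contained alternative in the final paragraph handles this cleanly and is in fact the tidier route.
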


\begin{proof}
%
The partition $\mu=\theta(u)$
has $n_x:=\mu_x'-\mu_{x+1}'$
parts of size $x$ and  $\ell(\mu') \le k-1$ since $\mu$ is irreducible.
Let $(I_1,\dots,I_{k+1})=\Inv(w_\circ u)$.
Since  $D(u) = \{j\}$,
\eqref{eq:multiplicities} gives
\begin{align*}
n_i=
\left\{ \!\!
\begin{array}{l@{\ =\ }ll}
I_i-I_{i+1}-1 &\Inv_{i+1}(u)-\Inv_i(u) & \text{ if } i < j, \\
k-i+I_i-I_{i+1} & k+1-j-\Inv_j(u) & \text{ if } i = j, \\
I_i-I_{i+1}-1 & 0 & \text{ if } i > j,
\end{array}\right.
\end{align*}
where we have used $I_x=k+1-x-\Inv_x(u)$ for all  $x$
and  $\Inv_x(u)=0$ for all  $x > j$.
Therefore $\mu_i'=0$ for all $i>j$ and
$\mu_j'=n_j= k+1-j-\Inv_j(u)$.
From there, $\mu_{j-1}'-\mu_{j}'= \Inv_{j}(u)-\Inv_{j-1}(u)$
implies $\mu_{j-1}'= k+1-j-\Inv_{j-1}(u)$, and iterating gives the result.
\end{proof}


Our main result on Gromov-Witten invariants is obtained by
transferring  our knowledge of the  $k$-Catalan-Kostka coefficients obtained in Theorem \ref{t schur kschur}.

\begin{theorem}
\label{cor:gw2schurtimeskschur}
Let  $u,v,w \in \SS_{k+1}$ and  $\dd \in \ZZ_{\ge 0}^k$.
Suppose $u$ has only one descent at position $j$
and  $v_{m+1}\cdots v_{k+1}$ is cyclically increasing, where $m$ is the maximum index such that $\Inv_1(u)=\cdots =\Inv_m(u)$.
Then
\begin{align}
\label{et gw}
c_{u \hspace{.3mm} v}^{w,\mathbf d}\;= \sum_{T \in \SSYT_{U/\theta(u)}([r])}
\  \sum_{\substack{S \in \SMT^k(\creading(T) \, ; U\theta(v)) \\ \inside(S) = \lambda}}  1 \,,
\end{align}
where $r= k+1-j-\Inv_1(u)$,  $U = R_{k+1-r}$, and
$\lambda= \big(\theta(w), \tilde{\dd} + \epsilon_j + \bD(v) - \bD(w)\big) \in \tpar^k$.
\end{theorem}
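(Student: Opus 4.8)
The plan is to feed Theorem~\ref{t schur kschur} into Theorem~\ref{the:gw2redklr}, as announced at the start of this section. Theorem~\ref{the:gw2redklr} already identifies $c_{u\,v}^{w,\dd}$ with $K^{\Psi,k}_{\lambda,\mu\nu}(1)$, where $\mu=\theta(u)$, $\nu=\theta(v)$, $\Psi=\Delta^k(\mu)\uplus\Delta^k(\nu)$, and where, since $D(u)=\{j\}$ gives $\bD(u)=\epsilon_j$, the datum $\lambda\in\tpar^k$ of Theorem~\ref{the:gw2redklr} is exactly the $\lambda=\big(\theta(w),\tilde\dd+\epsilon_j+\bD(v)-\bD(w)\big)$ of \eqref{et gw}. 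Both sides of \eqref{et gw} vanish unless $\lambda$ is identifiable with an element of $\Par^k$ (the right side because $\inside(S)\in\Par^k$ always), so I may assume $\lambda\in\Par^k$. It then suffices to evaluate $K^{\Psi,k}_{\lambda,\mu\nu}$, and Theorem~\ref{t schur kschur} is tailored to this.

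To apply Theorem~\ref{t schur kschur} with the pair $(\mu,\nu)=(\theta(u),\theta(v))$ I must verify its two hypotheses. First, $\theta(u)\in\Par^{k-r+1}_r$ with $r=k+1-j-\Inv_1(u)$: Lemma~\ref{le:inversions} gives $\theta(u)'=\big(k{+}1{-}j{-}\Inv_1(u),\dots,k{+}1{-}j{-}\Inv_j(u)\big)$, so $\theta(u)$ has exactly $r$ rows and at most $j\le j+\Inv_1(u)=k-r+1$ columns; thus $\theta(u)_1\le k-r+1$, which moreover forces the length-$r$ root ideal $\Delta^k(\theta(u))$ to be empty (since $k-\theta(u)_i+i\ge r-1+i\ge r$ for all $i$). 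Second, $\mu\nu=(\theta(u),\theta(v))$ must be a partition, i.e.\ $\theta(v)_1\le\theta(u)_r$. Using $\theta(u)'$ and the monotonicity $\Inv_1(u)\le\cdots\le\Inv_j(u)$ along the run $u_1<\dots<u_j$, one reads off $\theta(u)_r=\big|\{x\le j:\Inv_x(u)=\Inv_1(u)\}\big|=m$, the maximal index with $\Inv_1(u)=\cdots=\Inv_m(u)$. So I need that the hypothesis ``$v_{m+1}\cdots v_{k+1}$ is cyclically increasing'' implies $\theta(v)_1\le m$. Here I would invoke Lemma~\ref{le:pain in the ass}, from which the number of parts of size $i$ in $\theta(v)$ equals $\big(\Inv_{i+1}(v)-\Inv_i(v)\big)\bmod (k+1-i)$; this vanishes precisely when $\Inv_{i+1}(v)=\Inv_i(v)$, or $\Inv_i(v)=k+1-i$ and $\Inv_{i+1}(v)=0$. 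A short case analysis on the shape of a cyclically increasing word then shows one of these alternatives holds for every $i>m$, giving $\theta(v)_1\le m$. I expect this step to be the main obstacle: it is the sole use of the hypothesis on $v$, and translating the cyclic-descent structure of $v$ into statements about the multiplicities of $\theta(v)$ is the delicate point.

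With both hypotheses established, I would assemble the identity as follows. Since $\Delta^k(\theta(u))$ is the empty root ideal $\varnothing_r$, Proposition~\ref{p Catalan box times} together with \eqref{e Catalan op eq fun} and the identification $\bb_\alpha=\HHH^{\varnothing}_\alpha$ give
\[
H(\Psi;\mu\nu)=\HHH^{\varnothing_r}_{\mu}\,\HHH^{\Delta^k(\nu)}_{\nu}\cdot 1=\HHH^{\varnothing_r}_{\mu}\cdot\fs^{(k)}_{\nu}=\bb_\mu\mysquareb\fs^{(k)}_{\nu}.
\]
Theorem~\ref{t schur kschur} (with $U=(k-r+1)^r=R_{k+1-r}$) then yields $H(\Psi;\mu\nu)=\sum_{T\in\SSYT_{U/\mu}([r])}\fs^{(k)}_{U\nu}\cdot u_{\creading(T)}$, and expanding each term by iterating the defining relation~\eqref{e strong Pieri operator} of the strong Pieri operators gives
\[
H(\Psi;\mu\nu)=\sum_{T\in\SSYT_{U/\mu}([r])}\ \sum_{S\in\SMT^k(\creading(T)\,;\,U\nu)}t^{\spin(S)}\,\fs^{(k)}_{\inside(S)}.
\]
Reading off the coefficient of $\fs^{(k)}_\lambda$ gives $K^{\Psi,k}_{\lambda,\mu\nu}(t)=\sum_{T}\sum_{S:\,\inside(S)=\lambda}t^{\spin(S)}$; setting $t=1$ and recalling $U\nu=U\theta(v)$ together with the reduction of the first paragraph produces exactly \eqref{et gw}.
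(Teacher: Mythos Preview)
Your proposal is correct and follows essentially the same route as the paper: reduce via Theorem~\ref{the:gw2redklr} to computing $K^{\Psi,k}_{\lambda,\mu\nu}(1)$, verify the hypotheses of Theorem~\ref{t schur kschur} for $(\mu,\nu)=(\theta(u),\theta(v))$ using Lemma~\ref{le:inversions} and Lemma~\ref{le:pain in the ass}, and then read off the coefficient. Your assembly step (identifying $H(\Psi;\mu\nu)$ with $\bb_\mu\,\fs^{(k)}_\nu$ via Proposition~\ref{p Catalan box times} and then expanding by iterating \eqref{e strong Pieri operator}) is spelled out more explicitly than in the paper, and your observation that $\theta(u)_r=m$ exactly (rather than merely $\theta(u)_r\ge m$) is a small sharpening; the verification that ``$v_{m+1}\cdots v_{k+1}$ cyclically increasing'' forces $\theta(v)_1\le m$ is left at the same level of detail in both your argument and the paper's.
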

Note that since the sum requires  $\inside(S) = \lambda$,  $c_{u \hspace{.3mm} v}^{w,\mathbf d} = 0$ if $\lambda \notin \Par^k$.

\begin{proof}
Let $\mu=\theta(u)$ and $\nu=\theta(v)$.
Theorem~\ref{the:gw2redklr} in the case $\bD(u)=\epsilon_j$ gives
$c_{u \hspace{.3mm} v}^{w,\mathbf d}=K_{\lambda,\mu\nu}^{\Psi,k}(1)$
with $\Psi=\Delta^k(\mu)\uplus\Delta^k(\nu)$
and $\lambda$ as in the statement above.

The result then follows by computing $K_{\lambda,\mu\nu}^{\Psi,k}(1)$ using Theorem~\ref{t schur kschur}; to apply this theorem we need
to check (1) $r = \ell(\mu)$ and $\mu_1 \le k-r+1$, and (2) 
$\mu\nu$ is a partition.
Lemma~\ref{le:inversions} implies  $r = \ell(\mu)$ and
 $\mu_1 \le j = k-r+1-\Inv_1(u)$, giving  (1).
For (2), since $u$ has only one descent and $\Inv_1(u)=\cdots=\Inv_m(u)$,
Lemma~\ref{le:inversions} implies that the parts of $\mu$ are  $\ge m$.
It remains to show that $\nu$ has no part $> m$.
Letting $(I_1,\dots,I_{k+1})=\Inv(w_\circ v)$, we see from~\eqref{eq:multiplicities} that this
is equivalent to
(I) for $x>m$ with $x\in D(v)$, $I_x=I_{x+1}- (k-x)$, and (II) for $x>m$ with $x\notin D(v)$, $I_x=I_{x+1}+1$.
Condition (I) holds only if $I_x=0$ and $I_{x+1}=k-x$ since $k-x\geq I_{x+1}$.
One then checks that these conditions  are equivalent to $v_{m+1}\cdots v_{k+1}$ being cyclically increasing.
\end{proof}

\begin{example}
For $k=6$, $u=1246357$, and $v=1734562$, the expansion of $\sigma_u*\sigma_v$ can be
computed using Theorem~\ref{cor:gw2schurtimeskschur} since the only descent of $u$ is at position $j=4$,
$\Inv_1(u)=\Inv_2(u)=0$, and $v_3\cdots v_7$ is cyclically increasing.
We have $\theta(u) = 432$ and $\theta(v) = 211111$, so we can make use of Example \ref{ex schur times kschur}.  In particular, for fixed  $\lambda$
(which depends on  $w$ and  $\dd$ to be determined), we have already computed the right side of \eqref{et gw}.
For example, 
for $\lambda = 442111111$, we deduce from \eqref{e Schur times kSchur}
that the right side of \eqref{et gw} is 1. To see which  $w$ and  $\dd$ produce this  $\lambda$,
consider
\[442111111 = \lambda=\big(\theta(w), \tilde{\dd} + \epsilon_j + \bD(v) - \bD(w)\big)
= \big(\theta(w), \tilde{\dd} + \epsilon_4 + \epsilon_2 + \epsilon_6 - \bD(w)\big).\]
Thus $\theta(w) = \lambda_\downarrow = 442$, which implies by Lemma \ref{le:pain in the ass} that $w = 1245367$ up to left multiplication by some power of $c_k = s_1 \cdots s_k$.
For each possibility for $w$, we can solve for $\tilde{\dd}$ and then for $\dd$; if the answer lies in $\ZZ_{\ge 0}^{k}$, then this yields a term of $\sigma_u * \sigma_v$.
For example, with $w = 1245367$, this yields $\tilde{\dd} = \epsilon_1 - \epsilon_2 - \epsilon_6$ and $\dd = \epsilon_2 + \epsilon_3 + \epsilon_4 + \epsilon_5 + \epsilon_6$.
This accounts for the fourth term below.
The other terms are computed similarly.
$$
\sigma_u*\sigma_v =
\sigma_{1746352}+
\sigma_{2745361}+
\sigma_{2736451}+
q_2q_3q_4q_5q_6\big(\sigma_{1245367}+
\sigma_{1236457}+
\sigma_{2135467}\big)
\,.
$$
\end{example}

\section{The combinatorics of Catalan functions}
We establish several important lemmas
which express a Catalan function as the
sum of other Catalan functions with similar indexed root ideals.
We begin by introducing bounce graphs,
a natural combinatorial object arising in these computations. 

\subsection{Bounce graphs}
\label{ss bounce graphs}
These definitions are a review from \cite[\S5.1]{BMPS}.
We say that $\delta\in \Psi$ is a \emph{removable root of  $\Psi$} if  $\Psi \setminus \delta$ is a root ideal.

\begin{definition}
Fix a root ideal $\Psi\in\Delta^+_\ell$ and $x\in[\ell]$.
If there is a removable root  $(x,j)$ of  $\Psi$, then define $\down_\Psi(x) = j$; otherwise, $\down_\Psi(x)$ is undefined.
Similarly, if there is a removable root $(i,x)$ of  $\Psi$, then define $\upp_\Psi(x) = i$; otherwise, $\upp_\Psi(x)$ is undefined.
\end{definition}

\begin{definition}
\label{d row chain graph}
The \emph{bounce graph} of a root ideal  $\Psi \subset \Delta^+_\ell$ is the graph on the vertex set $[\ell]$
with edges $(r, \down_\Psi(r))$ for each $r\in [\ell]$ such that $\down_\Psi(r)$ is defined.
The bounce graph of  $\Psi$ is a disjoint union of paths called \emph{bounce paths of  $\Psi$}.

For each vertex $r \in [\ell]$,
distinguish $\chaindown_\Psi(r)$ (resp. $\chainup_\Psi(r)$) to be
the maximum (resp. minimum) element of the bounce path of  $\Psi$ containing  $r$.
For  $a,b \in [\ell]$ in the same bounce path of  $\Psi$ with  $a\le b$, we define
\[ \bpath_\Psi(a,b) = \big\{a, \down_\Psi(a), \down^2_\Psi(a), \dots, b\big\}, \]
i.e., the set of indices in this path lying between  $a$ and  $b$.
We also set  $\downpath_\Psi(r) = \bpath(r, \chaindown_\Psi(r))$  for any $r \in [\ell]$.
For $b = \down_\Psi^{m}(a)$, the \emph{bounce} from  $a$ to  $b$ is
\[ B_\Psi(a,b) := |\bpath_\Psi(a,b)|-1 = m.\]
\end{definition}

\begin{example}
Examples of $\bpath$, $\downpath$, and bounce for the root ideal $\Psi$ below:
\ytableausetup{mathmode, boxsize=1.03em,centertableaux}
\[\begin{array}{cccccc}
{\tiny \begin{ytableau}
~ & *(red) & *(red)& *(red) & *(red)&*(red)&*(red)&*(red)&*(red)&*(red)\\
~ & *(blue!20) & & & *(red)&*(red)&*(red)&*(red)&*(red)&*(red)\\
~ & & & & &*(red)& *(red) & *(red) &*(red)&*(red)\\
~ & & & & & &*(red)&*(red)&*(red)&*(red)\\
~ & & & & *(blue!20) & & &*(red)&*(red)&*(red)\\
~ & & & & & & & & &*(red)\\
~ & & & & & & & & &*(red) \\
~ & & & & & & & *(blue!20) & & *(red)\\
~ & & & & & & & & & \\
~ & & & & & & & & &
\end{ytableau} } & & &
 {\tiny \begin{ytableau}
*(blue!20) & *(red) & *(red)& *(red) & *(red)&*(red)&*(red)&*(red)&*(red)&*(red)\\
~ & *(blue!20) & & & *(red)&*(red)&*(red)&*(red)&*(red)&*(red)\\
~ & & & & &*(red)& *(red) & *(red) &*(red)&*(red)\\
~ & & & & & &*(red)&*(red)&*(red)&*(red)\\
~ & & & & *(blue!20) & & &*(red)&*(red)&*(red)\\
~ & & & & & & & & &*(red)\\
~ & & & & & & & & &*(red) \\
~ & & & & & & & *(blue!20) & & *(red)\\
~ & & & & & & & & & \\
~ & & & & & & & & & *(blue!20)
\end{ytableau} }\\[16.4mm]
\bpath_{\Psi}(2,8) = \color{blue}\{2,5, 8\} & & & \downpath_{\Psi}(1) = \color{blue}\{1,2, 5,8,10\}
\end{array}\]
\vspace{-1mm}
\[\text{$B_\Psi(2, 8) = 2$, $B_\Psi(1,10) = 4$, $B_\Psi(3,6) = 1$, and $B_\Psi(3,3) = 0$}.\]
\end{example}

\begin{definition}
A root ideal $\Psi$ is said to have

\emph{a wall in rows $r,r+1,\dots, r+d$} if the rows $r, \dots, r+d$ of $\Psi$ have the same length,

\emph{a ceiling in columns $c,c+1$} if columns $c$ and $c+1$ of $\Psi$ have the same length, and

\emph{a mirror in rows} $r,r+1$ if $\Psi$ has removable roots $(r,c)$, $(r+1,c+1)$ for some $c > r+1$.
\end{definition}

\begin{example}
The root ideal $\Psi$ in the previous example has
a ceiling in columns  $2,3$, in columns $3,4$,  and in columns  $8,9$,
a wall in rows  $6,7,8$ and in rows  $9, 10$, and  a mirror in rows  $2,3$, in rows  $3, 4$, and in rows  $4, 5$.
\end{example}

\subsection{Mirror lemmas}

\label{ss mirror lemmas}
The following lemmas give sufficient conditions for a Catalan function to be zero or
for two Catalan functions to be equal.

\begin{lemma}[{\cite[Lemma 6.2]{BMPS}}]
\label{l toggle lemma zero}
Let $(\Psi,\eta)$ be an indexed root ideal of length $\ell$ and $z \in [\ell-1]$, and  suppose
\begin{align}
\label{el toggle lemma zero 1}
&\text{$\Psi$ has a ceiling in columns $z, z+1$;} \qquad \\[-.7mm]
\label{el toggle lemma zero 2}
&\text{$\Psi$ has a wall in rows $z,z+1$;}  \qquad \\[-.7mm]
\label{el toggle lemma zero 3}
&\text{$\eta_{z} = \eta_{z+1}-1$.} \qquad
\end{align}
Then $\HH(\Psi;\eta)=0$.
\end{lemma}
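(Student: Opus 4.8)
The plan is to recognize the two root-ideal hypotheses as precisely the statement that a simple transposition stabilizes $\Psi$, and then combine this with the antisymmetry of Schur functions under the dot action. Throughout, let $s_z\in\SS_\ell$ denote the transposition of $z$ and $z+1$; it acts on $\Delta^+_\ell$ by $(i,j)\mapsto(s_z i,s_z j)$ (with $(z,z+1)$ having no image), on $\ZZ^\ell$ by permuting coordinates, and its dot action is $s_z\cdot\gamma = s_z(\gamma)-\epsilon_z+\epsilon_{z+1}$, so that $\gamma$ is fixed by this dot action exactly when $\gamma_z=\gamma_{z+1}-1$.

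First I would check that \eqref{el toggle lemma zero 1}--\eqref{el toggle lemma zero 3} amount to the conditions $s_z(\Psi)=\Psi$ and $s_z\cdot\eta=\eta$. Since $\Psi$ is an upper order ideal, its $i$-th row is a final segment $\{j:j>n_i\}$ with $n_1\le\cdots\le n_\ell$ and its $j$-th column an initial segment $\{i:i\le m_j\}$ with the $m_j$ weakly increasing. Condition \eqref{el toggle lemma zero 1} reads $m_z=m_{z+1}$, which forces $(z,z+1)\notin\Psi$ and $(i,z)\in\Psi\iff(i,z+1)\in\Psi$ for every $i<z$; condition \eqref{el toggle lemma zero 2} reads $n_z=n_{z+1}$, which likewise forces $(z,z+1)\notin\Psi$ and $(z,j)\in\Psi\iff(z+1,j)\in\Psi$ for every $j>z+1$. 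A root-by-root check---the only root whose $s_z$-image leaves $\Delta^+_\ell$ is $(z,z+1)$, now absent---shows that \eqref{el toggle lemma zero 1} and \eqref{el toggle lemma zero 2} together are equivalent to $s_z(\Psi)=\Psi$, and \eqref{el toggle lemma zero 3} is $s_z\cdot\eta=\eta$ by the remark above.

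Next I would establish the equivariance $H(w(\Psi);w\cdot\gamma)=\sgn(w)\,H(\Psi;\gamma)$ for every $w\in\SS_\ell$. In the framework of \cite[\S4.3]{BMPS} that makes \eqref{e d HH gamma Psi} precise, $H(\Psi;\gamma)=\sum_{f\colon\Psi\to\ZZ_{\ge 0}}t^{|f|}s_{\gamma+\alpha(f)}$ with $\alpha(f):=\sum_{(i,j)\in\Psi}f(i,j)(\epsilon_i-\epsilon_j)$. The relabeling $f\mapsto f\circ w^{-1}$ is a degree-preserving bijection from $\{f\colon\Psi\to\ZZ_{\ge 0}\}$ onto $\{g\colon w(\Psi)\to\ZZ_{\ge 0}\}$ with $\alpha(f\circ w^{-1})=w(\alpha(f))$, so reindexing the defining sum for $H(w(\Psi);w\cdot\gamma)$ and using $w\cdot(\gamma+\alpha(f))=w\cdot\gamma+w(\alpha(f))$ gives
\begin{align*}
H(w(\Psi);w\cdot\gamma)&=\sum_{f\colon\Psi\to\ZZ_{\ge 0}}t^{|f|}\,s_{w\cdot(\gamma+\alpha(f))}\\
&=\sgn(w)\sum_{f\colon\Psi\to\ZZ_{\ge 0}}t^{|f|}\,s_{\gamma+\alpha(f)}=\sgn(w)\,H(\Psi;\gamma),
\end{align*}
where the second equality is the Jacobi--Trudi antisymmetry $s_{w\cdot\mu}=\sgn(w)\,s_\mu$, obtained by swapping two rows of the defining determinant \eqref{ed s gamma}. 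Finally, specializing to $w=s_z$, $\gamma=\eta$ and invoking the first step, the left-hand side equals $H(\Psi;\eta)$, so $H(\Psi;\eta)=-H(\Psi;\eta)$; since $\Lambda$ is a free (hence torsion-free) $\ZZ[t]$-module, $H(\Psi;\eta)=0$. The case analysis of the first step and the determinant manipulation are routine; the step I expect to need the most care is the second, where one must check that the $w$-relabeling is compatible with the raising-operator expansion of $H$---this is exactly the bookkeeping handled by the formalism of \cite[\S4.3]{BMPS}, and the special case we actually use, namely $H(\Psi;\gamma)=-H(\Psi;s_z\cdot\gamma)$ whenever $s_z(\Psi)=\Psi$, may already be recorded there.
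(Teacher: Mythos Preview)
Your proof is correct. The present paper does not prove this lemma---it is quoted verbatim from \cite[Lemma 6.2]{BMPS} without proof---so there is no in-paper argument to compare against. Your approach (recognizing the ceiling and wall conditions as $s_z(\Psi)=\Psi$, the weight condition as $s_z\cdot\eta=\eta$, and then using the $\SS_\ell$-antisymmetry $H(w(\Psi);w\cdot\gamma)=\sgn(w)H(\Psi;\gamma)$ coming from the Jacobi--Trudi determinant) is the standard one and is exactly what underlies the cited lemma; your bookkeeping in each step is accurate, including the verification that both the ceiling and wall hypotheses independently exclude the root $(z,z+1)$ from $\Psi$.
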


\begin{example}
By Lemma~\ref{l toggle lemma zero} with  $z=2$, the following Catalan function is zero:
\ytableausetup{mathmode, boxsize=1.1em,centertableaux}
\[
H\big(\big\{{\text{\small$(1,2), (1,3), (1,4), (1,5),(2,5),(3,5)$}} \big\}; 31211 \big)
=
{\scriptsize
\begin{ytableau}
   3    &  *(red) & *(red) &*(red)  & *(red)   \\
        &    1   &         &        & *(red)   \\
        &        &     2   &        & *(red) \\
        &        &         &    1   &          \\
        &        &         &        &    1
\end{ytableau}}
=0.
\]

Here and throughout the paper,
we depict the Catalan function  $H(\Psi; \gamma)$ of an indexed root ideal  $(\Psi,\gamma)$ of length  $\ell$
by the $\ell\times \ell$ grid of boxes, labeled by matrix-style coordinates,
with the boxes of $\Psi$ shaded and
the entries of $\gamma$ written along the diagonal.
\end{example}

\begin{lemma}[{\cite[Lemma 6.4]{BMPS}}]
\label{l little lemma many zero 2}
Let $(\Psi, \eta)$ be an indexed root ideal of length  $\ell$, and let $y,z,w$ be indices in the same bounce path of $\Psi$ with $1 \le y \le z \le w < \ell$, satisfying
\begin{align}
\label{el little lemma many zero 2 1}
&\text{$\Psi$ has a ceiling in columns $y, y+1$;}  \\[-.7mm]
\label{el little lemma many zero 2 2}
&\text{$\Psi$ has a mirror in rows $x, x+1$ for all $x \in \bpath_\Psi(y, w) \setminus \{w\} $;}  \\[-.7mm]
\label{el little lemma many zero 2 3}
&\text{$\Psi$ has a wall in rows $w,w+1$;}  \\[-.7mm]
\label{el little lemma many zero 2 4}
&\text{$\eta_{x} = \eta_{x+1}$ for all $x \in \bpath_\Psi(y,w) \setminus \{z\}$;}  \\[-.7mm]
\label{el little lemma many zero 2 5}
&\text{$\eta_{z} = \eta_{z+1} - 1$}.
\end{align}
Then $\HH(\Psi;\eta) = 0$.
\end{lemma}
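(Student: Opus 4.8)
The plan is to prove the statement by induction on the bounce $m := B_\Psi(y,w)$, with Lemma~\ref{l toggle lemma zero} as the base case. When $m = 0$ we have $y = z = w$, hypotheses \eqref{el little lemma many zero 2 2} and \eqref{el little lemma many zero 2 4} are vacuous, and \eqref{el little lemma many zero 2 1}, \eqref{el little lemma many zero 2 3}, \eqref{el little lemma many zero 2 5} are exactly \eqref{el toggle lemma zero 1}--\eqref{el toggle lemma zero 3}; so Lemma~\ref{l toggle lemma zero} gives $\HH(\Psi;\eta) = 0$.

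For the inductive step I would use two ingredients. The first is the one-term raising-operator recursion: for any removable root $(a,b)$ of a root ideal $\Psi$,
\[ \HH(\Psi;\eta) = \HH(\Psi \setminus (a,b);\eta) + t\,\HH(\Psi; \eta + \epsilon_a - \epsilon_b), \]
which follows from $(1-tR_{ab})^{-1} = 1 + tR_{ab}(1-tR_{ab})^{-1}$ and the raising-operator calculus of \cite[\S4.3]{BMPS}. The second is a \emph{mirror move}: if $\Psi$ has a mirror in rows $r,r+1$ with removable roots $(r,c),(r+1,c+1)$, a ceiling in columns $r,r+1$, and $\eta_r = \eta_{r+1}$, then $\HH(\Psi;\eta) = \HH(\Psi \cup \{(r+1,c)\};\eta)$. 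This is a quick consequence of the recursion and Lemma~\ref{l toggle lemma zero}: writing $\Psi' := \Psi \cup \{(r+1,c)\}$ (a root ideal in which $(r+1,c)$ is removable), the recursion gives $\HH(\Psi';\eta) - \HH(\Psi;\eta) = t\,\HH(\Psi'; \eta + \epsilon_{r+1} - \epsilon_c)$, and in $\Psi'$ rows $r$ and $r+1$ now both equal $\{c,\dots,\ell\}$ (a wall), columns $r,r+1$ are unchanged (still a ceiling), and the weight $\eta + \epsilon_{r+1} - \epsilon_c$ has its entry in position $r$ one less than its entry in position $r+1$ because $\eta_r = \eta_{r+1}$; hence that term vanishes by Lemma~\ref{l toggle lemma zero}.

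Now let $m \ge 1$ and write $\bpath_\Psi(y,w) = \{y = b_0 < b_1 < \dots < b_m = w\}$ with $b_{i+1} = \down_\Psi(b_i)$ and $z = b_s$; hypothesis \eqref{el little lemma many zero 2 2} at $b_i$ forces the removable roots $(b_i,b_{i+1})$ and $(b_i+1,b_{i+1}+1)$, so in particular $b_{i+1} \ge b_i + 2$. If $s \ge 1$, so $z \ne y$: apply the mirror move at rows $y,y+1$ --- valid since \eqref{el little lemma many zero 2 1} supplies the ceiling, \eqref{el little lemma many zero 2 2} the mirror, and \eqref{el little lemma many zero 2 4} gives $\eta_y = \eta_{y+1}$ --- to obtain $\HH(\Psi;\eta) = \HH(\Psi'';\eta)$ with $\Psi'' = \Psi \cup \{(y+1,b_1)\}$. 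One then checks that $\Psi''$ has bounce path $\{b_1,\dots,b_m\}$ from $b_1$ to $w$ (bounce $m-1$), a ceiling in columns $b_1,b_1+1$ (column $b_1$ has grown to length $y+1$, matching column $b_1+1$), the mirrors in rows $b_i,b_i+1$ for $1 \le i \le m-1$, the wall in rows $w,w+1$, and the weight conditions \eqref{el little lemma many zero 2 4}--\eqref{el little lemma many zero 2 5} with $z$ still equal to $b_s$; the induction hypothesis then gives $\HH(\Psi'';\eta) = 0$. If $s = 0$, so $z = y$: apply the recursion with the removable root $(y,b_1)$. Its first term $\HH(\Psi \setminus (y,b_1);\eta)$ vanishes by Lemma~\ref{l toggle lemma zero}, since deleting the corner $(y,b_1)$ makes rows $y$ and $y+1$ equal in length (row $y+1$ starts at $b_1+1$ by the mirror), leaves the ceiling in columns $y,y+1$, and $\eta_y = \eta_{y+1}-1$ by \eqref{el little lemma many zero 2 5}. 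Its second term is $t\,\HH(\Psi;\eta')$ with $\eta' := \eta + \epsilon_y - \epsilon_{b_1}$, and $\eta'$ satisfies the hypotheses of the lemma for the same $(\Psi,y,w)$ but with the weight defect moved to $z = b_1$ (this uses $\eta_{b_1} = \eta_{b_1+1}$ from \eqref{el little lemma many zero 2 4}); that is the case $s \ge 1$ at bounce $m$, already treated, so it too vanishes. Either way $\HH(\Psi;\eta) = 0$.

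The step I expect to be the main work is the bookkeeping just indicated: verifying that adjoining $(y+1,b_1)$ to $\Psi$ (respectively deleting $(y,b_1)$) preserves each of the five hypotheses --- in particular that the entire chain of mirrors \eqref{el little lemma many zero 2 2} and the terminal wall \eqref{el little lemma many zero 2 3} survive, and that the affected row and column lengths shift exactly as needed to relocate the ceiling or produce the wall. A secondary point requiring care is the formal handling of the infinite raising-operator products behind the recursion, for which one should work inside the framework of \cite[\S4.3]{BMPS}.
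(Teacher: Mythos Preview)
The paper does not actually prove this lemma; it is quoted verbatim from \cite[Lemma~6.4]{BMPS} and used as a black box. So there is no in-paper proof to compare against.

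That said, your argument is correct and is essentially the standard way one proves such a statement. A few remarks on the bookkeeping you flagged as ``the main work'': when you adjoin $(y+1,b_1)$ to $\Psi$ in the case $s\ge 1$, the only row that changes is row $y+1$ and the only column that changes is column $b_1$; since $b_1 \ge y+2$, all mirrors at $b_1,\dots,b_{m-1}$ and the wall at $w,w+1$ lie in rows $> y+1$ and are untouched, and the new ceiling at columns $b_1,b_1+1$ follows because column $b_1$ grows from length $y$ to $y+1$ while column $b_1+1$ already had length $y+1$ (from removability of $(y+1,b_1+1)$). In the case $s=0$, deleting $(y,b_1)$ shortens row $y$ to start at $b_1+1$, matching row $y+1$, while columns $y,y+1$ are unaffected since $b_1 > y+1$. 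The induction structure is also fine: you first establish the case $z\ne y$ at bounce $m$ using the inductive hypothesis at bounce $m-1$, and then reduce $z=y$ at bounce $m$ to $z=b_1\ne y$ at the same bounce $m$; this is a legitimate two-stage argument. The raising-operator recursion you invoke is exactly the ``inductive computation'' used throughout \cite[\S5--6]{BMPS}, so the formal justification you mention is already available there.
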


Here is another useful variant, which is a simplified version of {\cite[Lemma 6.5]{BMPS}}.
\begin{lemma}
\label{l cascading toggle lemma}
Let $(\Psi,\eta)$ be an indexed root ideal of length $\ell$ and $z \in [\ell-1]$, and  suppose
\begin{align}
\label{el toggle lemma nonzero 1}
&\text{$\Psi$ has a ceiling in columns $z, z+1$;} \qquad \\[-.7mm]
\label{el toggle lemma nonzero 2}
&\text{$\Psi$ has a wall in rows $z,z+1$;}  \qquad \\[-.7mm]
\label{el toggle lemma nonzero 3}
&\text{$\eta_{z} = \eta_{z+1}$.} \qquad
\end{align}
If $\Psi$ has a removable root  $\delta$ in row $z+1$, then  $\HH(\Psi;\eta) = \HH(\Psi \setminus \delta ; \eta)$.
\end{lemma}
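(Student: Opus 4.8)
The plan is to peel the root $\delta$ off the raising-operator product by a geometric series, reduce the asserted identity to the vanishing of a single sum over a root ideal that is \emph{symmetric} across rows and columns $z,z+1$, and then kill that sum with a sign-reversing involution.

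Write $\delta=(z+1,c)$. Being removable, $\delta$ is the leftmost box of row $z+1$, so $c>z+1$ and $(z+2,c)\notin\Psi$; the wall \eqref{el toggle lemma nonzero 2} then forces row $z$ of $\Psi$ to occupy columns $\{c,c+1,\dots,\ell\}$ as well, so $(z,c)$ is a removable root of $\Psi\setminus\delta$ and $\Psi^{--}:=\Psi\setminus\{(z,c),(z+1,c)\}$ is a root ideal. Since the raising operators commute and the products are the completed ones of \cite[\S4.3]{BMPS}, I would peel off first $\delta$ and then $(z,c)$ via $(1-tR)^{-1}=\sum_{m\ge0}(tR)^m$ to obtain
\[
H(\Psi;\eta)=\sum_{m,n\ge0}t^{m+n}H(\Psi^{--};\gamma_{m,n}),\qquad H(\Psi\setminus\delta;\eta)=\sum_{n\ge0}t^{n}H(\Psi^{--};\gamma_{0,n}),
\]
where $\gamma_{m,n}:=\eta+n\epsilon_z+m\epsilon_{z+1}-(m+n)\epsilon_c$. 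Subtracting, the claim becomes $\sum_{m\ge1,\,n\ge0}t^{m+n}H(\Psi^{--};\gamma_{m,n})=0$.

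To prove that, I would use the symmetry of $\Psi^{--}$. Since $c>z+1$, columns $z,z+1$ of $\Psi^{--}$ coincide with those of $\Psi$, so the ceiling \eqref{el toggle lemma nonzero 1} survives; and rows $z,z+1$ of $\Psi^{--}$ both occupy columns $\{c+1,\dots,\ell\}$, so $\Psi^{--}$ has a wall in rows $z,z+1$. Hence $\Psi^{--}$ is invariant under the coordinate transposition $\tau=(z,z+1)$, which yields the antisymmetry $H(\Psi^{--};\tau\bullet\gamma)=-H(\Psi^{--};\gamma)$ for every $\gamma\in\ZZ^\ell$, where $(\tau\bullet\gamma)_z=\gamma_{z+1}-1$, $(\tau\bullet\gamma)_{z+1}=\gamma_z+1$, and the remaining coordinates are unchanged. (This is the familiar Weyl-group antisymmetry of Catalan functions: $s_{\tau\bullet\gamma}=-s_\gamma$, while $\prod_{\alpha\in\Psi^{--}}(1-tR_\alpha)^{-1}$ intertwines the $\tau$-action because the $R_\alpha$ are commuting translations and $\tau(\Psi^{--})=\Psi^{--}$.) A direct computation gives $\tau\bullet\gamma_{m,n}=\gamma_{n+1,\,m-1}$, so $(m,n)\mapsto(n+1,m-1)$ is a $t$-degree-preserving involution of $\{(m,n):m\ge1,\ n\ge0\}$ along which $H(\Psi^{--};\gamma_{m,n})$ changes sign; its non-fixed orbits cancel in pairs, and at a fixed point $m=n+1$ one has $(\gamma_{n+1,n})_z=(\gamma_{n+1,n})_{z+1}-1$, so $H(\Psi^{--};\gamma_{n+1,n})=0$ by Lemma~\ref{l toggle lemma zero} (whose hypotheses---ceiling in columns $z,z+1$, wall in rows $z,z+1$, and the unit gap in the weight---are all met). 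Thus the sum vanishes and $H(\Psi;\eta)=H(\Psi\setminus\delta;\eta)$.

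The main obstacle, I expect, is pinning down the antisymmetry $H(\Psi^{--};\tau\bullet\gamma)=-H(\Psi^{--};\gamma)$ rigorously for arbitrary (possibly non-dominant) weights inside the raising-operator calculus, and verifying the small structural claims about $\Psi^{--}$ in all cases (that removing the two column-$c$ boxes leaves both the ceiling and the wall intact, and that $(z,c)$ is genuinely removable from $\Psi\setminus\delta$, including degenerate configurations). As a shortcut, one may instead read the statement off the cascading toggle lemma \cite[Lemma 6.5]{BMPS} by taking the bounce path there to be the single vertex $z$, so that its mirror condition is vacuous and its ceiling and wall conditions both refer to $z,z+1$.
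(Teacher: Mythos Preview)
Your argument is correct. The paper itself gives no proof here: it simply records the lemma as ``a simplified version of \cite[Lemma~6.5]{BMPS}'' and moves on. Your closing shortcut---specialize \cite[Lemma~6.5]{BMPS} to the degenerate bounce path $\{z\}$, so the mirror condition is vacuous and the ceiling/wall conditions collapse to columns/rows $z,z+1$---is exactly that citation spelled out.

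Your direct proof is a genuine addition and is sound. The structural claims about $\Psi^{--}$ check out: removability of $\delta=(z+1,c)$ gives $(z+1,c-1),(z+2,c)\notin\Psi$, and the wall forces row~$z$ of $\Psi$ to start at column~$c$ too, so $(z,c)$ is removable from $\Psi\setminus\delta$; since $c>z+1$, columns $z,z+1$ are untouched, and removing one box from each of rows $z,z+1$ preserves the wall. The antisymmetry $H(\Psi^{--};\tau\bullet\gamma)=-H(\Psi^{--};\gamma)$ is exactly the $s_{\tau\bullet\gamma}=-s_\gamma$ identity pushed through the raising-operator product, which is legitimate because $\tau(\Psi^{--})=\Psi^{--}$ permutes the factors; this is the standard mechanism behind all the toggle/mirror lemmas in \cite[\S6]{BMPS}. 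Your involution $(m,n)\mapsto(n+1,m-1)$ on $\{m\ge1,n\ge0\}$ and the identification $\tau\bullet\gamma_{m,n}=\gamma_{n+1,m-1}$ (using $\eta_z=\eta_{z+1}$) are correct, and the fixed points $m=n+1$ die either by Lemma~\ref{l toggle lemma zero} as you say or already by the antisymmetry itself. So you have supplied a self-contained proof where the paper only gives a pointer, and the method you use is precisely the one underlying the cited result.
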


\subsection{Downpath lemmas}
\label{ss downpath lemmas}
We prove two lemmas which express a Catalan function as a sum over a downpath of similar Catalan functions.
We first recall \cite[Corollary 5.7]{BMPS}:

\begin{lemma}
\label{c inductive computation atom down}
Let $(\Psi,\eta)$ be an indexed root ideal of length  $\ell$
and  $p \in [\ell]$.
Then
\begin{align}
\HH(\Psi;\eta) = \sum_{z \in \downpath_\Psi(p)} t^{B_\Psi(p,z)}\HH(\Psi^z;\eta+ \epsilon_p - \epsilon_z),
\label{ec inductive computation atom down}
\end{align}
where $\Psi^z := \Psi \setminus \{(z, \down_\Psi(z))\}$ for $z \neq \chaindown_\Psi(p)$ and $\Psi^{\chaindown_\Psi(p)} := \Psi$.
\end{lemma}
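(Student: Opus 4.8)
The plan is to isolate a single one-step recursion for Catalan functions and then iterate it along the bounce path issuing from $p$. The one-step recursion I have in mind is: if $\delta=(p,q)$ is a removable root of a root ideal $\Psi$ --- equivalently $q=\down_\Psi(p)$ --- then for every $\gamma\in\ZZ^\ell$,
\[ \HH(\Psi;\gamma)=\HH(\Psi\setminus\delta;\gamma)+t\,\HH(\Psi;\gamma+\epsilon_p-\epsilon_q). \]
This follows from the defining product \eqref{e d HH gamma Psi} together with the formal-series identity $(1-tR_{pq})^{-1}=1+tR_{pq}(1-tR_{pq})^{-1}$: since the raising operators commute and $\Psi\setminus\delta$ is again a root ideal, $\prod_{(i,j)\in\Psi}(1-tR_{ij})^{-1}$ factors as $(1-tR_{pq})^{-1}\prod_{(i,j)\in\Psi\setminus\delta}(1-tR_{ij})^{-1}$; substituting the identity, the $1$-summand produces $\HH(\Psi\setminus\delta;\gamma)$ and the $tR_{pq}$-summand produces $t\,\HH(\Psi;\gamma+\epsilon_p-\epsilon_q)$, using $R_{pq}s_\gamma=s_{\gamma+\epsilon_p-\epsilon_q}$. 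The precise sense of these manipulations is the raising-operator formalism of \cite[\S4.3]{BMPS}.

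Granting this, I would prove \eqref{ec inductive computation atom down} by induction on $m:=B_\Psi(p,\chaindown_\Psi(p))$, the length of the downpath from $p$, keeping $\Psi$ fixed and letting $\eta$ vary. If $m=0$ then $p=\chaindown_\Psi(p)$, $\downpath_\Psi(p)=\{p\}$, and $\Psi^p:=\Psi$ by convention, so the right-hand side of \eqref{ec inductive computation atom down} is the lone term $t^{0}\HH(\Psi;\eta)$ and the identity is trivial. If $m>0$, set $q=\down_\Psi(p)$ and apply the one-step recursion with $\delta=(p,q)$. The term $\HH(\Psi\setminus\delta;\eta)$ is exactly the $z=p$ term of \eqref{ec inductive computation atom down}, since $\Psi^p=\Psi\setminus\delta$ here (as $p\ne\chaindown_\Psi(p)$) and $B_\Psi(p,p)=0$. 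To the remaining term $t\,\HH(\Psi;\eta+\epsilon_p-\epsilon_q)$ I would apply the inductive hypothesis with $q$ in place of $p$ and $\eta+\epsilon_p-\epsilon_q$ in place of $\eta$, which is legitimate because $\chaindown_\Psi(q)=\chaindown_\Psi(p)$ and $B_\Psi(q,\chaindown_\Psi(q))=m-1$. This gives
\[ t\,\HH(\Psi;\eta+\epsilon_p-\epsilon_q)=\sum_{z\in\downpath_\Psi(q)}t^{\,1+B_\Psi(q,z)}\,\HH\big(\Psi^z;\,\eta+\epsilon_p-\epsilon_q+\epsilon_q-\epsilon_z\big). \]
Now $\epsilon_p-\epsilon_q+\epsilon_q-\epsilon_z=\epsilon_p-\epsilon_z$, while $1+B_\Psi(q,z)=B_\Psi(p,z)$ because the bounce path from $p$ to $z$ runs through $q$ with $B_\Psi(p,q)=1$; also the sets $\Psi^z$ depend only on $\Psi$'s bounce data, not on whether they are indexed from $p$ or from $q$. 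Since $\downpath_\Psi(p)=\{p\}\sqcup\downpath_\Psi(q)$, restoring the $z=p$ term yields exactly \eqref{ec inductive computation atom down}.

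The whole argument is bookkeeping along a single bounce path; the only real content is the first step, where one must justify pulling the single factor $(1-tR_{pq})^{-1}$ out of the infinite product and commuting it past the rest. That is the point to treat carefully, using the formal raising-operator conventions of \cite{BMPS}; everything afterward reduces to the additivity $B_\Psi(p,z)=B_\Psi(p,q)+B_\Psi(q,z)$ along the path and the telescoping $\epsilon_p-\epsilon_q+\epsilon_q-\epsilon_z=\epsilon_p-\epsilon_z$.
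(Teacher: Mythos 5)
Your proof is correct. The one-step recursion $\HH(\Psi;\gamma)=\HH(\Psi\setminus\delta;\gamma)+t\,\HH(\Psi;\gamma+\epsilon_p-\epsilon_q)$ for a removable root $\delta=(p,q)$ is exactly the right building block (it is essentially \cite[Lemma 5.2]{BMPS}), and iterating it along the bounce path from $p$, with induction on the downpath length $m$, is how \cite[Corollary 5.7]{BMPS} is obtained. Note that the present paper does not reprove this lemma---it is cited from \cite{BMPS}---so the relevant comparison is to that paper's argument, which your proof reproduces: the same decomposition, the same telescoping of $\epsilon$'s, and the same additivity $B_\Psi(p,z)=B_\Psi(p,q)+B_\Psi(q,z)$ along the path. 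One small wording nit: you describe the induction as ``keeping $\Psi$ fixed and letting $\eta$ vary,'' but you are also changing $p$ to $q$; what actually decreases is the quantity $m=B_\Psi(p,\chaindown_\Psi(p))$, which you do identify as the induction parameter, so the argument is sound. Your remark that the only point requiring care is the formal extraction of $(1-tR_{pq})^{-1}$ from the product, deferred to \cite[\S4.3]{BMPS}, is appropriate: since the roots commute and $\Psi$ is a finite set, the factorization is unproblematic once the formal raising-operator conventions are in place.
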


\begin{lemma}
\label{l expand downpath}
Let $(\Psi, \eta)$ be an indexed root ideal of length  $\ell$, and $r \in [\ell]$,  $p \in [r]$.
Suppose
\begin{align}
\label{el expand 1}
&\text{$\eta_r \ge \eta_{r+1} \ge \dots \ge \eta_\ell$;}  \\[-.7mm]
\label{el expand 1b}
&\text{$\nr(\Psi)_i \ge r-i$ for  $i \in [r]$;}  \\[-.7mm]
\label{el expand 2}
&\text{$\style(\Psi, \eta)_i \le k$ for  $i \le r$ \ and \  $\style(\Psi, \eta)_i = \min(k, \ell-i+\eta_i)$ for $i>r$;}  \\[-.7mm]
\label{el expand 5}
&\text{$\style(\Psi, \eta)_p < k$;}   \\[-.7mm]
\label{el expand 4}
& \text{$\big(p< r$ and $\nr(\Psi)_p =  \nr(\Psi)_{p+1}\big)$  $\implies$  $\nr(\Psi)_{p+1} \le \nr(\Psi)_{p+2}$ and $\eta_p = \eta_{p+1} - 1$}.
\end{align}
Then
\begin{align}
\label{el expand downpath}
H(\Psi ; \eta )
&= \sum_{\{z \in \downpath_\Psi(p) \, \mid \, z = p \text{ or } \eta_z > \eta_{z+1}\}}
t^{B_\Psi(p,z)} H(\Psi^z ; \eta + \epsilon_p - \epsilon_z),
\end{align}
where $\Psi^z := \Psi \setminus \{(z, \down_\Psi(z))\}$ for $z \neq \chaindown_\Psi(p)$ and $\Psi^{\chaindown_\Psi(p)} := \Psi$.
\end{lemma}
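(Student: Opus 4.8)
The plan is to derive the identity from the coarser downpath formula in Lemma~\ref{c inductive computation atom down}, showing that the terms it produces which are absent from \eqref{el expand downpath} all vanish. Lemma~\ref{c inductive computation atom down} gives
\[
H(\Psi;\eta) \;=\; \sum_{z \in \downpath_\Psi(p)} t^{B_\Psi(p,z)}\, H\big(\Psi^z;\, \eta + \epsilon_p - \epsilon_z\big),
\]
so it suffices to prove $H(\Psi^z;\eta+\epsilon_p-\epsilon_z)=0$ whenever $z\in\downpath_\Psi(p)$ with $z\neq p$ and $\eta_z=\eta_{z+1}$. First I would record the consequences of the hypotheses. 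The removable root of row $p$ of $\Psi$ lies in column $p+\nr(\Psi)_p+1$, which by \eqref{el expand 1b} is at least $r+1$; hence every element of $\downpath_\Psi(p)$ other than $p$ lies in a row $>r$, and then \eqref{el expand 1} gives $\eta_z\ge\eta_{z+1}$ for all of them, so the terms absent from \eqref{el expand downpath} are exactly those with $z\neq p$ and $\eta_z=\eta_{z+1}$; for such a $z$, since $p<z<z+1$, we have $(\eta+\epsilon_p-\epsilon_z)_z=(\eta+\epsilon_p-\epsilon_z)_{z+1}-1$. Moreover, if such a $z$ is not the bottom $\chaindown_\Psi(p)$ of its bounce path then row $z$ of $\Psi$ carries a removable root, hence is nonempty; as $z>r$, condition \eqref{el expand 2} forces row $z$ into the ``$k$-Schur'' shape $\nr(\Psi)_z=k-\eta_z$, and since $\eta_{z+1}=\eta_z$ the same holds for row $z+1$, which therefore starts exactly one column to the right of row $z$.

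Fix such a $z$ and set $\eta^z:=\eta+\epsilon_p-\epsilon_z$. The first claim is that $\Psi^z$ has a wall in rows $z,z+1$: when $z\neq\chaindown_\Psi(p)$, deleting the removable root $(z,\down_\Psi(z))$ shifts the start of row $z$ right by one, making rows $z$ and $z+1$ of $\Psi^z$ equally long by the previous paragraph; when $z=\chaindown_\Psi(p)$, so $\Psi^z=\Psi$, the absence of a removable root in row $z$ already forces rows $z$ and $z+1$ of $\Psi$ to be equally long. Next I would trace the bounce path of $\Psi^z$ through $z$ upward: along the maximal stretch on which $\eta$ is constant, consecutive rows of the $k$-Schur staircase are offset by a single column, which supplies a mirror in each relevant pair of rows -- and since the deleted root lies strictly to the right of this region these mirrors persist in $\Psi^z$ -- while the stretch terminates, by \eqref{el expand 1b} and \eqref{el expand 2}, at two columns of equal height, i.e.\ a ceiling. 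Letting $y$ be the top row of this stretch and taking $w=z$, and using the weight identities $\eta^z_x=\eta^z_{x+1}$ along the stretch except at $x=z$ together with $\eta^z_z=\eta^z_{z+1}-1$, Lemma~\ref{l little lemma many zero 2} then yields $H(\Psi^z;\eta^z)=0$ (its degenerate case $y=z$ being exactly Lemma~\ref{l toggle lemma zero}). The one remaining situation, $z=\ell$, is handled by a different mechanism: there $\eta_\ell=\eta_{\ell+1}$ (read with $\eta_{\ell+1}:=0$) makes $(\eta^z)_\ell$ strictly negative, and since $\Psi$ has no roots in row $\ell$ every raising operator in $H(\Psi^\ell;\eta^\ell)=H(\Psi;\eta^\ell)$ can only decrease the $\ell$th coordinate further, so each Schur function in its expansion has a strictly negative last part and is $0$; thus $H(\Psi^\ell;\eta^\ell)=0$.

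The hard part is the geometric bookkeeping in the second paragraph: pinning down precisely where, going up the bounce path of $\Psi^z$ from $z$, the ``$k$-Schur'' region of $\Psi$ governed by \eqref{el expand 2} gives way to the region governed by \eqref{el expand 1b}, and confirming that a genuine ceiling (two columns of equal height, not merely a wall) appears there, joined to $z$ by an unbroken chain of mirrors along which $\eta$ is constant. Condition \eqref{el expand 4} is exactly what handles the only configuration that could break this chain -- namely $p<r$ with $\nr(\Psi)_p=\nr(\Psi)_{p+1}$, which is precisely a mirror in rows $p,p+1$ of $\Psi$ -- for then its conclusion $\eta_p=\eta_{p+1}-1$ forces $\eta^z_p=\eta^z_{p+1}$, which is exactly the instance of condition \eqref{el little lemma many zero 2 4} needed at $x=p$ when $p$ lies inside the bounce-path interval $\bpath_{\Psi^z}(y,z)$. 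Granting this verification, only the terms displayed in \eqref{el expand downpath} survive, which is the assertion of the lemma.
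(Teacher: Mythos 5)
Your proposal follows essentially the same route as the paper's proof: apply Lemma~\ref{c inductive computation atom down} to get the full downpath sum, dispose of the $z=\ell$ term (when $\eta_\ell=0$) via $s_\gamma=0$ for $\gamma_\ell<0$, and kill the remaining unwanted terms with $p<z<\ell$, $\eta_z=\eta_{z+1}$ by verifying the hypotheses of Lemma~\ref{l little lemma many zero 2} for $(\Psi^z,\eta+\epsilon_p-\epsilon_z)$, with hypothesis~\eqref{el expand 4} supplying the needed weight identity at $x=p$. The one organizational difference is in locating the ceiling: you take $y$ to be the top of the constant-$\eta$ mirror stretch, while the paper sets $y+1=\chainup_{\Psi^z}(z+1)$, which gives the ceiling for free and then shows the two bounce paths run parallel (split into the cases $\nr(\Psi)_p<\nr(\Psi)_{p+1}$ and $\nr(\Psi)_p=\nr(\Psi)_{p+1}$ with $p<r$) — a cleaner way to carry out the bookkeeping you correctly identify as the hard part.
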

In the corner case  $z = \ell$, we interpret the condition  $\eta_z > \eta_{z+1}$ by setting  $\eta_{\ell+1} = 0$
(though we still regard  $(\Psi, \eta)$ as an indexed root ideal of length  $\ell$, \emph{not}  $\ell+1$).

\begin{proof}
Lemma~\ref{c inductive computation atom down} gives
\begin{align*}
\HH(\Psi;\eta)   = \sum_{z \in \downpath_\Psi(p)} t^{B_\Psi(p,z)}\HH(\Psi^z ;\eta + \epsilon_p - \epsilon_z).
\end{align*}
If the sum contains a term with $z = \ell > p$ and  $\eta_\ell = 0$, then  $\HH(\Psi^z ;\eta + \epsilon_p - \epsilon_z)= 0$; this
follows directly from  Definition \ref{d HH gamma Psi}
since for  $\gamma \in \ZZ^\ell$, $s_\gamma =0$ whenever $\gamma_\ell < 0$ (see \eqref{ed s gamma}).

This given, the result is now obtained by applying Lemma~\ref{l little lemma many zero 2}
(with  $\Psi^z$ in place of  $\Psi$,  $\tilde{\eta} := \eta + \epsilon_p - \epsilon_z$ in place of  $\eta$,
 $w =z$, and  $y$ defined below)
to show that $\HH(\Psi^z ;\tilde{\eta}) = 0$ whenever  $p < z < \ell$ and $\eta_z = \eta_{z+1}$.
We verify the hypotheses \eqref{el little lemma many zero 2 1}--\eqref{el little lemma many zero 2 5}:
since  $z\ne p$, \eqref{el expand 1b} implies $z \ge \down_\Psi(p) > r$.
Hence
${\eta}_z = {\eta}_{z+1}$ and \eqref{el expand 2} imply $\Psi^z$ has a wall in rows $z, z+1$.
Let $y +1 = \chainup_{\Psi^z}(z+1)$.
Since  $\upp_{\Psi^z}(y+1)$ is undefined,  $\Psi^z$ has a ceiling in columns  $y, y+1$.

Since  $\down_{\Psi}(p)$ is defined,  $\nr(\Psi)_p \le \nr(\Psi)_{p+1}$.
If  $p=r$, then $\style(\Psi,\eta)_r < k$, \linebreak[4]
$\style(\Psi,\eta)_{r+1} = \min(k, \ell-i+\eta_{r+1})$, $\eta_r \ge \eta_{r+1}$, and  $z < \ell$ imply
$\nr(\Psi)_p < \nr(\Psi)_{p+1}$.
The two cases below therefore exhaust all possibilities and complete the proof.

\textbf{Case 1: $\nr(\Psi)_p < \nr(\Psi)_{p+1}$.}
In this case, $\upp_\Psi(q+1)$ is undefined, where $q := \down_\Psi(p)$.
It then follows from $\down_\Psi(p) > r$, \eqref{el expand 1}, and \eqref{el expand 2} that
 $y+1 \ge q+1$ and
\begin{align}
\label{e bpath plus 1}
\big\{x+1 \mid x\in \bpath_{\Psi^z}(y,z) \big\} = \bpath_{\Psi^z}(y+1,z+1).
\end{align}
Then \eqref{el expand 2} implies
$\tilde{\eta}_{x} = \tilde{\eta}_{x+1}$ for all $x \in \bpath_{\Psi^z}(y, z)\setminus \{z\}$. This verifies
\eqref{el little lemma many zero 2 2} and \eqref{el little lemma many zero 2 4}.

\textbf{Case 2:  $p < r$ and $\nr(\Psi)_p = \nr(\Psi)_{p+1}$.}
The same argument from case 1,
using in addi\-tion that
$\down_\Psi(p)+1= \down_\Psi(p+1)$ and $\tilde{\eta}_{p} = \tilde{\eta}_{p+1}$
by \eqref{el expand 4}, verifies \eqref{el little lemma many zero 2 2}~and~\eqref{el little lemma many zero 2 4}.
\end{proof}

%

The next result identifies a case where acting by a strong Pieri operator
on a  $k$-Schur function $\fs^{(k)}_{\eta}$
involves only  strong marked covers with height 1 ribbon components;
this is equivalent to the resulting sum of $k$-Schur functions 
being over only partitions
 contained in  $\eta$
(see Definition~7.9, the proof of Proposition~8.13, and Equation~9.6 of \cite{BMPS}).

\begin{corollary}
\label{c expand downpath}
Let  $\eta \in \Par^k_\ell$ and $\Phi = \Delta^k(\eta)$.
Let $r \in [\ell]$ and  $p \in [r]$.  Assume that $\nr(\Phi)_i \ge r-i$ for  $i \in [r]$
and that $p = r < \ell$ implies $\eta_r > \eta_{r+1}$.
Then
\begin{align}
\label{ec expand downpath}
\fs^{(k)}_{\eta} \cdot u_p = H(\Phi ; \eta - \epsilon_p)
&= \sum_{\{z \in \downpath_\Phi(p) \, \mid \, \eta -\epsilon_z \in \Par^k_\ell\}}
t^{B_\Phi(p,z)} \fs^{(k)}_{\eta - \epsilon_z}.
\end{align}
\end{corollary}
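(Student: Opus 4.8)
The first equality $\fs^{(k)}_\eta\cdot u_p=H(\Phi;\eta-\epsilon_p)$ is \eqref{ec flm k schur} with $\mu=\eta$ (here $p\in[r]\subseteq[\ell]$ and $\eta\in\Par^k_\ell$, so its hypotheses hold). For the second equality the plan is to expand $H(\Phi;\eta-\epsilon_p)$ along the downpath $\downpath_\Phi(p)$ and then identify each summand. Since $(\eta-\epsilon_p)+\epsilon_p-\epsilon_z=\eta-\epsilon_z$, Lemma~\ref{c inductive computation atom down} applied to the indexed root ideal $(\Phi,\eta-\epsilon_p)$ yields $H(\Phi;\eta-\epsilon_p)=\sum_{z\in\downpath_\Phi(p)}t^{B_\Phi(p,z)}H(\Phi^z;\eta-\epsilon_z)$, and the whole problem becomes evaluating the summands $H(\Phi^z;\eta-\epsilon_z)$; when its hypotheses are in force, Lemma~\ref{l expand downpath} applied to $(\Phi,\eta-\epsilon_p)$ produces the same expansion with the sum already restricted to $\{z\in\downpath_\Phi(p):z=p\text{ or }\eta_z>\eta_{z+1}\}$.

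Two computations drive the argument. First, for $\eta\in\Par^k_\ell$ one verifies directly that $\nr(\Phi)_i=\min(\ell-i,\,k-\eta_i)$, hence $\style(\Phi,\eta)_i=\min(k,\,\ell-i+\eta_i)$ for every $i$; this makes hypothesis \eqref{el expand 2} of Lemma~\ref{l expand downpath} immediate, and also \eqref{el expand 5}, since passing from $\eta$ to $\eta-\epsilon_p$ lowers $\style$ by $1$ in position $p$. Hypothesis \eqref{el expand 1} is just that $\eta$ is a partition — the assumption $\eta_r>\eta_{r+1}$ enters exactly in the corner case $p=r<\ell$ — and \eqref{el expand 1b} is the standing hypothesis $\nr(\Phi)_i\ge r-i$. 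Second, the rows of $\Phi=\Delta^k(\eta)$ begin strictly further right as one descends; so row $z$ of $\Phi$ carries a removable root iff $k-\eta_z+z+1\le\ell$, and it is $(z,\,k-\eta_z+z+1)$ — precisely the box in which row $z$ of $\Delta^k(\eta)$ exceeds row $z$ of $\Delta^k(\eta-\epsilon_z)$. Hence $\Phi^z=\Delta^k(\eta-\epsilon_z)$ (including the case where row $z$ of $\Phi$ is empty, in which $\Phi^z=\Phi=\Delta^k(\eta-\epsilon_z)$), so $H(\Phi^z;\eta-\epsilon_z)=\fs^{(k)}_{\eta-\epsilon_z}$ whenever $\eta-\epsilon_z\in\Par^k_\ell$. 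For $z>p$ occurring in the restricted sum (so $\eta_z>\eta_{z+1}$) this membership is automatic, and those $z$ contribute exactly the terms appearing in \eqref{ec expand downpath}.

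I expect the vanishing step to be the main obstacle: one must show $H(\Phi^z;\eta-\epsilon_z)=0$ whenever $z$ occurs in the sum but $\eta-\epsilon_z\notin\Par^k_\ell$, so the index set collapses to $\{z\in\downpath_\Phi(p):\eta-\epsilon_z\in\Par^k_\ell\}$, matching \eqref{ec expand downpath}. Such a $z$ has either $z=\ell$ with $\eta_\ell=0$ — then the shifted weight has negative last coordinate and the Catalan function vanishes by Definition~\ref{d HH gamma Psi} — or $\eta_z=\eta_{z+1}$. In the latter case the removable root of $\Phi$ in row $z$ sits one column to the left of where row $z+1$ of $\Phi$ begins, so $\Phi^z$ has a wall in rows $z,z+1$, and since $(\eta-\epsilon_z)_z=(\eta-\epsilon_z)_{z+1}-1$, Lemma~\ref{l toggle lemma zero} kills the term once a ceiling in columns $z,z+1$ is exhibited; when the governing ceiling lies further left one instead invokes Lemma~\ref{l little lemma many zero 2} after checking the mirror conditions along the bounce path through $z$, which is where $\nr(\Phi)_i\ge r-i$ is used. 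For the $z$'s covered by Lemma~\ref{l expand downpath} this is all packaged inside that lemma, so there one simply cites it.

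Finally, Lemma~\ref{l expand downpath} is not always applicable, but a short analysis of $\nr(\Phi)_i=\min(\ell-i,k-\eta_i)$ shows its hypothesis \eqref{el expand 4} can fail only when $k-\eta_p=\ell-p-1$; then row $p$ of $\Phi$ consists of the single removable root $(p,\ell)$ and $\downpath_\Phi(p)=\{p,\ell\}$, so there is no interior $z$, and Lemma~\ref{c inductive computation atom down} alone gives $H(\Phi;\eta-\epsilon_p)=H(\Phi^p;\eta-\epsilon_p)+t\,H(\Phi;\eta-\epsilon_\ell)$; the two summands are identified by the recipe above (with the mirror analysis vacuous), which closes this case and recovers \eqref{ec expand downpath}.
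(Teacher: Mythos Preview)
Your proof is correct and follows the same route as the paper: apply Lemma~\ref{l expand downpath} to $(\Phi,\eta-\epsilon_p)$, identify $\Phi^z=\Delta^k(\eta-\epsilon_z)$ so that the surviving terms become $k$-Schur functions, and kill the $z=p$ term when $\eta-\epsilon_p\notin\Par^k_\ell$ via Lemma~\ref{l toggle lemma zero} (using $p<r$ and $\nr(\Phi)_i\ge r-i$ to produce the ceiling). You are in fact more careful than the paper in checking hypothesis~\eqref{el expand 4}: you correctly isolate the boundary case $k-\eta_p=\ell-p-1$, where $\downpath_\Phi(p)=\{p,\ell\}$, and dispose of it directly with Lemma~\ref{c inductive computation atom down}; the paper simply writes ``Lemma~\ref{l expand downpath} yields'' without addressing this.
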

\begin{proof}
Lemma~\ref{l expand downpath} yields
\begin{align}
\label{ec expand downpath}
H(\Phi ; \eta - \epsilon_p)
&= \sum_{\{z \in \downpath_\Phi(p) \, \mid \, z= p \text{ or }\eta -\epsilon_z \in \Par^k_\ell\}}
t^{B_\Phi(p,z)} H(\Phi^z ; \eta - \epsilon_z),
\end{align}
where  $\Phi^z$ is as in \eqref{el expand downpath}.
For the terms with $\eta -\epsilon_z \in \Par^k_\ell$, we have $\Phi^z = \Delta^k(\eta-\epsilon_z)$
and hence $ H(\Phi^z ; \eta - \epsilon_z) = \fs^{(k)}_{\eta-\epsilon_z}$.
It remains to show the $z=p$ term is 0 when $\eta-\epsilon_p \not \in \Par^k_\ell$.
If  $p = \ell$,  then $\eta_p = 0$ and
$\HH(\Phi^p ;\eta  - \epsilon_p)= 0$ follows directly from  Definition \ref{d HH gamma Psi}
since for  $\gamma \in \ZZ^\ell$, $s_\gamma =0$ whenever $\gamma_\ell < 0$ (see \eqref{ed s gamma}).
If  $p < \ell$, then $\eta_p = \eta_{p+1}$.
So $p<r$ by the second assumption and then $\Phi$ has a ceiling in columns $p,p+1$ by the first.
Thus $H(\Phi^p ; \eta - \epsilon_p) = 0$ by Lemma~\ref{l toggle lemma zero}.
\end{proof}

\section{Schur times $k$-Schur: proof of Theorem \ref{t schur kschur}}
\label{s schur kschur proof}

In Theorem \ref{t schur kschur flag} below, we give a positive combinatorial formula for the $k$-Schur expansion of a large class of Catalan functions
which extrapolate between the Catalan functions corresponding to Schur times  $k$-Schur
of  Theorem \ref{t schur kschur} and single $k$-Schur functions.
This result strengthens Theorem \ref{t schur kschur} in a way that allows for a proof by induction on the size of the root ideal.
We begin by introducing a generalization of the tableaux $\SSYT_{R/\mu}([r])$ from Theorem \ref{t schur kschur}
needed  for the more general Theorem~\ref{t schur kschur flag}.


\subsection{Flagged tableaux}


A \emph{pseudopartition}  of length  $r$ is a sequence
\[\text{$\alpha = (\alpha_1, \dots, \alpha_r) \in \ZZ^r_{\ge 0}$ \quad satisfying  \, $\alpha_1 + r-1 \ge \alpha_2 + r-2 \ge \dots \ge \alpha_r$\,.}\]
For a pseudopartition  $\alpha$ of length  $r$  with $m= \max(\alpha)$, define
\[\diagram(\alpha)
:=
\big\{(i,j) \in [r] \times [m] \mid m - \alpha_i < j \le m \big\}.
\]

\begin{definition}
\label{d CTAB}
For a pseudopartition $\alpha$ of length $r$  and flagging $\mathbf{n} = (n_1,\dots,n_r) \in [r]^r$, define $\CTAB_\alpha(\mathbf{n})$ to be the set of
fillings of $\diagram(\alpha)$ with integers
which are (I) weakly increasing left to right along rows and
(II) strictly increasing down columns, and satisfy the following flag conditions:
(III) entries in row $i$ lie in $\{n_i,n_i+1,\dots, r\}$, and \linebreak[3] (IV) if   $i \in \{2,3,\dots, r\}$,
$(i-1,j) \in  \diagram(\alpha)$, and $(i,j) \not \in  \diagram(\alpha)$, then  the entry in box $(i-1,j)$ is less than $n_{i}$.
In the case  $\alpha = 0^r$ (and $\mathbf{n}$ any element of $[r]^r$),
$\CTAB_{\alpha}(\mathbf{n})$ consists of a single tableau with no boxes.

For $T \in \CTAB_\alpha(\mathbf{n})$, the \emph{column reading word} of $T$, denoted $\creading(T)$, is the word obtained by concatenating the columns of $T$ (reading each column bottom to top), starting with the leftmost column.

If $T \in \CTAB_\alpha(\mathbf{n})$, $x \in \ZZ$, and  $i \in [\ell(\alpha)]$, then ${\tiny \mytableau{x}}_{\, i} \sqcup T$
denotes the filling of  $\diagram(\alpha+\epsilon_i)$ 
obtained from  $T$ by adding a box with entry  $x$ to row  $i$ on the left.
\end{definition}

\begin{example}
With  $r=4$, $\alpha = 2121$, and $\mathbf{n} = 1224$, we have
\[\CTAB_\alpha(\mathbf{n}) =
\fontsize{6pt}{5pt}\selectfont
\Bigg\{ \
\tableau{1 & 1 \\\bl & 2 \\ 2 & 3 \\ \bl  & 4}\quad \ \
\tableau{1 & 1 \\\bl & 2 \\ 3 & 3 \\ \bl  & 4} \ \Bigg\}.
\]
See also Example \ref{ex proof schur kschur}.
If  $T$ is the tableau on the left, then ${\tiny \mytableau{2}}_{\, 3} \sqcup T = \fontsize{6pt}{5pt}\selectfont \tableau{\bl & 1 & 1 \\\bl& \bl & 2 \\ 2& 2 & 3 \\ \bl& \bl  & 4}$.
\end{example}

\begin{proposition}
\label{p flag decomposition tableau}
Let  $i \in [r]$. Let $\alpha$ and $\alpha^- = \alpha - \epsilon_{i}$ be pseudopartitions of length $r$ with  $\alpha_i \ge \alpha_{i-1}$.
Let $\mathbf{n} = (n_1,n_2,\dots,n_r) \in [r]^r$ and $\mathbf{n}^+ = \mathbf{n} + \epsilon_i$ be weakly increasing.
Then
\[\CTAB_\alpha(\mathbf{n}) = \CTAB_\alpha(\mathbf{n}^+) \sqcup \big\{ \, {\tiny \mytableau{n_i}}_{\, i} \sqcup U : U \in \CTAB_{\alpha^-}(\mathbf{n}) \big\}.\]
\end{proposition}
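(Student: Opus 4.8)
The plan is to classify each $T \in \CTAB_\alpha(\mathbf{n})$ by the entry $x$ in the leftmost box of its $i$-th row, and to match the two cases $x > n_i$ and $x = n_i$ with the two pieces on the right-hand side. Note first that row $i$ of $\diagram(\alpha)$ is nonempty: since $\alpha^- = \alpha - \epsilon_i$ is a pseudopartition of length $r$ we have $\alpha^-_i = \alpha_i - 1 \ge 0$, so $\alpha_i \ge 1$. The key preliminary observation is that the hypothesis $\alpha_i \ge \alpha_{i-1}$ makes condition (IV) \emph{for the index value $i$} vacuous for the shape $\alpha$: a box $(i-1,j)\in\diagram(\alpha)$ with $(i,j)\notin\diagram(\alpha)$ would require $m-\alpha_{i-1} < j \le m-\alpha_i$, hence $\alpha_i < \alpha_{i-1}$, which is false. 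Consequently the flag value $n_i$ enters the defining conditions of $\CTAB_\alpha(\mathbf{n})$ only through condition (III) for row $i$ (``entries of row $i$ lie in $\{n_i,\dots,r\}$''); every other requirement defining $\CTAB_\alpha(\mathbf{n})$ is literally the same as the corresponding requirement defining $\CTAB_\alpha(\mathbf{n}^+)$, because $\mathbf{n}$ and $\mathbf{n}^+$ agree away from position $i$ and the shape $\alpha$ is the same.

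Granting this, I would argue as follows. For $T \in \CTAB_\alpha(\mathbf{n})$, condition (III) gives $x \ge n_i$. If $x \ge n_i+1$, then by weak increase along row $i$ every entry of row $i$ is $\ge n_i+1$, so condition (III) for row $i$ holds with $n_i$ replaced by $n_i+1$, and hence $T \in \CTAB_\alpha(\mathbf{n}^+)$; conversely any $T \in \CTAB_\alpha(\mathbf{n}^+)$ lies in $\CTAB_\alpha(\mathbf{n})$ and has $x \ge n_i+1$. Thus $\CTAB_\alpha(\mathbf{n}^+)$ is exactly the set of $T$ with $x > n_i$, and its complement $\mathcal{T}_0$ in $\CTAB_\alpha(\mathbf{n})$ is the set of $T$ with $x = n_i$.

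It then remains to exhibit a bijection between $\mathcal{T}_0$ and $\bigl\{\, {\tiny \mytableau{n_i}}_{\, i}\sqcup U : U \in \CTAB_{\alpha^-}(\mathbf{n}) \bigr\}$. The candidate is the obvious one: delete (resp. prepend) the leftmost box of row $i$. Since $\diagram(\alpha)$ and $\diagram(\alpha^-)$ differ only in row $i$ (right-justified, one box shorter), this is clearly a mutually inverse pair of maps between fillings of the two shapes, so the only thing to check is well-definedness in each direction. For $T\mapsto U$: conditions (I)--(III) for $U$ are immediate from those for $T$, and for condition (IV) the only interfaces whose overhang cells change are $i{-}1/i$ and $i/i{+}1$. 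At $i/i{+}1$ the overhang cells of $\alpha^-$ form a subset of those of $\alpha$ (using $\alpha_{i+1}\le\alpha_i$, which follows from $\alpha^-$ being a pseudopartition), so the conditions are inherited from $T$. At $i{-}1/i$ a new overhang cell appears only when $\alpha_{i-1}=\alpha_i$, and then it is the leftmost box of row $i-1$, which in $T$ sits directly above the deleted box of entry $n_i$; column-strictness in $T$ already forces its entry to be $<n_i$, which is exactly condition (IV) for $U$. For $U\mapsto {\tiny \mytableau{n_i}}_{\, i}\sqcup U$: conditions (I), (III) and (IV) at interface $i{-}1/i$ (vacuous) are easy; condition (IV) at $i/i{+}1$ holds because the new box has entry $n_i < n_{i+1}$ by $\mathbf{n}^+$ weakly increasing, while the remaining overhang cells there come from $U$; and column-strictness at the new box holds because the box above it (present only if $\alpha_{i-1}=\alpha_i$) has entry $<n_i$ by condition (IV) for $U$ at interface $i{-}1/i$, and the box below it (present only if $\alpha_{i+1}=\alpha_i$) has entry $\ge n_{i+1}\ge n_i+1$ by condition (III) for $U$ together with $\mathbf{n}^+$ weakly increasing. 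Combining the two cases gives the asserted disjoint decomposition.

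\textbf{Main obstacle.} I expect the real work to be the last step's bookkeeping: one must track exactly how the shape near rows $i-1,i,i+1$ and their overhang cells change under $\alpha \leftrightarrow \alpha^- = \alpha-\epsilon_i$, and verify that each hypothesis does precise duty ($\alpha_i\ge\alpha_{i-1}$ to kill condition (IV) at $i{-}1/i$ for $\alpha$ and to control the box above the new one; $\alpha^-$ a pseudopartition to give $\alpha_{i+1}\le\alpha_i$; $\mathbf{n}^+$ weakly increasing to give $n_i<n_{i+1}$ for the box below). Everything else is routine transfer of conditions (I)--(IV) across a filling that differs in a single box.
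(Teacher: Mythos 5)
Your proof is correct and follows essentially the same strategy as the paper's: classify $T \in \CTAB_\alpha(\mathbf{n})$ by whether the leftmost entry of row $i$ equals $n_i$, identify the first case with $\CTAB_\alpha(\mathbf{n}^+)$, and match the second case to $\CTAB_{\alpha^-}(\mathbf{n})$ via deleting/prepending that box. You simply carry out the (I)--(IV) bookkeeping at the interfaces $i{-}1/i$ and $i/i{+}1$ more explicitly than the paper's terse version.
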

\begin{proof}
Consider  $T \in \CTAB_\alpha(\mathbf{n})$.
It either contains $n_i$ in the leftmost box in the $i$-th row or not.
If it contains  $n_i$, then removing this box yields
an element  $T^-$ of $\CTAB_{\alpha^-}(\mathbf{n})$ (note $T^-$ satisfies (IV) by column strictness of $T$), and if not, then $T \in \CTAB_\alpha(\mathbf{n}^+)$.
This establishes the inclusion  $\subset$.
To show the
inclusion $\supset$, we show that $\CTAB_\alpha(\mathbf{n}^+)$  and
$\big\{{\tiny \mytableau{n_i}}_{\, i} \sqcup U : U \in \CTAB_{\alpha^-}(\mathbf{n}) \big\}$
are each contained in  $\CTAB_\alpha(\mathbf{n})$.
The former comes down to the following:
$\alpha_i \ge \alpha_{i-1}$ implies that condition (IV) for $T \in \CTAB_\alpha(\mathbf{n}^+)$ yields condition (IV) for  $T$ considered as an element of $\CTAB_\alpha(\mathbf{n})$.
For the latter, let $T^- \in \CTAB_{\alpha^-}(\mathbf{n})$;
we must verify conditions (I)--(IV) for ${\tiny \mytableau{n_i}}_{\, i} \sqcup T^-$:
(I) and (III) are straightforward and (IV) follows from  $n_i < n_{i+1}$;
(II) follows from  $n_i < n_{i+1}$ and condition (IV) for $T^-$.
\end{proof}


\subsection{A generalization of Schur times  $k$-Schur}

For a sequence $\beta \in \ZZ^r$, define $\crop(\beta) \in \ZZ^r$ to be the partition given by
\[\crop(\beta)_i \, := \, \min\{\beta_j  \mid  j \in [i]\}.\]
For example, $\crop(45653213) = 44443211$.

\begin{theorem}
\label{t schur kschur flag}
Fix positive integers  $k,\ell$ and $r \in [\ell]$. Let  $(\Psi, \mu\nu)$ be an indexed root ideal of length  $\ell$ satisfying
\begin{align}
\label{et hyp 1}
&\text{$\mu$ is a pseudopartition of length  $r$;}\\
\label{et hyp 2}
&\text{$\nu \in \Par^k_{\ell-r}$ and $\mu_r \ge \nu_1$;}\\
\label{et hyp 4}
&\text{$\nr(\Psi)_i \ge r-i$ for $i \in [r]$;}\\
\label{et hyp 3}
&\text{$\style(\Psi, \mu\nu)_i \le k$ for  $i \le r$ \ and \ $\style(\Psi, \mu\nu)_i = \min(k, \ell-i+(\mu\nu)_i)$ for  $i > r$.}
\end{align}
Define $\zeta = \big(k-\nr(\Psi)_1, k-\nr(\Psi)_2, \dots, k-\nr(\Psi)_{r}\big)$,
$\lambda = \crop(\zeta)$,
and  $\alpha = \lambda-\mu$.
Let $\mathbf{n} = (n_1, \dots, n_r) \in [r]^r$ be the sequence given by $n_i = i - (\zeta_i- \lambda_i)$.
Also assume
\begin{align}
\label{et hyp 5}
 \text{$\alpha_i \ge 0$ for all  $i \in [r]$. \hspace{1.4cm} }
\end{align}
Then
\begin{align}
\label{et schur kschur flag}
H(\Psi;  \mu \nu) = \sum_{T \in \CTAB_\alpha(\mathbf{n})} \fs^{(k)}_{\lambda \nu} \cdot  u_{\creading(T)}\, .
\end{align}
\end{theorem}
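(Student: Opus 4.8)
The plan is to prove the stronger Theorem~\ref{t schur kschur flag} by induction on $|\Psi|$, the number of roots of $\Psi$ (with an auxiliary induction on the weight, explained below), and to obtain Theorem~\ref{t schur kschur} as the instance $\Psi=\varnothing_r\uplus\Delta^k(\nu)$. Note first that both sides of \eqref{et schur kschur flag} lie in $\Lambda^k$: the right side by construction of the strong Pieri operators, the left side by Proposition~\ref{p style k space} applied to \eqref{et hyp 3}. Hypotheses \eqref{et hyp 1}--\eqref{et hyp 5} are exactly those in the downpath lemma (Lemma~\ref{l expand downpath}) and the flagged-tableau decomposition (Proposition~\ref{p flag decomposition tableau}), and the proof plays these two statements against one another. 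For the base case we take the (unique, by \eqref{et hyp 3}--\eqref{et hyp 4}) root ideal with the most roots, $\Psi=\varnothing_r\uplus\Delta^k(\nu)$: here $\nr(\Psi)_i=r-i$, so $\style(\Psi,\mu\nu)_i=\mu_i+r-i\le k$, whence $\zeta=(k-r+1,\dots,k)$, $\lambda=\crop(\zeta)=(k-r+1)^r=:U$, $\alpha=U-\mu$, and $\mathbf n=1^r$; by Proposition~\ref{p Catalan box times} and \eqref{e Catalan op eq fun}, $H(\Psi;\mu\nu)=\bb_\mu\mysquareb\fs^{(k)}_\nu$, so \eqref{et schur kschur flag} becomes Theorem~\ref{t schur kschur} once $\CTAB_{U-\mu}(1^r)$ is identified with $\SSYT_{U/\mu}([r])$ compatibly with column reading words. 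I would establish this identity by a secondary induction on $r$: the case $r=1$ is Corollary~\ref{c Br on k schur}, and the step from $r-1$ to $r$ peels off the outer Garsia--Jing operator $\bb_{\mu_1}$, commutes it through the strong Pieri operators appearing in the $r-1$ formula using Lemma~\ref{l commute with square}, and repackages the result via the $i=1$ case of Proposition~\ref{p flag decomposition tableau}---the same mechanism used to prove Theorem~\ref{t HL to k schur intro}.

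For the inductive step, assume $\Psi\ne\varnothing_r\uplus\Delta^k(\nu)$. If $\style(\Psi,\mu\nu)_i=k$ for every $i\le r$, then $\Psi$ agrees with $\Delta^k(\mu\nu)$ in its first $r$ rows, $\mu$ is a partition, $\alpha=0$, $\CTAB_0(\mathbf n)$ is a single empty tableau, and $H(\Psi;\mu\nu)=\fs^{(k)}_{\mu\nu}$ follows by iterating the cascading toggle lemma (Lemma~\ref{l cascading toggle lemma}); this proves \eqref{et schur kschur flag}. Otherwise pick $p\in[r]$ suitably---heuristically, $p$ the largest index with $\style(\Psi,\mu\nu)_p<k$---and verify, from \eqref{et hyp 4} and the pseudopartition inequality on $\mu$, that $p$ meets the hypotheses of Lemma~\ref{l expand downpath}. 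That lemma then gives
\[
H(\Psi;\mu\nu)=\sum_{\{z\in\downpath_\Psi(p)\,\mid\,z=p\text{ or }(\mu\nu)_z>(\mu\nu)_{z+1}\}} t^{B_\Psi(p,z)}\,H\big(\Psi^z;\,\mu\nu+\epsilon_p-\epsilon_z\big),
\]
and since $\down_\Psi(p)>r$ by \eqref{et hyp 4} every $z$ in the sum satisfies $z=p$ or $z>r$. The term $z=p$ is $H(\Psi\setminus\{(p,\down_\Psi(p))\};\mu\nu)$; deleting this root leaves $\lambda$ (hence $\alpha$) fixed and raises $n_p$ by one, so by the inductive hypothesis this term equals $\sum_{T\in\CTAB_\alpha(\mathbf n^+)}\fs^{(k)}_{\lambda\nu}\cdot u_{\creading(T)}$ with $\mathbf n^+=\mathbf n+\epsilon_p$. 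For each $z>r$, deleting a root from row $z$ fixes $\lambda$ and $\mathbf n$ in the first $r$ rows while the shift $\epsilon_p-\epsilon_z$ turns $\alpha$ into $\alpha-\epsilon_p$; applying the inductive hypothesis and then \eqref{ec flm k schur} and Corollary~\ref{c expand downpath} to absorb the shift $\epsilon_p$ into the strong Pieri operators as a leading letter $u_{n_p}$, the sum over $z>r$ becomes $\sum_{U\in\CTAB_{\alpha-\epsilon_p}(\mathbf n)}\fs^{(k)}_{\lambda\nu}\cdot u_{\creading({\tiny \mytableau{n_p}}_{\, p}\sqcup U)}$. Proposition~\ref{p flag decomposition tableau} with split index $p$ recombines the pieces $\CTAB_\alpha(\mathbf n^+)$ and $\{\,{\tiny \mytableau{n_p}}_{\, p}\sqcup U:U\in\CTAB_{\alpha-\epsilon_p}(\mathbf n)\,\}$ into $\CTAB_\alpha(\mathbf n)$, yielding \eqref{et schur kschur flag}. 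When $\chaindown_\Psi(p)$ appears in the sum the corresponding $\Psi^z$ equals $\Psi$, and this single term is handled by the auxiliary induction on the weight (after checking it stays in the class, or else vanishes by a mirror lemma). The degenerate cases $n_p=p$, and the terms whose shifted weight violates \eqref{et hyp 1} or \eqref{et hyp 5}, are disposed of using the mirror lemmas (Lemmas~\ref{l toggle lemma zero} and~\ref{l little lemma many zero 2}), which force the offending Catalan functions to vanish.

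The step I expect to be the main obstacle is this termwise matching: verifying that hypotheses \eqref{et hyp 1}--\eqref{et hyp 5} pass to each $(\Psi^z,\mu\nu+\epsilon_p-\epsilon_z)$ with nonzero Catalan function, determining exactly which summands of the downpath expansion vanish and citing the correct mirror lemma for each, and confirming that the combinatorial decomposition of $\CTAB_\alpha(\mathbf n)$ from Proposition~\ref{p flag decomposition tableau} is synchronized with the analytic one from Lemma~\ref{l expand downpath}---in particular that deleting the root $(p,\down_\Psi(p))$ effects exactly the change $\zeta\mapsto\zeta-\epsilon_p$, $\lambda\mapsto\lambda$, $\mathbf n\mapsto\mathbf n^+$ dictated by the split index $p$, and that the downpath of $p$ in $\Psi$ matches, up to the evident shift, the downpath of $n_p$ in $\Delta^k(\lambda\nu)$, so that the leading letter $u_{n_p}$ reproduces precisely the $z>r$ terms. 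The reason to prove the flagged generalization rather than Theorem~\ref{t schur kschur} directly is that this flagged class of Catalan functions is closed under the single-root deletions powering the induction, while the class in Theorem~\ref{t schur kschur} is not.
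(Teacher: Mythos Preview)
Your induction is set up in the wrong direction. You take as base case the root ideal with the \emph{most} roots, $\Psi=\varnothing_r\uplus\Delta^k(\nu)$, yet your inductive step (the downpath lemma) produces root ideals $\Psi^z$ with \emph{fewer} roots. The recursion therefore never reaches your base case. In the paper the flow is the opposite: the base case is the \emph{smallest} admissible root ideal $\Psi=\Delta^k(\lambda\nu)$ together with $\alpha=0$, where \eqref{et schur kschur flag} reads $H(\Psi;\mu\nu)=\fs^{(k)}_{\lambda\nu}$ trivially, and the induction (on $|\Psi|+|\alpha|$) builds up from there. Theorem~\ref{t schur kschur} is then a \emph{corollary} of Theorem~\ref{t schur kschur flag}, obtained by specializing to the maximal $\Psi$, not the starting point of its proof. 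Your proposed independent proof of Theorem~\ref{t schur kschur} by induction on $r$ using Lemma~\ref{l commute with square} does not obviously go through either: that lemma moves a $\bb_U$ for a full $k$-rectangle $U$ past strong Pieri operators, not a single $\bb_{\mu_1}$.

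There is a second gap in the handling of the $z=p$ term. You assert that deleting $(p,\down_\Psi(p))$ leaves $\lambda$ fixed and raises $n_p$ by one, but this is only the case when $\zeta_p>\lambda_p$ (equivalently $n_p<p$). When $n_p=p$, deleting this root sends $\zeta\mapsto\zeta-\epsilon_p$ with $\zeta_p=\lambda_p$, so $\lambda^-=\crop(\zeta-\epsilon_p)\ne\lambda$. The paper splits this into two subcases: if $\lambda^-$ differs from $\lambda$ in more than one position the term vanishes by Lemma~\ref{l toggle lemma zero}, but if $\lambda^-=\lambda-\epsilon_p$ the term is genuinely nonzero and contributes the $z=p$ summand of \eqref{e case 2 LHS final} with $\fs^{(k)}_{\lambda^-\nu}$ in place of $\fs^{(k)}_{\lambda\nu}$. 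Your proposal treats all $n_p=p$ situations as degenerate vanishing cases, which is incorrect. Relatedly, the paper's choice $p=\max\{j\in[r]:\alpha_j=\max(\alpha)\}$ is not the same as ``largest index with $\style(\Psi,\mu\nu)_p<k$''; the former is what makes the hypothesis $\alpha_i\ge\alpha_{i-1}$ of Proposition~\ref{p flag decomposition tableau} hold at $i=p$ and what feeds condition~\eqref{el expand 4} of Lemma~\ref{l expand downpath}.
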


Note that  $\zeta \in \ZZ^r_{\ge 0}$ by \eqref{et hyp 3}
and then $\lambda\nu\in \Par^k_\ell$ since  $\lambda_1 = \zeta_1 \le  k$
and  $\lambda_r \ge \mu_r \ge \nu_1$.
Also  $\mathbf{n} \in [r]^r$ as required by Definition \ref{d CTAB} since  $n_1 = 1 - (\zeta_1 - \lambda_1)= 1$
and  $\mathbf{n}$ is weakly increasing by Proposition \ref{p schur kschur flag helper} (e) below.

Let us now see how Theorem \ref{t schur kschur} is obtained as a special case of Theorem \ref{t schur kschur flag}:
let  $\mu \in \Par^{k-r+1}_r$
and  $\nu \in \Par^k$ be as in Theorem \ref{t schur kschur};
set  $\ell = \max(\ell(\mu\nu), r)$ so that  $\nu \in \Par^k_{\ell-r}$ and  $\mu\nu \in \Par^{k-r+1}_{\ell}$
(note that we allow  $\nu = \emptyset$).
Set $\Psi = \varnothing_r \uplus \Delta^k(\nu)$.
With this input to Theorem~\ref{t schur kschur flag}, we have
$\lambda = (k-r+1)^r$
and  $\mathbf{n} = (1, \dots, 1)$.
The latter implies $\CTAB_\alpha(\mathbf{n}) = \SSYT_{\lambda/\mu}([r])$,
which matches the right side of \eqref{et schur kschur} with the right side of \eqref{et schur kschur flag};
the left sides match by Proposition \ref{p Catalan box times} and \eqref{e Catalan op eq fun}.
The hypotheses of Theorem \ref{t schur kschur flag} are satisfied:
\eqref{et hyp 1}--\eqref{et hyp 4} are clear,
\eqref{et hyp 3} holds by definition of  $\Psi$ and  $\mu_1 \le k-r+1$, and
 \eqref{et hyp 5} holds by $\alpha_i = \lambda_i - \mu_i \ge k-r+1-\mu_1 \ge 0$.

In the next proposition, we handle several technical preliminaries
for the proof of \break Theorem~\ref{t schur kschur flag}.
For statement (a), we define the root ideal $\Delta^k(\gamma)$ for pseudopartitions $\gamma \in \ZZ^\ell_{\le k}$
just as in Definition \ref{d0 catalan kschur}.

\begin{proposition}
\label{p schur kschur flag helper}
Maintain the notation of Theorem \ref{t schur kschur flag}.
Set $\Phi = \Delta^k(\lambda\nu)$.
\begin{list}{\emph{(\alph{ctr})}} {\usecounter{ctr} \setlength{\itemsep}{1pt} \setlength{\topsep}{2pt}}
\item $\Delta^k(\lambda\nu) = \Phi \subset \Psi = \Delta^k(\zeta\nu)$.
\item For $j \in [r-1]$, \  $\zeta_j < \zeta_{j+1}$  $\iff$  \hspace{-1mm} $\Psi$ has a wall in rows $j,j+1$.
\item For $j \in [r-1]$,  $\alpha_j > \alpha_{j+1}$ and  $\zeta_j \le \zeta_{j+1}$ imply $\mu_j = \mu_{j+1}-1$ and $\alpha_j = \alpha_{j+1}+1$.
\item $(r, \down_\Psi(r))$ is a removable root of  $\Psi$ provided  $\ell > r+k$.
\item The sequence $\mathbf{n}$ is weakly increasing.
\item $\lambda_{n_p} = \lambda_{n_p+1} = \dots = \lambda_{p}$.
\item $\downpath_\Phi(n_p)\setminus \{n_p\} = \downpath_\Psi(p) \setminus \{p\}$ provided  $\ell > r+k$.
\end{list}
\end{proposition}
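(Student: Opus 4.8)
The plan is to reduce everything to two facts: a root ideal of length $\ell$ is determined by its sequence of non-root counts $\bigl(\nr(\Psi)_1,\dots,\nr(\Psi)_\ell\bigr)$, with $\nr\bigl(\Delta^k(\gamma)\bigr)_i=\min(k-\gamma_i,\ell-i)$ for every pseudopartition $\gamma\in\ZZ^\ell_{\le k}$; and $\lambda=\crop(\zeta)$ is the running minimum of $\zeta$, so $\lambda_{i+1}=\min(\lambda_i,\zeta_{i+1})$ and $\lambda$ drops only at indices $i$ with $\lambda_i=\zeta_i$. For (a): by definition $\nr(\Psi)_i=k-\zeta_i$ for $i\le r$, which equals $\nr\bigl(\Delta^k(\zeta\nu)\bigr)_i$ since $k-\zeta_i\le\ell-i$ by \eqref{et hyp 4}; for $i>r$, \eqref{et hyp 3} gives $\nr(\Psi)_i=\min(k,\ell-i+\nu_{i-r})-\nu_{i-r}=\min(k-\nu_{i-r},\ell-i)$, again matching. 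Thus $\Psi=\Delta^k(\zeta\nu)$, and since $\lambda_i\le\zeta_i$ while $\lambda\nu$ agrees with $\zeta\nu$ in positions $>r$, each row of $\Delta^k(\lambda\nu)$ starts its roots no earlier than the corresponding row of $\Psi$, so $\Phi\subset\Psi$.

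Parts (b), (c), (e), (f) are then short computations. For (b): row $j\le r$ of $\Psi$ has $\ell-j-(k-\zeta_j)$ boxes, so a wall in rows $j,j+1$ means $\zeta_{j+1}=\zeta_j+1$, which by the root-ideal inequality $\zeta_{j+1}\le\zeta_j+1$ is the same as $\zeta_j<\zeta_{j+1}$. For (c): $\lambda_j\le\zeta_j\le\zeta_{j+1}$ forces $\lambda_{j+1}=\min(\lambda_j,\zeta_{j+1})=\lambda_j$, so $\alpha_j-\alpha_{j+1}=\mu_{j+1}-\mu_j$, which is $\ge1$ by hypothesis and $\le1$ by the pseudopartition condition on $\mu$, hence equals $1$. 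For (e): if $\zeta_{i+1}\ge\lambda_i$ then $\lambda_{i+1}=\lambda_i$ and $n_{i+1}-n_i=1-(\zeta_{i+1}-\zeta_i)\ge0$, while if $\zeta_{i+1}<\lambda_i$ then $\lambda_{i+1}=\zeta_{i+1}$ so $n_{i+1}=i+1\ge n_i$. For (f): let $q$ be minimal with $\lambda_q=\lambda_p$, note $\zeta_q=\lambda_q=\lambda_p$ and $\zeta_p-\zeta_q\le p-q$, whence $n_p=p-(\zeta_p-\zeta_q)\ge q$, so $\lambda$ is constant on $[n_p,p]\subseteq[q,p]$. For (d): by (a) the only candidate removable root of $\Psi$ in row $r$ is $(r,c)$ with $c=r+(k-\zeta_r)+1\le r+k+1\le\ell$ (using $\ell>r+k$, $\zeta_r\ge0$), so row $r$ does contain a root; and $(r,c)$ is removable iff $(r+1,c)\notin\Psi$, i.e.\ iff $\nr(\Psi)_r\le\nr(\Psi)_{r+1}$, which holds because $\nr(\Psi)_r=k-\zeta_r\le k-\nu_1$ (as $\zeta_r\ge\lambda_r\ge\mu_r\ge\nu_1$ by \eqref{et hyp 5}, \eqref{et hyp 2}) and $k-\zeta_r\le k\le\ell-r-1$, so $k-\zeta_r\le\min(k-\nu_1,\ell-r-1)=\nr(\Psi)_{r+1}$.

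The main obstacle is (g). The idea is that after the first bounce step the two downpaths coincide, because $\Phi=\Psi$ on all rows of index $>r$. Using (f), the first root column of row $n_p$ in $\Phi$ is $n_p+(k-\lambda_{n_p})+1=(p-\zeta_p+\lambda_p)+(k-\lambda_p)+1=p+(k-\zeta_p)+1$, exactly the first root column of row $p$ in $\Psi$; and $\ell>r+k$ keeps it within $[\ell]$ (as in (d)). I would then induct along the path, showing that each row $i\le r$ met by $\downpath_\Psi(p)$ is matched by $n_i$ in $\downpath_\Phi(n_p)$ with equal removable-root columns --- comparing the walls and ceilings of $\Phi$ against those of $\Psi$ via (b) and $\mu_i\le\lambda_i\le\zeta_i$ (from \eqref{et hyp 5}), and tracking $i\mapsto n_i$ via (e)--(f) --- until the path enters rows $>r$, after which the two tails are literally identical. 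I expect the careful bookkeeping here, especially at places where $i\mapsto n_i$ fails to be injective and where a removable root of $\Psi$ disappears (or appears) upon passing to $\Phi$, to be where essentially all the work lies; parts (a)--(f) are routine once the two facts above are in hand.
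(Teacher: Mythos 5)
Parts (a)--(f) are fine and essentially identical to the paper's verifications; the two facts you isolate at the start (the formula $\nr(\Delta^k(\gamma))_i = \min(k-\gamma_i,\ell-i)$ and the running-minimum description of $\crop$) are exactly what the paper leans on. One small note: for (a) you are more careful than the paper, which asserts $\nr(\Phi)_i = k-\lambda_i$ without the $\min$; your version (showing $\nr(\Phi)_i \ge \nr(\Psi)_i$ by comparing mins) is cleaner but gives the same conclusion.

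For (g), your calculation $\down_\Phi(n_p) = n_p + (k-\lambda_{n_p}) + 1 = p + (k-\zeta_p) + 1 = \down_\Psi(p)$ is precisely the paper's proof, and you are essentially done at that point. The ``induction along the path'' you then propose, together with the worries about injectivity of $i\mapsto n_i$ and appearing/disappearing removable roots, is unnecessary and reflects a missed observation: hypothesis \eqref{et hyp 4} gives $\nr(\Psi)_p \ge r-p$, so $\down_\Psi(p) = p + \nr(\Psi)_p + 1 \ge r+1$, and likewise $\nr(\Phi)_{n_p} = k-\lambda_{n_p} \ge k-\zeta_{n_p} = \nr(\Psi)_{n_p} \ge r-n_p$, so $\down_\Phi(n_p) \ge r+1$. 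In other words, each downpath contains exactly one index in $[r]$ (namely $p$ resp.\ $n_p$), and after a single $\down$ step both paths land in rows $> r$, where $\Phi$ and $\Psi$ literally coincide (by \eqref{et hyp 3}); the remaining tails are then identical. So (g) is as short as the other parts --- your estimate that ``essentially all the work'' lies there, and the bookkeeping you anticipate, come from not noticing that the downpath never revisits a row $\le r$.

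One caveat shared with the paper: as stated, (g) implicitly assumes $\down_\Psi(p)$ is defined (equivalently, that $(p,\down_\Psi(p))$ is a removable root). This can fail for arbitrary $p\in[r]$, but in the proof of Theorem \ref{t schur kschur flag} the relevant case is ruled out before (g) is invoked, so neither you nor the paper loses anything.
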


\begin{proof}
We first prove (a).  We have $\Delta^k(\zeta\nu) = \Psi$ by \eqref{et hyp 3} and the definitions of  $\zeta$ and  $\Delta^k$. By definition of  $\crop$,  $\zeta_i \ge \lambda_i$ for all $i \in [r]$.
Then by definition of  $\Delta^k$, \break
$\nr(\Phi)_i  = k-\lambda_i \ge k- \zeta_i = \nr(\Psi)_i$ for $i \in [r]$.
Hence $\Phi \subset \Psi$.
Statement (b) is clear from the fact that $\Psi$ is a root ideal.
For (c), $\zeta_j \le \zeta_{j+1}$ and the definition of  $\crop$ imply  $\lambda_j = \lambda_{j+1}$.
Then $\alpha_j > \alpha_{j+1}$ and  $\mu = \lambda-\alpha$ yield $\mu_j < \mu_{j+1}$. Since $\mu$ is a pseudopartition, we obtain
$\mu_j = \mu_{j+1}-1$ and $\alpha_j = \alpha_{j+1}+1$.
Statement  (d) follows from  $\mu_r \ge \nu_1$ and $\style(\Psi, \mu\nu)_r \le k= \style(\Psi, \mu\nu)_{r+1}$, where the latter holds by
 \eqref{et hyp 3} and  $\ell > r+k$.

Let  $i \in [r-1]$.  Since  $\nr(\Psi)_{i+1} \ge \nr(\Psi)_i - 1$,
we have  $\zeta_{i+1} - \zeta_i \le 1$ and hence
\[ \zeta_{i+1} - \lambda_{i+1}-(\zeta_i - \lambda_i)
  =
  \begin{cases}
  -(\zeta_i - \lambda_i) & \text{ if  $\zeta_{i+1} = \min\{\zeta_j \mid j \in [i+1] \} = \lambda_{i+1}$} \\
  \zeta_{i+1} - \zeta_i & \text{ if  $\zeta_{i+1} > \min\{\zeta_j \mid j \in [i]\} = \lambda_i$}
  \end{cases}
\]
is also $\le 1$. Statement (e) follows.

Let  $a \in [p]$ be any index such that  $\zeta_a = \lambda_p := \min_{j \in [p]}\{\zeta_j\}$.  Then  $\lambda$ is constant on the interval  $\{a,a+1,\dots, p\}$.
Since $\zeta_{p+1} - \zeta_p \le 1$ and  $\zeta_p - \zeta_a \le p-a$, there holds
\[n_p = p - (\zeta_p- \lambda_p) = p - \zeta_p + \zeta_a \ge a. \]
Statement (f) follows.

Since $\Phi$ and $\Psi$ agree in rows $>r$, statement (g) follows from the computation
\[\down_\Phi(n_p)-1 =  n_p +\nr(\Phi)_{n_p}= p-(\zeta_p-\lambda_p) + k- \lambda_{n_p}  = p + k-\zeta_p =  p + \nr(\Psi)_p = \down_\Psi(p)-1, \]
where the third equality is by (f).
\end{proof}

We also need the following result to adjust trailing zeros of  $\nu$ in the proof below.
\begin{proposition}[{\cite[Proposition 4.12]{BMPS}}]
\label{p trailing zeros}
If $(\Psi,\gamma)$ is an indexed root ideal
of length  $\ell$ with  $\gamma_\ell = 0$,  then $H(\Psi;\gamma) = H\big(\Psi \cap \Delta^+_{\ell-1} ;(\gamma_1,\dots, \gamma_{\ell-1})\big)$.
 \end{proposition}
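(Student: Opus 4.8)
The plan is to separate the raising operators in $H(\Psi;\gamma) = \prod_{(i,j)\in\Psi}(1-tR_{ij})^{-1} s_\gamma$ according to whether or not they involve the last index $\ell$, and to show that the ones involving $\ell$ act trivially once $\gamma_\ell = 0$. Concretely, I would write $\Psi' = \Psi \cap \Delta^+_{\ell-1}$ and $S = \{i \in [\ell-1] : (i,\ell) \in \Psi\}$, so $\Psi = \Psi' \sqcup \{(i,\ell) : i \in S\}$. Since the raising operators commute (they are translations of the weight lattice), the operator $\prod_{(i,j)\in\Psi}(1-tR_{ij})^{-1}$ factors — in the precise sense of \cite[\S4.3]{BMPS} — as $\big(\prod_{i\in S}(1-tR_{i\ell})^{-1}\big)\big(\prod_{(i,j)\in\Psi'}(1-tR_{ij})^{-1}\big)$.

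First I would record the classical trailing-zero identity for Schur functions: for $\gamma\in\ZZ^\ell$ with $\gamma_\ell=0$, expanding the determinant \eqref{ed s gamma} along row $\ell$, whose entries $h_{\gamma_\ell+j-\ell}=h_{j-\ell}$ vanish for $j<\ell$ and equal $1$ for $j=\ell$, gives $s_\gamma = s_{(\gamma_1,\dots,\gamma_{\ell-1})}$. The same expansion shows that $s_\beta=0$ for any $\beta\in\ZZ^\ell$ with $\beta_\ell<0$ — the fact already invoked in the proofs of Lemma~\ref{l expand downpath} and Corollary~\ref{c expand downpath}.

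Next I would apply $\prod_{(i,j)\in\Psi'}(1-tR_{ij})^{-1}$ to $s_\gamma$: these operators involve only the indices $1,\dots,\ell-1$, so the result is a $\ZZ[t]$-linear combination of Schur functions $s_{\gamma+\beta}$ with $(\gamma+\beta)_\ell=\gamma_\ell=0$. Hitting this by $\prod_{i\in S}(1-tR_{i\ell})^{-1}=\sum_{(m_i)}t^{\sum m_i}\prod_i R_{i\ell}^{m_i}$, the identity term returns each $s_{\gamma+\beta}$ unchanged, while every other term lowers the $\ell$-th coordinate to $-\sum m_i<0$ (there is no root $(\ell,j)$ in $\Delta^+_\ell$, so nothing ever raises coordinate $\ell$), hence sends $s_{\gamma+\beta}$ to $0$. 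Therefore $H(\Psi;\gamma)=\prod_{(i,j)\in\Psi'}(1-tR_{ij})^{-1}s_\gamma$. Finally, since each $R_{ij}$ with $(i,j)\in\Psi'\subset\Delta^+_{\ell-1}$ fixes coordinate $\ell$, every $s_{\gamma+\beta}$ occurring here satisfies $(\gamma+\beta)_\ell=0$, so by the trailing-zero identity it equals $s_{((\gamma+\beta)_1,\dots,(\gamma+\beta)_{\ell-1})}$; identifying each monomial $R^\beta$ with its restriction to $\ZZ^{\ell-1}$ turns $\prod_{(i,j)\in\Psi'}(1-tR_{ij})^{-1}s_\gamma$ into $\prod_{(i,j)\in\Psi'}(1-tR_{ij})^{-1}s_{(\gamma_1,\dots,\gamma_{\ell-1})}=H(\Psi\cap\Delta^+_{\ell-1};(\gamma_1,\dots,\gamma_{\ell-1}))$ by Definition~\ref{d HH gamma Psi}.

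I expect the only real care to be needed in making the factorization of the (a priori infinite) raising-operator product and the termwise-vanishing argument rigorous, i.e., in carrying out the bookkeeping inside the completed operator formalism of \cite[\S4.3]{BMPS}; once that formalism is in place, all of the algebra above is routine, and this is indeed \cite[Proposition~4.12]{BMPS}.
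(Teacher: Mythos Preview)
The paper does not supply a proof of this proposition; it is simply quoted from \cite[Proposition~4.12]{BMPS}. Your argument is correct and is essentially the expected direct proof via the raising-operator definition: factor off the operators touching index $\ell$, observe that they annihilate any $s_\beta$ with $\beta_\ell = 0$ (since each such operator strictly lowers the $\ell$-th coordinate and $s_\beta = 0$ once $\beta_\ell < 0$), and then use the cofactor expansion of the Jacobi--Trudi determinant along the last row to drop the trailing zero. Your closing caveat is apt: the only point requiring care is that the ``factorization'' and termwise vanishing be interpreted inside the completed formalism of \cite[\S4.3]{BMPS}, where these manipulations are made rigorous.
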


\begin{proof}[Proof of Theorem \ref{t schur kschur flag}]
The proof is by
induction, beginning with several reductions,
and then addressing the left side of \eqref{et schur kschur flag} using
Lemma~\ref{l expand downpath}
(Step 1).  The terms in the sum are evaluated in Steps 2 and 3.
Then in Step 4, the right side of \eqref{et schur kschur flag} is evaluated using Corollary~\ref{c expand downpath} and matched with the result from Step 3.
Steps 2--4  split into two cases.

\noindent \textbf{Step 0: Base case and reductions.}

First, we can reduce to the case $\ell > r+k$ since adding extra zeros to $\nu$ does not change \eqref{et schur kschur flag}:
precisely, if  $\ell \le r+k$ and we let  $\tilde{\nu} = (\nu, 0^{r+k+1-\ell})$
and $\tilde{\Psi} \subset \Delta^+_{r+k+1}$ be the root ideal with  $\nr(\tilde{\Psi})_i = \nr(\Psi)_i$
for  $i \le r$ and $\nr(\tilde{\Psi})_i = \min(k-(\mu\tilde{\nu})_i, r+k+1-i)$ for
 $i> r$,  then
\begin{align*}
H(\Psi;\mu\nu) = H(\tilde{\Psi}; \mu\tilde{\nu})
=\sum_{T \in \CTAB_\alpha(\mathbf{n})} \fs^{(k)}_{\lambda\tilde{\nu}} \cdot  u_{\creading(T)}
=\sum_{T \in \CTAB_\alpha(\mathbf{n})} \fs^{(k)}_{\lambda\nu} \cdot  u_{\creading(T)}.
\end{align*}
The first equality is by Proposition \ref{p trailing zeros},
noting that $\tilde{\Psi} \cap \Delta^+_\ell = \Psi$ follows from \eqref{et hyp 3};
 the second is by the result in the $\ell > r+k$ case;
the third holds by
$\fs^{(k)}_{\lambda\nu}= \fs^{(k)}_{(\lambda\nu,0)} = \fs^{(k)}_{(\lambda\nu,0,0)} = \cdots$
(by Proposition \ref{p trailing zeros} again)
and the definition \eqref{e strong Pieri operator}
of the strong Pieri operators.

From now on we assume $\ell > r+k$.
We proceed by induction on $|\Psi| +  |\alpha|$.
By Proposition~\ref{p schur kschur flag helper} (a), the base case is $\alpha = 0^r$ and $\Psi = \Delta^k(\lambda\nu)$.
The desired \eqref{et schur kschur flag} holds in this case since $H(\Psi;\mu\nu) = H(\Delta^k(\lambda\nu) ; \lambda \nu) = \fs^{(k)}_{\lambda \nu}$
and $\CTAB_\alpha(\mathbf{n})$ consists of a single tableau with no boxes.

We now establish that for any  $i \in [r-1]$,
\begin{align}
\label{e both sides zero}
\text{$\alpha_i > \alpha_{i+1}$ and $\Psi$ has a wall in rows  $i, i+1$}  \ \ \implies \ \ \text{both sides of \eqref{et schur kschur flag} are 0}.
\end{align}
Accordingly, suppose the left side of \eqref{e both sides zero} is true.
We have
\begin{align}
\label{e wall etc}
\text{$\Psi$ has a wall in rows $i, i+1$}
\ \implies \
\zeta_i = \zeta_{i+1}-1
\ \implies \ \lambda_i = \lambda_{i+1}.
\end{align}
This gives $\mu_i = \mu_{i+1}-1$ by Proposition \ref{p schur kschur flag helper} (c) as well as
\[
n_i = i - (\zeta_i- \lambda_i) = i+1 - (\zeta_{i+1}- \lambda_{i+1}) = n_{i+1}.
\]
Therefore
the right side of \eqref{et schur kschur flag} is 0 since  $\CTAB_\alpha(\mathbf{n}) = \varnothing$ whenever $\alpha_i > \alpha_{i+1}$ and  $n_i = n_{i+1}$
(by condition (IV) in Definition \ref{d CTAB}) and the left side is 0 by Lemma~\ref{l toggle lemma zero} (with  $z = i$).
Hence \eqref{e both sides zero} is established.

We next establish that for any $i \in [r-1]$,
\begin{align}
\label{e reduction 2}
 \text{$\Psi$ has a wall in rows  $i, i+1$ \, and \, } \alpha_{i} = \max\{\alpha_i, \dots, \alpha_r\} \ \implies \ \eqref{et schur kschur flag} \text{ holds}.
\end{align}
Let  $j \in [i+1, r]$ be the largest number
such that  $\Psi$ has a wall in rows  $i, i+1, \dots, j$.
By \eqref{e both sides zero} and  $\alpha_{i} = \max\{\alpha_i, \dots, \alpha_r\}$,
we can assume  $\alpha_i = \dots = \alpha_j$.
By Proposition \ref{p schur kschur flag helper} (b), and
the definition of  $\crop$,  $\lambda_{i} = \dots = \lambda_{j}$.
Then $\mu_{j-1} = \mu_{j}$  (using $\mu = \lambda - \alpha$).
By definition of  $j$ (and by Proposition \ref{p schur kschur flag helper} (d) if  $j = r$),
$\delta :=  (j, \down_\Psi(j))$ is a removable root of  $\Psi$.
The last two sentences show that we can apply Lemma~\ref{l cascading toggle lemma} (with $z=j-1$) to obtain
\begin{align*}
H(\Psi; \mu\nu) &= H(\Psi \setminus \delta; \mu\nu) = \sum_{T \in \CTAB_\alpha(\mathbf{n}^+)} \fs^{(k)}_{\lambda\nu} \cdot  u_{\creading(T)}
= \sum_{T \in \CTAB_\alpha(\mathbf{n})} \fs^{(k)}_{\lambda\nu} \cdot  u_{\creading(T)}\, .
\end{align*}
The second equality is by the inductive hypothesis with data $\zeta - \epsilon_{j}$ in place of  $\zeta$, $\mathbf{n}^+ = \mathbf{n}+\epsilon_j$
in place of  $\mathbf{n}$,
and  $\mu, \lambda$, $\alpha$ unchanged (the hypotheses \eqref{et hyp 1}--\eqref{et hyp 4} and \eqref{et hyp 5} are clear while \eqref{et hyp 3} follows from
 $\style(\Psi, \mu\nu)_j = \style(\Psi, \mu\nu)_{j-1}-1 < k$).
The third equality is by $\CTAB_\alpha(\mathbf{n}^+) = \CTAB_\alpha(\mathbf{n})$,
which follows from $\alpha_{j-1} = \alpha_j$ and
\[n_{j-1} = j-1-(\zeta_{j-1}-\lambda_{j-1})= j-1-(\zeta_{j}-1-\lambda_{j}) =  n_j.\]

Next suppose $\alpha = 0^r$.  Since we have already handled the base case, we may assume
$\Delta^k(\zeta\nu) = \Psi \supsetneq \Delta^k(\lambda \nu) = \Delta^k(\mu \nu)$ (see Proposition \ref{p schur kschur flag helper} (a)).
Then $\lambda \ne \zeta$, so
there exists an $i \in [r-1]$ such that $\zeta_i < \zeta_{i+1}$.  Hence
$\Psi$ has a wall in rows $i, i+1$, so we are done by \eqref{e reduction 2}.

\noindent \textbf{Step 1: expand the left side of \eqref{et schur kschur flag}.}

We can now assume  $|\alpha| > 0$.  Set
\[p = \max\{j \in [r] \mid  \alpha_j = \max(\alpha) \}.\]
If  $p = r$, then $\delta := (p, \down_\Psi(p))$ is a removable root of  $\Psi$ by Proposition \ref{p schur kschur flag helper} (d).
If  $p < r$ and $\Psi$ has a wall in rows  $p, p+1$, then we are done by  \eqref{e both sides zero}.
So from now on we may assume $\delta$ is a removable root of  $\Psi$.
In addition, by \eqref{e reduction 2}, we may assume that if $p+1 <r$ and $\alpha_{p+1} = \max\{\alpha_{p+1}, \dots, \alpha_r\}$, then
$\nr(\Psi)_{p+1} \le \nr(\Psi)_{p+2}$.

Lemma~\ref{l expand downpath} yields
\begin{align}
\label{e schur kschur two terms}
H(\Psi; \mu\nu) &= \sum_{\{ z \in \downpath_\Psi(p) \, \mid \,  z = p \text{ or } (\mu\nu)_z > (\mu\nu)_{z+1} \}} \!\!\!\! t^{B_\Psi(p,z)}\HH(\Psi^z ;\mu \nu + \epsilon_{p}-\epsilon_z),
\end{align}
where $\Psi^z := \Psi \setminus \{(z,\down_\Psi(z))\}$ for $z \ne \chaindown_\Psi(p)$ and $\Psi^{\chaindown_\Psi(p)} := \Psi$.
We must verify the hypotheses of the lemma:
\eqref{el expand 1} is clear from \eqref{et hyp 1} and \eqref{et hyp 2}, \eqref{el expand 1b} is immediate from \eqref{et hyp 4},
\eqref{el expand 2} is immediate from
 \eqref{et hyp 3}, and
\eqref{el expand 5} follows from
\begin{align}
\label{e style less k}
\style(\Psi, \mu\nu)_p = \nr(\Psi)_p + \mu_p \le k- \lambda_p + \mu_p < k,
\end{align}
where the first inequality is by  $\lambda_p \le \zeta_p$ and the second
is by $\lambda_p - \mu_p = \alpha_p > 0$.
Finally, we verify \eqref{el expand 4}: suppose $p < r$ and
$\nr(\Psi)_p = \nr(\Psi)_{p+1}$.
By Proposition \ref{p schur kschur flag helper} (c), $\mu_{p} = \mu_{p+1}-1$ and $\alpha_p = \alpha_{p+1}+1$.
The latter implies $\alpha_{p+1} = \max\{\alpha_{p+1}, \dots, \alpha_r\}$.
The conclusion
$\nr(\Psi)_{p+1} \le \nr(\Psi)_{p+2}$
thus follows
from the previous paragraph if  $p+1 < r$ and from Proposition \ref{p schur kschur flag helper} (d) if  $p + 1 = r$.


\noindent \textbf{Step 2: apply the inductive hypothesis to the terms arising in Step 1.}

We proceed by applying the inductive hypothesis to any term from the sum in \eqref{e schur kschur two terms} with  $z > p$
(with $\mu^+ = \mu + \epsilon_p$  in place of  $\mu$, 
 $\alpha^- = \alpha - \epsilon_p$ in place of  $\alpha$, $\Psi^z$ in place of  $\Psi$,
 and  $\lambda, \zeta, \mathbf{n}$ unchanged);
the hypotheses are readily verified:
\eqref{et hyp 2} and \eqref{et hyp 4} are clear, \eqref{et hyp 5} holds since $\alpha_p >  0$,
and \eqref{et hyp 3} holds by \eqref{e style less k}  and the fact that
$\Psi^z$ and  $\Delta^k(\mu\nu+\epsilon_p-\epsilon_z)$ agree in rows  $ > r$.
For \eqref{et hyp 1},  we have
\begin{align*}
 \lambda_{p-1} \ge \lambda_p \, \text{ and } \, \alpha_{p-1} \le \alpha_p \, \text{ and } \, \mu = \lambda-\alpha  \, \, \implies  \, \, \mu_{p-1} \ge \mu_p \, \,  \implies \, \,  \mu^+ \text{ is a pseudopartition}.
\end{align*}
Hence by the inductive hypothesis,
\begin{align}
\label{e case 2 LHS mu plus}
\HH(\Psi^z ;\mu \nu + \epsilon_{p}-\epsilon_z) =
\fs^{(k)}_{\lambda \nu - \epsilon_z} \cdot \sum_{T \in \CTAB_{\alpha^-}(\mathbf{n})} \!\! u_{\creading(T)}.
\end{align}

We next consider the  $z=p$ term $\HH(\Psi \setminus \delta ;\mu \nu)$ of \eqref{e schur kschur two terms}.
Set
\[\zeta^- = \zeta- \epsilon_p \ \ \text{ and } \ \ \lambda^- = \crop(\zeta^-),\]
 which is part of the new data for this term
(the new  $\alpha$ and $\mathbf{n}$ will be addressed later).
The remainder of the proof separates into the cases
$\lambda \ne \lambda^-$ (case 1) and $\lambda = \lambda^-$ (case 2);
case 1 breaks into the two subcases below.

\textbf{Case 1A: $\lambda^-$ and  $\lambda$ differ in more than one position.}

It follows from the definition of  $\crop$ that $\lambda_p = \lambda_{p+1}$,
$\lambda^-_p = \lambda^-_{p+1} = \lambda_p-1$,
and  $\zeta^-_p = \lambda^-_p$.
This gives $\zeta^-_{p+1} = \zeta_{p+1} \ge  \lambda_{p+1} > \lambda^-_p = \zeta^-_p$, and thus
 $\Psi \setminus \delta$ has a wall in rows $p, p+1$ by  Proposition \ref{p schur kschur flag helper} (b).
Moreover, $\lambda_p = \lambda_{p+1}$,  $\alpha_p > \alpha_{p+1}$, and  $\mu$ a pseudopartition  give
$\mu_p = \mu_{p+1}-1$.
Hence  by Lemma~\ref{l toggle lemma zero} (with $z=p$),
\begin{align}
\label{e case 1 LHS nonroot}
\HH(\Psi \setminus \delta ;\mu \nu) = 0.
\end{align}


\textbf{Case 1B: $\lambda^- = \lambda-\epsilon_p$.}

We apply the inductive hypothesis to the term $\HH(\Psi \setminus \delta ;\mu \nu)$ with data  $\zeta^-$,  $\lambda^-$ as mentioned above,
$\alpha^- = \lambda^- - \mu = \alpha-\epsilon_p$, and $\mathbf{n}$ unchanged since
\[ \zeta^- - \crop(\zeta^-) = \zeta -\epsilon_p - \lambda^-  = \zeta-\epsilon_p - (\lambda-\epsilon_p) =  \zeta - \lambda. \]
The hypotheses are satisfied: \eqref{et hyp 1}, \eqref{et hyp 2}, and \eqref{et hyp 4} are clear, \eqref{et hyp 3} follows from \eqref{e style less k}, and \eqref{et hyp 5}
holds since $\alpha_p >  0$.
The inductive hypothesis thus gives
\begin{align}
\label{e case 2 LHS nonroot}
\HH(\Psi \setminus \delta; \mu\nu )  = \fs^{(k)}_{\lambda^- \nu} \cdot \sum_{T \in \CTAB_{\alpha^-}(\mathbf{n})} \!\! u_{\creading(T)}.
\end{align}

\textbf{Case 2: $\lambda^- = \lambda$.}

We apply the inductive hypothesis to the term $\HH(\Psi \setminus \delta ;\mu \nu)$
with data $\zeta^-$,  $\lambda^-$ as mentioned above, $\alpha$ unchanged, and $\mathbf{n}^+ := \mathbf{n}+\epsilon_p$, which follows from
\[ \zeta^- - \lambda^- = \zeta-\epsilon_p - \lambda^-  = (\zeta - \lambda) -\epsilon_p. \]
The hypotheses are satisfied: \eqref{et hyp 1}, \eqref{et hyp 2}, \eqref{et hyp 4}, and \eqref{et hyp 5} are clear, and \eqref{et hyp 3} follows from \eqref{e style less k}.
Hence by the inductive hypothesis,
\begin{align}
\label{e case 3 LHS nonroot}
\HH(\Psi \setminus \delta; \mu\nu)  = \fs^{(k)}_{\lambda \nu} \cdot \sum_{T \in \CTAB_\alpha(\mathbf{n}^+)}  u_{\creading(T)} \, .
\end{align}

\noindent \textbf{Step 3 (case 1): assemble the terms from step 2.}

In this case ($\lambda \ne \lambda^-$),
combining \eqref{e schur kschur two terms}, \eqref{e case 2 LHS mu plus}, \eqref{e case 1 LHS nonroot}, and \eqref{e case 2 LHS nonroot} yields
\begin{align}
\label{e case 2 LHS final}
H(\Psi; \mu\nu) &=
\sum_{\{z \in \downpath_\Psi(p) \, \mid \, \lambda\nu - \epsilon_z \in \Par^k_\ell \}}
\!\!\!\! t^{B_\Psi(p,z)} \Bigg( \fs^{(k)}_{\lambda \nu - \epsilon_z}   \cdot
\sum_{T \in \CTAB_{\alpha^-}(\mathbf{n})}  \!\! u_{\creading(T)} \Bigg).
\end{align}
Note that this holds in both subcases of case 1:
in case 1A,  $\lambda_p =\lambda_{p+1}$ and so there is no $z=p$ term in this sum as required by \eqref{e case 1 LHS nonroot};
in case 1B, $\lambda^- \nu = \lambda \nu - \epsilon_p$ is a partition, and
therefore this sum has a term for $z = p$ and it agrees with the right side of \eqref{e case 2 LHS nonroot}.

\noindent \textbf{Step 4 (case 1): evaluate the right side of \eqref{et schur kschur flag} to match it with \eqref{e case 2 LHS final}.}

It follows from the definition of  $\crop$ and $\lambda^- \ne \lambda$ that
$\lambda_p := \min_{j \in [p]}(\zeta_j) = \zeta_p$; in addition, if  $p < r$, then  $\zeta_p \ge \zeta_{p+1}$ (since  $\delta$ is removable) and therefore
$\lambda_{p+1} := \min_{j \in [p+1]}(\zeta_j) = \zeta_{p+1}$.
Hence ${n}_p = p$ and, if  $p < r$, ${n}_{p+1} = p+1$.
Then by Proposition~\ref{p flag decomposition tableau}, $\CTAB_\alpha(\mathbf{n}) =
\big\{{\tiny \mytableau{n_p}}_{\, p} \sqcup T : T \in \CTAB_{\alpha^-}(\mathbf{n})\big\}$
(condition (IV) of Definition \ref{d CTAB} implies $\CTAB_\alpha(\mathbf{n}+\epsilon_p) = \varnothing$).
The consequence for column reading words is thus
\begin{align}
\label{e colwords split}
\sum_{T \in \CTAB_\alpha(\mathbf{n})} u_{\creading(T)} &= u_{n_p}\sum_{T \in \CTAB_{\alpha^-}(\mathbf{n})} u_{\creading(T)}.
\end{align}

We need to compute the action of the operator in \eqref{e colwords split}
on $\fs^{(k)}_{\lambda\nu}$.
Set  $\Phi = \Delta^k(\lambda\nu)$. By Corollary \ref{c expand downpath} and the description \eqref{ec flm k schur} of the strong Pieri operators,
\begin{align}
\label{e downpath case 2}
\fs^{(k)}_{\lambda\nu} \cdot u_{n_p} = H(\Phi ; \lambda\nu-\epsilon_{n_p} )
= \sum_{\{z \in \downpath_\Phi(n_p) \, \mid \, \lambda \nu - \epsilon_z \in \Par^k_\ell\}}
\!\!\!\! t^{B_\Phi(n_p,z)}\fs^{(k)}_{\lambda \nu - \epsilon_z}\, .
\end{align}
The hypotheses of the corollary hold by \eqref{et hyp 4} and the fact $n_p = p = r$ implies $\lambda_r  > \mu_r \ge \nu_1$.

Finally, applying the operator $\sum_{T \in \CTAB_{\alpha^-}(\mathbf{n})} u_{\creading(T)}$ to \eqref{e downpath case 2} yields
\begin{align}
\fs^{(k)}_{\lambda \nu} \cdot u_{n_p} \!\! \sum_{T \in \CTAB_{\alpha^-}(\mathbf{n})} \!\!\! u_{\creading(T)}
= \sum_{\{z \in \downpath_\Phi(n_p) \, \mid \, \lambda \nu - \epsilon_z \in \Par^k_\ell\}}
\!\!\!\!\!\! t^{B_\Phi(n_p,z)}\fs^{(k)}_{\lambda \nu - \epsilon_z} \cdot
\sum_{T \in \CTAB_{\alpha^-}(\mathbf{n})} \!\!\! u_{\creading(T)}\,. \notag
\end{align}
The left side agrees with the right side of \eqref{et schur kschur flag} by \eqref{e colwords split}, and
the right side agrees with the right side of \eqref{e case 2 LHS final} since $n_p = p$ and  $\zeta_p = \lambda_p$ implies  $\downpath_\Phi(p) = \downpath_\Psi(p)$.
This completes the proof in case 1.

\noindent \textbf{Step 3 (case 2): assemble terms from step 2.}

In this case ($\lambda^- = \lambda$), combining \eqref{e schur kschur two terms}, \eqref{e case 2 LHS mu plus}, and \eqref{e case 3 LHS nonroot} yields
\begin{align}
H(\Psi; \mu\nu) &=
\fs^{(k)}_{\lambda \nu} \cdot \!\! \sum_{T \in \CTAB_\alpha(\mathbf{n}^+)} \!\! u_{\creading(T)} \notag\\
&+ \sum_{\{z \in \downpath_\Psi(p)\setminus\{p\}  \, \mid\, (\mu\nu)_z > (\mu\nu)_{z+1} \}}
\!\!\!\! t^{B_\Psi(p,z)}  \Bigg( \fs^{(k)}_{\lambda \nu - \epsilon_z}  \cdot
\sum_{T \in \CTAB_{\alpha^-}(\mathbf{n})}  \!\! u_{\creading(T)}\Bigg) \,. \label{e case 3 LHS final}
\end{align}

\noindent \textbf{Step 4 (case 2): evaluate the right side of \eqref{et schur kschur flag} to match it with \eqref{e case 3 LHS final}.}

Since $\mathbf{n}^+$ is weakly increasing (Proposition \ref{p schur kschur flag helper} (e)),
we can use Proposition~\ref{p flag decomposition tableau} to obtain
\begin{align}
\label{e case 3 colwords split}
\sum_{T \in \CTAB_\alpha(\mathbf{n})} u_{\creading(T)} =
 \sum_{T \in \CTAB_{\alpha}(\mathbf{n}^+)} u_{\creading(T)} +
u_{n_p}\sum_{T \in \CTAB_{\alpha^-}(\mathbf{n})} u_{\creading(T)}\, .
\end{align}

We now compute the action of the operator in \eqref{e case 3 colwords split} on $\fs^{(k)}_{\lambda\nu}$.
Set  $\Phi = \Delta^k(\lambda\nu)$.
By Corollary \ref{c expand downpath} (the hypotheses hold by \eqref{et hyp 4} and $n_p < n^+_p \le r$),
\begin{align}
\label{e downpath case 3}
\fs^{(k)}_{\lambda\nu} \cdot u_{n_p} &= H(\Phi ; \lambda\nu-\epsilon_{n_p})
= \sum_{\{z \in \downpath_\Phi(n_p) \, \mid \, \lambda \nu - \epsilon_z \in \Par^k_\ell \}}
t^{B_\Phi(n_p,z)}\fs^{(k)}_{\lambda \nu - \epsilon_z}.
\end{align}

Using \eqref{e case 3 colwords split} followed by \eqref{e downpath case 3}, we obtain
\begin{align*}
&\fs^{(k)}_{\lambda \nu} \cdot \sum_{T \in \CTAB_{\alpha}(\mathbf{n})} u_{\creading(T)}
\, = \, \fs^{(k)}_{\lambda \nu} \cdot \bigg(\sum_{T \in \CTAB_{\alpha}(\mathbf{n}^+)} u_{\creading(T)} + u_{n_p} \sum_{T \in \CTAB_{\alpha^-}(\mathbf{n})} u_{\creading(T)} \bigg)\\
=\ & \fs^{(k)}_{\lambda \nu} \cdot \!\!\! \sum_{T \in \CTAB_{\alpha}(\mathbf{n}^+)} \!\! u_{\creading(T)} + \!
\sum_{\{z \in \downpath_\Phi(n_p) \, \mid \, \lambda \nu - \epsilon_z \in \Par^k_\ell \}} \!\!\!\!\!
t^{B_\Phi(n_p,z)}\fs^{(k)}_{\lambda \nu - \epsilon_z} \cdot \sum_{T \in \CTAB_{\alpha^-}(\mathbf{n})} u_{\creading(T)} \, .
\end{align*}
This agrees with the right side of \eqref{e case 3 LHS final} by Proposition \ref{p schur kschur flag helper} (g)
and because there is no $z=n_p$ term in the second sum; the latter follows from $\lambda_{n_p} = \lambda_{n_{p}+1}$, which comes from
Proposition \ref{p schur kschur flag helper} (f) and $n_p < n^+_p \le p$.
This completes the proof in case~2.
\end{proof}

\begin{example}
\label{ex proof schur kschur}
We illustrate case 1B and case 2 from the proof of Theorem \ref{t schur kschur flag}.
To keep the examples from getting too large, we have not appended zeros to  $\nu$ to force $\ell > k+r$ as
is done in Step 0.  However, we can match the examples below exactly with the proof
by appending  $k+r+1 - \ell$ zeros to the weights of all Catalan functions and modifying all root ideals as in the first paragraph of
Step 0.

\textbf{Example for case 1B.}
Let $k=8$, $r=4$, $\mu = 3212$, $\nu = 22221$, and $\Psi$ be the root ideal on the left in \eqref{eex case 1B}.
Hence $\zeta = 4543$, $\lambda = 4443$, $\alpha = 1231$, $\mathbf{n} = 1134$, and
\[
\CTAB_\alpha(\mathbf{n}) =
\fontsize{6pt}{5pt}\selectfont
\Bigg\{ \
\tableau{
\bl &\bl   & 1\\
\bl & 2 & 2\\
3& 3 & 3\\
\bl &\bl   & 4
}
\quad \ \ \tableau{
\bl &\bl   & 1\\
\bl & 1 & 2\\
3& 3 & 3\\
\bl &\bl   & 4
}
\ \Bigg\}.
\]
According to Theorem \ref{t schur kschur flag},
\begin{align}
\label{eex case1B terms}
H(\Psi;\mu\nu) = \fs^{(k)}_{\lambda \nu} \cdot \sum_{T \in \CTAB_{\alpha}(\mathbf{n})} u_{\creading(T)} = \fs^{(8)}_{444322221} \cdot \big(u_{3324321} + u_{3314321} \big).
\end{align}

Tracing through the proof of Theorem \ref{t schur kschur flag} for this example,
we have $p = 3$, $\lambda^- = 4433$, and so case 1B applies.
In the present example, \eqref{e schur kschur two terms} becomes
\ytableausetup{mathmode, boxsize=1.03em,centertableaux}
\begin{align}
\label{eex case 1B}
H(\Psi;\mu\nu) =
{\tiny
\begin{ytableau}
3& & & &      &*(red)&*(red)&*(red)&*(red)\\
 &2& & &      &*(red)&*(red)&*(red)  &*(red)\\
 & &1& &      &      &      & *(red) & *(red)\\
 & & &2&      &      &      &       &   \\
 & & & &2     &      &      &       &   \\
 & & & &      &2     &      &       &   \\
 & & & &      &     & 2     &       &   \\
 & & & &      &     &       & 2     &   \\
 & & & &      &     &       &       & 1  \\
\end{ytableau} =
\begin{ytableau}
3& & & &      &*(red)&*(red)&*(red)&*(red)\\
 &2& & &      &*(red)&*(red)&*(red)  &*(red)\\
 & &1& &      &      &      &       & *(red)\\
 & & &2&      &      &      &       &   \\
 & & & &2     &      &      &       &   \\
 & & & &      &2     &      &       &   \\
 & & & &      &     & 2     &       &   \\
 & & & &      &     &       & 2     &   \\
 & & & &      &     &       &       & 1  \\
\end{ytableau}
+ t\,
\begin{ytableau}
3& & & &      &*(red)&*(red)&*(red)&*(red)\\
 &2& & &      &*(red)&*(red)&*(red)  &*(red)\\
 & &2& &      &      &      & *(red) & *(red)\\
 & & &2&      &      &      &       &   \\
 & & & &2     &      &      &       &   \\
 & & & &      &2     &      &       &   \\
 & & & &      &     & 2     &       &   \\
 & & & &      &     &       & 1     &   \\
 & & & &      &     &       &       & 1  \\
\end{ytableau}\, .
}
\end{align}
For the first term on the right, the inductive hypothesis gives (as in \eqref{e case 2 LHS nonroot})
\begin{align}
\label{eex case1B term1}
{\tiny \begin{ytableau}
3& & & &      &*(red)&*(red)&*(red)&*(red)\\
 &2& & &      &*(red)&*(red)&*(red)  &*(red)\\
 & &1& &      &      &      &       & *(red)\\
 & & &2&      &      &      &       &   \\
 & & & &2     &      &      &       &   \\
 & & & &      &2     &      &       &   \\
 & & & &      &     & 2     &       &   \\
 & & & &      &     &       & 2     &   \\
 & & & &      &     &       &       & 1  \\
\end{ytableau}}
= \fs^{(k)}_{\lambda^-\nu} \cdot \sum_{T \in \CTAB_{\alpha^-}(\mathbf{n})} u_{\creading(T)} = \fs^{(8)}_{443322221} \cdot \big( u_{324321} + u_{314321} \big),
\end{align}
where $\alpha^- = 1221$ and
$\CTAB_{\alpha^-}(\mathbf{n}) = \CTAB_{1221}(1134) =
\fontsize{6pt}{5pt}\selectfont
\Bigg\{ \
\tableau{
\bl   & 1\\
2 & 2\\
3 & 3\\
\bl   & 4
}
\quad \ \ \tableau{
\bl   & 1\\
1 & 2\\
3 & 3\\
\bl   & 4
}
\ \Bigg\}.$

For the second term on the right of \eqref{eex case 1B}, the inductive hypothesis yields (as in \eqref{e case 2 LHS mu plus})
\begin{align}
\label{eex case1B term2}
{\tiny \begin{ytableau}
3& & & &      &*(red)&*(red)&*(red)&*(red)\\
 &2& & &      &*(red)&*(red)&*(red)  &*(red)\\
 & &2& &      &      &      & *(red) & *(red)\\
 & & &2&      &      &      &       &   \\
 & & & &2     &      &      &       &   \\
 & & & &      &2     &      &       &   \\
 & & & &      &     & 2     &       &   \\
 & & & &      &     &       & 1     &   \\
 & & & &      &     &       &       & 1  \\
\end{ytableau}}
= \fs^{(k)}_{\lambda \nu - \epsilon_8} \cdot \sum_{T \in \CTAB_{\alpha^-}(\mathbf{n})} u_{\creading(T)} = \fs^{(8)}_{444322211} \cdot \big( u_{324321} + u_{314321} \big).
\end{align}
The right side of \eqref{eex case1B term1} plus $t$ times the right side of \eqref{eex case1B term2} agrees with the right side of
\eqref{eex case1B terms} as is shown in Step 4 (case 1) of the proof of Theorem \ref{t schur kschur flag}.
\vspace{2mm}

\textbf{Example for case 2.}
Let $k = 8$, $r=4$, $\mu = 5442$, $\nu = 211$, and $\Psi$ be the root ideal on the left in \eqref{eex case 2}.
Hence $\zeta = 5666$, $\lambda = 5555$, $\alpha = 0113$,  $\mathbf{n} = 1123$, and
\[
\CTAB_\alpha(\mathbf{n}) =
\fontsize{6pt}{5pt}\selectfont
\Bigg\{\
\tableau{
\bl&\bl& \fr[r] \\
\bl&\bl& 1 \\
\bl&\bl& 2 \\
3&3& 3
}
\quad \tableau{
\bl&\bl& \fr[r] \\
\bl&\bl& 1 \\
\bl&\bl& 2 \\
3&3& 4
}
\quad \tableau{
\bl&\bl& \fr[r] \\
\bl&\bl& 1 \\
\bl&\bl& 3 \\
3&3& 4
}
\quad \tableau{
\bl&\bl& \fr[r] \\
\bl&\bl& 2 \\
\bl&\bl& 3 \\
3&3& 4
}
\quad \tableau{
\bl&\bl& \fr[r] \\
\bl&\bl& 1 \\
\bl&\bl& 2 \\
3&4& 4
}
\quad \tableau{
\bl&\bl& \fr[r] \\
\bl&\bl& 1 \\
\bl&\bl& 3 \\
3&4& 4
}
\quad \tableau{
\bl&\bl& \fr[r] \\
\bl&\bl& 2 \\
\bl&\bl& 3 \\
3&4& 4
}
\quad \tableau{
\bl&\bl& \fr[r] \\
\bl&\bl& 1 \\
\bl&\bl& 2 \\
4&4& 4
}
\quad \tableau{
\bl&\bl& \fr[r] \\
\bl&\bl& 1 \\
\bl&\bl& 3 \\
4&4& 4
}
\quad \tableau{
\bl&\bl& \fr[r] \\
\bl&\bl& 2 \\
\bl&\bl& 3 \\
4&4& 4
}\
\Bigg\}.
\]
According to Theorem \ref{t schur kschur flag},
\begin{align}
\label{eex case2 terms}
H(\Psi;\mu\nu) = 
\fs^{(8)}_{5555211} \cdot \bigg( {\small \parbox{7cm}{$u_{33321} + u_{33421} + u_{33431} + u_{33432} +u_{34421}$   $+u_{34431} + u_{34432} + u_{44421} + u_{44431} + u_{44432}$}} \bigg).
\end{align}

Tracing through the proof of Theorem \ref{t schur kschur flag} for this example,
we have $p = 4$, $\lambda^- = \lambda = 5555$ and so case 2 applies.
In the present example, \eqref{e schur kschur two terms} becomes
\ytableausetup{mathmode, boxsize=1.03em,centertableaux}
\begin{align}
\label{eex case 2}
{\tiny
H(\Psi;\mu\nu) =
\begin{ytableau}
5& & & &*(red)&*(red)&*(red)\\
 &4& & &*(red)&*(red)&*(red)\\
 & &4& &      &*(red)&*(red)\\
 & & &2&      &      &*(red)\\
 & & & &2     &       &\\
 & & & &      &1      & \\
 & & & &      &     & 1
\end{ytableau} =
\begin{ytableau}
5& & & &*(red)&*(red)&*(red)\\
 &4& & &*(red)&*(red)&*(red)\\
 & &4& &      &*(red)&*(red)\\
 & & &2&      &      &  \\
 & & & &2     &       &\\
 & & & &      &1      & \\
 & & & &      &     & 1
\end{ytableau}
+
t\,
\begin{ytableau}
5& & & &*(red)&*(red)&*(red)\\
 &4& & &*(red)&*(red)&*(red)\\
 & &4& &      &*(red)&*(red)\\
 & & &3&      &      &*(red)\\
 & & & &2     &       &\\
 & & & &      &1      & \\
 & & & &      &     & 0
\end{ytableau} \,.
}
\end{align}
For the first term on the right, the inductive hypothesis gives (as in \eqref{e case 3 LHS nonroot})
\begin{align}
\label{eex case2 term1}
{\tiny
\begin{ytableau}
5& & & &*(red)&*(red)&*(red)\\
 &4& & &*(red)&*(red)&*(red)\\
 & &4& &      &*(red)&*(red)\\
 & & &2&      &      &  \\
 & & & &2     &       &\\
 & & & &      &1      & \\
 & & & &      &     & 1
\end{ytableau}
}
= \fs^{(k)}_{\lambda \nu} \cdot \sum_{T \in \CTAB_{\alpha}(\mathbf{n}^+)} u_{\creading(T)}
= \fs^{(8)}_{5555211} \cdot \big( u_{44421} + u_{44431} + u_{44432}\big),
\end{align}
where  $\mathbf{n}^+ = \mathbf{n}+\epsilon_p = 1124$ and
$\CTAB_\alpha(\mathbf{n}^+) = \CTAB_{0113}(1124) =
\fontsize{6pt}{5pt}\selectfont
\Bigg\{ \
\tableau{
\bl&\bl& \fr[r] \\
\bl&\bl& 1 \\
\bl&\bl& 2 \\
4&4& 4
}
\quad \ \ \tableau{
\bl&\bl& \fr[r] \\
\bl&\bl& 1 \\
\bl&\bl& 3 \\
4&4& 4
}
\quad \ \ \tableau{
\bl&\bl& \fr[r] \\
\bl&\bl& 2 \\
\bl&\bl& 3 \\
4&4& 4
}
\ \Bigg\}.$

For the second term on the right of \eqref{eex case 2}, the inductive hypothesis yields (as in \eqref{e case 2 LHS mu plus})
\begin{multline}
\label{eex case2 term2}
{\tiny
\begin{ytableau}
5& & & &*(red)&*(red)&*(red)\\
 &4& & &*(red)&*(red)&*(red)\\
 & &4& &      &*(red)&*(red)\\
 & & &3&      &      &*(red)\\
 & & & &2     &       &\\
 & & & &      &1      & \\
 & & & &      &     & 0
\end{ytableau}
}
= \fs^{(k)}_{\lambda \nu-\epsilon_7} \cdot \sum_{T \in \CTAB_{\alpha^-}(\mathbf{n})} u_{\creading(T)} \\
= \fs^{(8)}_{5555210} \cdot \big(u_{3321} + u_{3421} + u_{3431} + u_{3432} + u_{4421} + u_{4431} + u_{4432} \big),
\end{multline}
where  $\alpha^- = \alpha-\epsilon_p = 0112$ and
\[
\CTAB_{\alpha^-}(\mathbf{n}) = \CTAB_{0112}(1123) =
\fontsize{6pt}{5pt}\selectfont
\Bigg\{ \
\tableau{
\bl& \fr[r] \\
\bl& 1 \\
\bl& 2 \\
3& 3
}
\quad \ \ \tableau{
\bl& \fr[r] \\
\bl& 1 \\
\bl& 2 \\
3& 4
}
\quad \ \ \tableau{
\bl& \fr[r] \\
\bl& 1 \\
\bl& 3 \\
3& 4
}
\quad \ \ \tableau{
\bl& \fr[r] \\
\bl& 2 \\
\bl& 3 \\
3& 4
}
\quad \ \ \tableau{
\bl& \fr[r] \\
\bl& 1 \\
\bl& 2 \\
4& 4
}
\quad \ \ \tableau{
\bl& \fr[r] \\
\bl& 1 \\
\bl& 3 \\
4& 4
}
\quad \ \ \tableau{
\bl& \fr[r] \\
\bl& 2 \\
\bl& 3 \\
4& 4
}
\ \Bigg\}.
\]
The right side of \eqref{eex case2 term1} plus  $t$ times the right side of \eqref{eex case2 term2} agrees with
the right side of
\eqref{eex case2 terms} as is shown in Step 4 (case 2) of the proof of Theorem \ref{t schur kschur flag}.
\end{example}

%

\section{Proof of Theorems \ref{t k split to k schur} and \ref{t k split eq catalan}}
\label{s ksplit proofs}

We give the proofs of Theorems \ref{t k split to k schur} and \ref{t k split eq catalan} after three lemmas.
The proof of Theorem~\ref{t k split to k schur} is by an induction on the number of partitions in the  $k$-split using Theorem~\ref{t schur kschur}, and
Theorem~\ref{t k split eq catalan}  follows easily from
Theorem \ref{t k split to k schur} and its proof.

Our first lemma is related to the  $k$-rectangle property (Corollary \ref{c k rectangle}).  Note that
we do not yet know that the   $k$-Schur Catalan functions have the  $k$-rectangle property since Corollary~\ref{c k rectangle} relies on what we are now proving.

\begin{lemma}
\label{l multiply by rectangle easy}
Let $U = (k-r+1)^r$ be a $k$-rectangle and $(\Psi, \gamma)$ an indexed root ideal of length  $\ell-r$ such that
 $\nr(\Psi)_1 \ge \min(r-1, \ell-r-1)$.
Let $\tilde{\Psi} \subset \Delta^+_\ell$ be the root ideal defined by $\nr(\tilde{\Psi})_i = \min(r-1,\ell-i)$ for  $i \le r$ and
$\nr(\tilde{\Psi})_i = \nr(\Psi)_{i-r}$ for  $i > r$.
Then
\begin{align}
\bb_U \mysquareb H(\Psi; \gamma) = H(\tilde{\Psi}; U\gamma).
\end{align}
In particular, if  $\nu \in \Par^{k-r+1}_{\ell-r}$, then $\bb_U \mysquareb \fs^{(k)}_\nu = \fs^{(k)}_{U\nu}$.
\end{lemma}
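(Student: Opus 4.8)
The plan is to reduce the left-hand side to a single Catalan function via the Catalan-operator calculus of Section 3, and then to deform the resulting root ideal into $\tilde{\Psi}$ using the cascading toggle lemma. First I would write $\bb_U = \HHH^{\varnothing_r}_U$ (the identity $\bb_\alpha = \HHH^\varnothing_\alpha$ applied with $\alpha = U \in \ZZ^r$), use $\HHH^\Psi_\gamma \cdot 1 = H(\Psi;\gamma)$ from \eqref{e Catalan op eq fun}, and apply the composition law of Proposition~\ref{p Catalan box times}:
\begin{align*}
\bb_U \mysquareb H(\Psi;\gamma) \;=\; \HHH^{\varnothing_r}_U\big(\HHH^{\Psi}_\gamma \cdot 1\big) \;=\; \HHH^{\varnothing_r \uplus \Psi}_{U\gamma}\cdot 1 \;=\; H\big(\varnothing_r \uplus \Psi;\, U\gamma\big).
\end{align*}
It then remains to prove $H(\varnothing_r \uplus \Psi;\, U\gamma) = H(\tilde{\Psi};\, U\gamma)$. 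Reading off \eqref{ed root ideal uplus}, the root ideals $\varnothing_r \uplus \Psi$ and $\tilde{\Psi}$ agree in rows $>r$, both have empty columns $1,\dots,r$, and in a row $i \le r$ the ideal $\varnothing_r\uplus\Psi$ contains all of columns $r+1,\dots,\ell$ while $\tilde{\Psi}$ (from $\nr(\tilde{\Psi})_i = \min(r-1,\ell-i)$) contains exactly columns $i+r,\dots,\ell$. Hence the ``excess'' set is $(\varnothing_r\uplus\Psi)\setminus\tilde{\Psi} = \{(i,j): 1\le i\le r,\ r<j<i+r,\ j\le\ell\}$.

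Next I would delete the excess roots one at a time, keeping the weight $U\gamma$ fixed, applying Lemma~\ref{l cascading toggle lemma} at each step; order the deletions by column from left to right, and within a column from the bottom row upward. When deleting $\delta=(i,r+t)$ — so $z=i-1$ in the lemma — the three ``static'' hypotheses are immediate: columns $z,z+1\le r$ are empty, giving the ceiling in columns $z,z+1$; the first $r$ entries of $U\gamma$ all equal $k-r+1$, giving $(U\gamma)_z=(U\gamma)_{z+1}$; and a bookkeeping count shows rows $i-1$ and $i$ of the current root ideal each have $\ell-r-t+1$ roots, giving the wall in rows $z,z+1$. For removability of $\delta$: the box below it, $(i+1,r+t)$, was deleted earlier in the same column when $i<r$, and equals $(r+1,r+t)$ when $i=r$ — which lies in $\varnothing_r\uplus\Psi$ iff $(1,t)\in\Psi$, and the hypothesis $\nr(\Psi)_1 \ge \min(r-1,\ell-r-1)$ forces $(1,t)\notin\Psi$ throughout the needed range $1\le t\le\min(r-1,\ell-r)$; the box to the left, $(i,r+t-1)$, was deleted while processing column $r+t-1$ (or lies in the empty column $r$ when $t=1$). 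Since each application of Lemma~\ref{l cascading toggle lemma} leaves the Catalan function unchanged, after exhausting the excess set we obtain $H(\varnothing_r\uplus\Psi;\,U\gamma)=H(\tilde{\Psi};\,U\gamma)$, which is the displayed identity.

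Finally, for the ``in particular'' statement I would take $\Psi=\Delta^k(\nu)$ with $\nu\in\Par^{k-r+1}_{\ell-r}$: then $\nr(\Delta^k(\nu))_1=\min(k-\nu_1,\ell-r-1)\ge\min(r-1,\ell-r-1)$ because $\nu_1\le k-r+1$, so the hypothesis holds, and comparing $\nr$ row by row gives $\tilde{\Psi}=\Delta^k(U\nu)$; hence $\bb_U\mysquareb\fs^{(k)}_\nu = H(\tilde{\Psi};U\nu)=\fs^{(k)}_{U\nu}$. The main obstacle I anticipate is entirely in the second paragraph: tracking the row lengths carefully enough that the wall hypothesis of Lemma~\ref{l cascading toggle lemma} holds at every single deletion, and pinpointing the one place (the bottom root $i=r$ of each processed column) where the hypothesis $\nr(\Psi)_1 \ge \min(r-1,\ell-r-1)$ is genuinely used. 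Everything else is formal.
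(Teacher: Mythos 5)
Your proof is correct and takes essentially the same approach as the paper's. Your column-by-column, bottom-to-top deletion order is in fact identical to the paper's iterated sweep order (each of the paper's sweeps, with $z = r-1,\dots,j$, removes the leftmost root from rows $j+1,\dots,r$ in turn, which is exactly your processing of a single column of the excess set from bottom to top); the only difference is that you spell out the ceiling/wall/weight/removability checks for Lemma~\ref{l cascading toggle lemma} and pinpoint where the hypothesis $\nr(\Psi)_1 \ge \min(r-1,\ell-r-1)$ enters, which the paper leaves implicit.
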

\begin{proof}
By Proposition \ref{p Catalan box times} and \eqref{e Catalan op eq fun}, $\bb_U \mysquareb H(\Psi; \gamma) =
H(\varnothing_r \uplus \Psi;U \gamma)$.
Then applying Lemma~\ref{l cascading toggle lemma} with $z = r-1, r-2, \dots, 1$ shows that
$H(\varnothing_r \uplus \Psi; U \gamma) = H(\Phi;U \gamma)$, where the root ideal  $\Phi \subset \Delta^+_\ell$ is obtained from  $\varnothing_r \uplus \Psi$ by removing one root in rows  $2,\dots, r$.  We can again modify  $\Phi$ using Lemma~\ref{l cascading toggle lemma} with $z = r-1, r-2, \dots, 2$ to remove one root in rows  $3,\dots, r$.
Continuing in this way, we obtain the root ideal  $\tilde{\Psi}$.
\end{proof}

Throughout this section we will work with the following  free $\ZZ[t]$-submodules of $\Lambda^k$\,:
\begin{align*}
\Omega^{k,a} &:= \Span_{\ZZ[t]} \{ \fs^{(k)}_\mu \mid \mu\in \Par^k, \, \mu_1 = a \}, \\
\Lambda^{k,d}&:= \textstyle \bigoplus_{a \le d}  \Omega^{k,a} =  \Span_{\ZZ[t]} \{ \fs^{(k)}_\mu \mid \mu \in \Par^{d}\}.
\end{align*}

The next result shows that the strong Pieri operators commute with certain generalized Hall-Littlewood vertex operators $\bb_U$, up to a shift in indices.
Be aware that although the strong Pieri operators act on the
right and the generalized Hall-Littlewood vertex operators act on the left, they do not commute,
so we must take care with parentheses.

\begin{lemma}
\label{l commute with square}
Let $U = (k-r+1)^r$ be a $k$-rectangle and $\nu \in \Par^{k-r+1}_{\ell-r}$ so that  $U\nu\in \Par^k_\ell$.
Let $w = w_1 \cdots w_d$ be a word in letters $[\ell-r]$ and let $w^+  = (w_1+r) \cdots (w_{d}+r)$.
Suppose
\begin{align}
\label{e commute with square hyp}
\text{$\fs^{(k)}_\nu \cdot u_{w_1} \cdots u_{w_j} \in \Lambda^{k,k-r+1}$ \quad for all  $j \in [d]$.}
\end{align}
Then
\begin{align}
\label{el commute with square}
\bb_U \mysquare \left(\fs^{(k)}_\nu \cdot \myhat{u}_{w}\right) =
\left(\bb_U \mysquareb \fs^{(k)}_\nu \right) \cdot u_{w^+} = \fs^{(k)}_{U\nu} \cdot u_{w^+}.
\end{align}
Additionally,
$\fs^{(k)}_{U\nu} \cdot u_{w^+} \in \Span_{\ZZ[t]} \! \big\{\fs^{(k)}_{U\beta} \mid \beta \in \Par^{k-r+1} \big\}$.
\end{lemma}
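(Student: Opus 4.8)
The plan is to peel the statement down, via the hypothesis \eqref{e commute with square hyp} and an induction on the length $d$ of $w$, to a single ``one-operator'' commutation identity, which I would then prove by direct manipulation of Catalan functions. The identity I would first establish is: for every $\beta\in\Par^{k-r+1}$ and every $q\in\ZZ_{\ge 1}$,
\[
\bb_U \mysquareb \fs^{(k)}_\beta = \fs^{(k)}_{U\beta}
\qquad\text{and}\qquad
\bb_U \mysquare\big(\fs^{(k)}_\beta\cdot u_q\big) = \fs^{(k)}_{U\beta}\cdot u_{q+r},
\]
where $U\beta$ is the concatenation, a partition in $\Par^k$ since $\beta_1\le k-r+1$. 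The first equality is Lemma~\ref{l multiply by rectangle easy} applied with ambient length $r+\ell(\beta)$ and $\Psi=\Delta^k(\beta)$: one checks directly from Definition~\ref{d0 catalan kschur} that the output root ideal $\tilde\Psi$ of that lemma is $\Delta^k(U\beta)$ (using $(U\beta)_i=k-r+1$ for $i\le r$), and that its hypothesis on $\nr(\Delta^k(\beta))_1$ holds precisely because $\beta_1\le k-r+1$. For the second equality I would split on $q$: if $q>\ell(\beta)$ then both $\fs^{(k)}_\beta\cdot u_q$ and $\fs^{(k)}_{U\beta}\cdot u_{q+r}$ vanish, since by \eqref{ec flm k schur} the relevant Catalan function has (after padding with zeros) a weight whose last coordinate is $-1$, forcing its Schur function --- hence the Catalan function itself --- to be $0$ by \eqref{ed s gamma}; if $q\le\ell(\beta)$, then $\fs^{(k)}_\beta\cdot u_q=H(\Delta^k(\beta);\beta-\epsilon_q)$ by \eqref{ec flm k schur}, and, since $\bb_U=\HHH^{\varnothing_r}_U$, Proposition~\ref{p Catalan box times} together with \eqref{e Catalan op eq fun} gives $\bb_U\mysquareb H(\Delta^k(\beta);\beta-\epsilon_q)=H(\varnothing_r\uplus\Delta^k(\beta);\,U\beta-\epsilon_{q+r})$, which Lemma~\ref{l multiply by rectangle easy} rewrites as $H(\Delta^k(U\beta);U\beta-\epsilon_{q+r})$, and this is $\fs^{(k)}_{U\beta}\cdot u_{q+r}$ by \eqref{ec flm k schur} once more.

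Next I would run the induction on $d$. Put $f_j=\fs^{(k)}_\nu\cdot u_{w_1}\cdots u_{w_j}$ for $0\le j\le d$, so $f_0=\fs^{(k)}_\nu$ and $f_d=\fs^{(k)}_\nu\cdot\myhat{u}_w$, and prove $\bb_U\mysquareb f_j=\fs^{(k)}_{U\nu}\cdot u_{(w_1+r)\cdots(w_j+r)}$ by induction on $j$; the case $j=0$ is the first one-operator identity with $\beta=\nu$. For the inductive step, \eqref{e commute with square hyp} lets me write $f_j=\sum_{\beta\in\Par^{k-r+1}}c_\beta(t)\,\fs^{(k)}_\beta$ with $c_\beta(t)\in\ZZ[t]$, and then $\ZZ[t]$-linearity of $u_{w_{j+1}}$ and of $\bb_U$, combined with the one-operator identities, gives
\begin{align*}
\bb_U\mysquareb f_{j+1}
&=\sum_\beta c_\beta\,\bb_U\mysquare\big(\fs^{(k)}_\beta\cdot u_{w_{j+1}}\big)
=\sum_\beta c_\beta\,\fs^{(k)}_{U\beta}\cdot u_{w_{j+1}+r}\\
&=\Big(\sum_\beta c_\beta\,\fs^{(k)}_{U\beta}\Big)\cdot u_{w_{j+1}+r}
=(\bb_U\mysquareb f_j)\cdot u_{w_{j+1}+r},
\end{align*}
which with the inductive hypothesis closes the step. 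Taking $j=d$ gives $\bb_U\mysquare(\fs^{(k)}_\nu\cdot\myhat{u}_w)=\fs^{(k)}_{U\nu}\cdot u_{w^+}$, and combining this with the first one-operator identity for $\beta=\nu$ yields all of \eqref{el commute with square}. For the final assertion, $f_d\in\Lambda^{k,k-r+1}$ by \eqref{e commute with square hyp}, so writing $f_d=\sum_\beta c_\beta\,\fs^{(k)}_\beta$ ($\beta\in\Par^{k-r+1}$) and applying $\bb_U\mysquareb\fs^{(k)}_\beta=\fs^{(k)}_{U\beta}$ termwise gives $\fs^{(k)}_{U\nu}\cdot u_{w^+}=\bb_U\mysquareb f_d=\sum_\beta c_\beta\,\fs^{(k)}_{U\beta}\in\Span_{\ZZ[t]}\{\fs^{(k)}_{U\beta}\mid\beta\in\Par^{k-r+1}\}$.

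I expect the main obstacle to be the Catalan-function computation behind the one-operator step: verifying that $\bb_U\mysquareb$ sends $H(\Delta^k(\beta);\gamma)$ to $H(\Delta^k(U\beta);U\gamma)$, i.e.\ that the row-by-row bookkeeping in Lemma~\ref{l multiply by rectangle easy} outputs exactly $\Delta^k(U\beta)$ and that $\beta_1\le k-r+1$ is precisely the condition under which that lemma applies. Everything else is formal --- the composition law for Catalan operators, linearity, and keeping the left-acting $\bb_U$ and the right-acting $u_p$'s in their proper order --- with \eqref{e commute with square hyp} being exactly what licenses replacing each intermediate $f_j$ by a $\ZZ[t]$-combination of single $k$-Schur functions indexed by partitions whose first part is $\le k-r+1$, to which the one-operator identity then applies.
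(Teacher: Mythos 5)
Your argument is correct and is essentially the paper's proof, reorganized: the paper runs a single induction on $d$, peeling off the last letter $w_d=p$, writing $\fs^{(k)}_\nu\cdot\myhat{u}_v=\sum_{\eta\in\Par^{k-r+1}_{\ell-r}}c_\eta\,\fs^{(k)}_\eta$ via \eqref{e commute with square hyp}, and then applying \eqref{ec flm k schur} and Lemma~\ref{l multiply by rectangle easy} term by term—exactly the content of your ``one-operator'' identity. Your only deviation is cosmetic: you work with $\beta\in\Par^{k-r+1}$ unpadded and therefore must dispose of the $q>\ell(\beta)$ case separately, whereas the paper pads $\eta$ to length $\ell-r\ge p$ so that \eqref{ec flm k schur} always applies with no vanishing case to check.
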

\begin{proof}
The proof of \eqref{el commute with square} is by induction on  $d$.
The second equality holds by Lemma~\ref{l multiply by rectangle easy};  this given, the base case  $d=0$
is trivial and it remains to prove that for  $d > 0$
the left and right sides of \eqref{el commute with square} agree.
Set $v = w_1 \cdots w_{d-1}$, $v^+ = (w_1+r) \cdots (w_{d-1}+r)$, and  $p =w_d$.
By \eqref{e commute with square hyp}, we can write
$\fs^{(k)}_\nu \cdot \myhat{u}_{v}  =
\sum_{\eta \in \Par^{k-r+1}_{\ell-r}} c_\eta \, \fs^{(k)}_\eta$ for  $c_\eta  \in \NN[t]$,
where we have used $\fs^{(k)}_{\mu}= \fs^{(k)}_{(\mu,0)} = \fs^{(k)}_{(\mu,0,0)} = \cdots$ to write the sum over
$\Par^{k-r+1}_{\ell-r}$ rather than  $\Par^{k-r+1}$.
Then by the inductive hypothesis and Lemma~\ref{l multiply by rectangle easy},
\begin{align}
\label{e ind hyp}
\fs^{(k)}_{U \nu} \cdot u_{v^+} =
\bb_U \mysquare (\fs^{(k)}_\nu \cdot \myhat{u}_{v})
= \bb_U \mysquare \bigg( \sum_{\eta \in \Par^{k-r+1}_{\ell-r}} \!\!\! c_\eta \, \fs^{(k)}_\eta \bigg)
 = \sum_{\eta \in \Par^{k-r+1}_{\ell-r}} \!\!\! c_\eta \, \fs^{(k)}_{U \eta}.
\end{align}
We compute
\begin{align*}
\bb_U \mysquare (\fs^{(k)}_\nu \cdot \myhat{u}_{v} \myhat{u}_{p}) =&
\sum_{\eta \in \Par^{k-r+1}_{\ell-r}} \!\!\! c_\eta \, \bb_U \mysquare (\fs^{(k)}_\eta  \cdot \myhat{u}_{p}) =
\sum_{\eta \in \Par^{k-r+1}_{\ell-r}} \!\!\! c_\eta \, \bb_U \mysquareb H(\Delta^k(\eta); \eta- \epsilon_{p})
 \\
=& \sum_{\eta \in \Par^{k-r+1}_{\ell-r}} \!\!\! c_\eta \,  H(\Delta^k(U\eta); U\eta - \epsilon_{p+r})  =
\sum_{\eta \in \Par^{k-r+1}_{\ell-r}} \!\!\! c_\eta \, \fs^{(k)}_{U \eta} \cdot u_{p+r} =
\fs^{(k)}_{U \nu} \cdot u_{w^+}.
\end{align*}
The second and fourth equalities are by the description \eqref{ec flm k schur} of the strong Pieri operators,
the third equality is by Lemma~\ref{l multiply by rectangle easy}, and the fifth is by \eqref{e ind hyp}.

For the second statement, \eqref{el commute with square}, \eqref{e commute with square hyp},  and Lemma~\ref{l multiply by rectangle easy} yield
\[
\fs^{(k)}_{U\nu} \cdot u_{w^+}
= \bb_U \mysquare \left(\fs^{(k)}_\nu \cdot \myhat{u}_{w}\right)
\in \bb_U \mysquare \Lambda^{k, k-r+1} \subset
 \Span_{\ZZ[t]}\! \big\{\fs^{(k)}_{U\beta} \mid \beta \in \Par^{k-r+1} \big\}.
 \qedhere
\]
\end{proof}

It turns out that under a mild assumption, the strong Pieri operators in Theorem \ref{t schur kschur} do not change the first part of the partition:

\begin{lemma}
\label{c height 1 ribbon}
As in  Theorem \ref{t schur kschur}, let
$\mu \in \Par^{k-r+1}_r$ and $\nu \in \Par^k$ be such that $\mu\nu$ is a partition;
set $U = (k-r+1)^r$.
Suppose the first row of  $U/\mu$ is empty, i.e., $\mu_1 = U_1$.
Let  $T \in \SSYT_{U/\mu}([r])$ and $w = w_1 \cdots w_{|U/\mu|} = \creading(T)$.
Then for any  $j \le |U/\mu|$,
\begin{align}
\label{et height 1 ribbon}
\fs^{(k)}_{U\nu} \cdot u_{w_1} \cdots u_{w_j} \, \in \Omega^{k,k-r+1}.
\end{align}
\end{lemma}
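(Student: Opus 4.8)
The plan is to prove \eqref{et height 1 ribbon} by induction on $j$, in the equivalent form that the $k$-Schur expansion of $\fs^{(k)}_{U\nu}\cdot u_{w_1}\cdots u_{w_j}$ involves only $\fs^{(k)}_\eta$ with $\eta_1=k-r+1$. First note we may assume $\mu\neq U$ (otherwise $U/\mu=\varnothing$ and there is nothing to prove), whence $r\ge 2$, $\mu_r<k-r+1$, and $\nu_1\le\mu_r<k-r+1$; the base case $j=0$ holds since $(U\nu)_1=k-r+1$, and via Proposition~\ref{p trailing zeros} we fix a common length $\ell\ge\ell(U\nu)\ge r$ for all the $\eta$ that occur. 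For the inductive step I would fix a term $\fs^{(k)}_\eta$ with $\eta_1=k-r+1$, set $p=w_j\in[r]$ and $\Phi=\Delta^k(\eta)$, and observe that since $\eta_i\le\eta_1=k-r+1\le k-r+i$ for $i\le r$ we have $\nr(\Phi)_i=\min(\ell-i,\,k-\eta_i)\ge r-i$ for all $i\in[r]$, so $\down_\Phi(i)>r$ for $i\le r$ and each bounce path of $\Phi$ meets $[r]$ in at most one point; in particular $\downpath_\Phi(p)\cap[r]=\{p\}$. Corollary~\ref{c expand downpath} (with its parameter ``$r$'' taken to be ours; the weight hypothesis is deferred) then gives
\[
\fs^{(k)}_\eta\cdot u_p\;=\;H(\Phi;\eta-\epsilon_p)\;=\;\sum_{\substack{z\in\downpath_\Phi(p)\\ \eta-\epsilon_z\in\Par^k_\ell}} t^{\,B_\Phi(p,z)}\,\fs^{(k)}_{\eta-\epsilon_z},
\]
and for $z\ne p$ (so $z>r\ge 2$) as well as for $z=p\ge 2$ one has $(\eta-\epsilon_z)_1=\eta_1=k-r+1$. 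Hence a term of first part $<k-r+1$ can appear only through $p=1$ together with the $z=1$ term, which is present precisely when $\eta-\epsilon_1\in\Par^k_\ell$, i.e.\ when $\eta_1>\eta_2$.

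So the induction reduces to the assertion that \emph{if $w_j=1$ then every $\eta$ occurring in $\fs^{(k)}_{U\nu}\cdot u_{w_1}\cdots u_{w_{j-1}}$ has $\eta_2=k-r+1$}. Suppose not, $\eta_2<k-r+1$. Because the strong-Pieri structure constants are nonnegative, $\fs^{(k)}_\eta$ occurs only if there is a chain $U\nu=\eta^{(0)},\eta^{(1)},\dots,\eta^{(j-1)}=\eta$ with $\eta^{(m)}=\eta^{(m-1)}-\epsilon_{z_m}$ and $z_m\in\downpath_{\Delta^k(\eta^{(m-1)})}(w_m)$. Applying the single-step analysis at each $\eta^{(m-1)}$ (whose first part is $k-r+1$ by the inductive hypothesis), each $z_m$ equals $w_m$ or exceeds $r$; consequently, for $i\le r$ the coordinate $(\eta^{(\cdot)})_i$ can decrease only at a step $m$ with $w_m=z_m=i$, and such a step requires $\eta^{(m-1)}_i>\eta^{(m-1)}_{i+1}$. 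I would then run the cascade: $\eta^{(j-1)}_2<k-r+1=\eta^{(0)}_2$ forces a least step $m_2<j$ with $w_{m_2}=2$ at which $\eta^{(m_2-1)}_2=k-r+1>\eta^{(m_2-1)}_3$, so $\eta^{(m_2-1)}_3<k-r+1$; iterating produces $m_r<m_{r-1}<\dots<m_2<j$ with $w_{m_i}=i$ for $i=2,\dots,r$ (the cascade terminates at $i=r$ since $\eta^{(m_r-1)}_{r+1}\le\nu_1<k-r+1=\eta^{(m_r-1)}_r$, so the inequality needed to decrease $\eta_r$ is automatic). Together with $w_j=1$, the positions $m_r<m_{r-1}<\dots<m_2<j$ carry the values $r,r-1,\dots,2,1$: a strictly decreasing subsequence of $\creading(T)$ of length $r$.

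This is impossible, by the general fact that \emph{the column reading word of a skew semistandard tableau has no strictly decreasing subsequence longer than its number of nonempty rows}. Indeed, if boxes $\beta_1,\dots,\beta_s$ realize a strictly decreasing subsequence of $\creading(T)$ (so $\beta_t$ precedes $\beta_{t+1}$ in the reading order and $T(\beta_t)>T(\beta_{t+1})$), then no two $\beta_t$ lie in a common row: if $\beta_t,\beta_{t'}$ with $t<t'$ were in one row, weak increase along that row would force $\beta_t$ strictly east of $\beta_{t'}$, contradicting that $\beta_t$ is read first (columns are read left to right). Since $\mu_1=U_1$ makes row $1$ of $U/\mu$ empty, every $T\in\SSYT_{U/\mu}([r])$ has at most $r-1$ nonempty rows, so $\creading(T)$ has no strictly decreasing subsequence of length $r$. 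This contradicts the cascade, proves the displayed assertion, and completes the induction.

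The hard part will be the one point deferred above: the weight hypothesis of Corollary~\ref{c expand downpath} in the single-step analysis. It is automatic unless $p=r<\ell$ and $\eta_r=\eta_{r+1}$, and in that borderline case one must argue directly that $H(\Delta^k(\eta);\eta-\epsilon_r)\in\Omega^{k,k-r+1}$ — for instance, after removing a removable root of $\Delta^k(\eta)$, by invoking Lemma~\ref{l cascading toggle lemma} and Lemma~\ref{l toggle lemma zero} to see that the would-be ``$z=r$'' contribution vanishes while the remaining contributions are ordinary $k$-Schur functions of first part $k-r+1$. Pinning down this boundary situation cleanly, together with the trailing-zero bookkeeping, is where the labor lies; all of the conceptual content sits in the cascade together with the decreasing-subsequence bound.
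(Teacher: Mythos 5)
Your proof is structured as the paper's is: apply Corollary~\ref{c expand downpath} to each strong Pieri operator, use $\nr(\Delta^k(\eta))_i \ge r-i$ to see that each $\downpath$ meets $[r]$ in a singleton (so the coordinates $\eta_1,\dots,\eta_r$ can only decrease at a step with $z_m = w_m = i$), run the cascade to see that $\eta_1$ dropping below $k-r+1$ forces $r,r-1,\dots,2,1$ as a subsequence of $\creading(T)$, and kill this by the bound that a strictly decreasing subsequence of $\creading(T)$ uses at most one box per row, of which there are at most $r-1$ because row $1$ of $U/\mu$ is empty. All of this matches the paper.

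The genuine gap is exactly the point you flag at the end: the weight hypothesis of Corollary~\ref{c expand downpath} in the case $p=r<\ell$ with $\eta_r=\eta_{r+1}$. You propose to ``argue directly'' via Lemmas~\ref{l cascading toggle lemma} and~\ref{l toggle lemma zero}, but this is not clearly viable --- for example, $\Delta^k(\eta)$ need not have a ceiling in columns $r,r+1$ when $\eta_1=k-r+1$, so Lemma~\ref{l toggle lemma zero} does not apply to the offending $z=r$ term in the way you suggest, and whether the surgery you sketch can be made to close is unclear. The paper instead shows that this borderline situation \emph{never occurs}, by a counting argument you almost have in hand: since column entries in $T\in\SSYT_{U/\mu}([r])$ strictly increase and $U/\mu$ is a skew of a rectangle (so $(i,c)\in U/\mu$ and $i<r$ forces $(i+1,c)\in U/\mu$), every $r$ in $T$ lies in row $r$, hence $\creading(T)$ contains at most $U_r-\mu_r$ copies of $r$. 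Since $z_m=r$ requires $w_m=r$, at any step with $w_a=r$ we have $\eta^{(a-1)}_r \ge (k-r+1) - \big((U_r-\mu_r)-1\big) = \mu_r+1 > \nu_1 \ge \eta^{(a-1)}_{r+1}$, so the hypothesis $\eta_r > \eta_{r+1}$ is automatic. This replaces your deferred ``boundary analysis'' with the observation that the boundary is vacuous; there is also no trailing-zero bookkeeping to do once this is in place.
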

\begin{proof}
Write $\fs^{(k)}_{U\nu} \cdot u_{w_1} \cdots u_{w_j} =  \sum_{\tau \in \Par^k} c_\tau \fs^{(k)}_{\tau} $ with coefficients $c_\tau \in \NN[t]$.
We apply Corollary \ref{c expand downpath} for each strong Pieri operator  $u_{w_1}, \dots, u_{w_j}$.
The assumptions of the corollary are satisfied: any
term in the $k$-Schur function expansion of  $\fs^{(k)}_{U\nu} \cdot u_{w_1} \cdots u_{w_i}$ for  $i \in [j]$ is of the form
$d_\eta\, \fs^{(k)}_\eta$ for  $\eta \subset U\nu$ and thus $\nr(\Delta^k(\eta))_i \ge \nr(\Delta^k(U\nu))_i \ge r-i$ for $i \in [r]$;
the second assumption $\big( p = r < \ell  \, \implies \, \eta_r > \eta_{r+1} \big)$ follows from $\mu_r \ge \nu_1$ and the fact that the number of $r$'s in the word $w$ is
at most $U_r-\mu_r$.
The corollary then tells us that
$c_\tau = 0$ unless there exists a saturated chain
$U\nu = \tau^0 \supset \tau^1 \supset \cdots \supset \tau^j = \tau$ in Young's lattice
with  $\tau^a = \tau^{a-1} - \epsilon_{b_a}$ for all  $a \in [j]$, where the integers $b_1, \dots, b_j$ satisfy $b_1 \ge w_1, \, \dots,\, b_j \ge w_j$.
It follows that if some $c_\tau \ne 0$ with  $\tau_1 \ne U_1= k-r+1$, then  $w_1 \cdots w_j$ contains
 $r \ r-1 \, \cdots \, 2 \ 1$ as a subsequence.
But this is impossible since the first row of  $U/\mu$ is empty and any strictly decreasing subsequence of  $w = \creading(T)$
uses at most one entry from each row of  $T$.
This proves that $\tau_1 = U_1 = k-r+1$ whenever $c_\tau \ne 0$, as desired.
\end{proof}

\begin{proof}[Proof of Theorem \ref{t k split to k schur}]
We prove by induction on $d$ the following stronger claim:
\break
in addition to \eqref{et k split to k schur},
for a $u$-monomial $u_w = u_{w_1} \cdots u_{w_m}$ arising in the expansion of
 $\Big( \sum_{T \in \SSYT_{\theta^d}(N_d)} \! u_{\creading(T)}\Big)
\cdots
\Big( \sum_{T \in \SSYT_{\theta^1}(N_1)} \! u_{\creading(T)}\Big)$
as a sum of  $u$-monomials, we have $\fs^{(k)}_{U} \cdot u_{w_1} \cdots u_{w_j} \in \Omega^{k, k-r_1+1} = \Omega^{k,\lambda_1}$ for all  $j \in [m]$.

It makes sense to take the base case to be  $d=0$ which occurs exactly when  $\lambda$ is the empty partition, and then
 $G^{(k)}_\emptyset = 1 = \fs^{(k)}_\emptyset$ is the desired result.
 Now assume  $d > 0$.

Since ($\lambda^2, \dots, \lambda^d$) is the  $k$-split of  $(\lambda_{r_1+1}, \lambda_{r_1+2}, \dots)$, the inductive hypothesis yields
\begin{align}
\label{e k split to k schur induct 0}
\!\! \bb_{\lambda^2}\mysquare  \cdots \mysquare \bb_{\lambda^{d-1}} \mysquareb s_{\lambda^d} = \fs^{(k)}_{\hat{U}}
\cdot \bigg( \sum_{T \in \SSYT_{\theta^d}(\hat{N}_d)} \!\!\!\! u_{\creading(T)}\bigg)
\cdots
\bigg( \sum_{T \in \SSYT_{\theta^2}(\hat{N}_2)} \!\!\!\! u_{\creading(T)}\bigg)
\end{align}
where $\hat{U}$ is  the partition $U$ restricted to rows $> r_1$ and  $\hat{N}_j := \{i-r_1\mid i \in N_j\}$ for \break $j = 2, \dots, d$.
Moreover, by the inductive hypothesis for the strengthened claim,
the right side of \eqref{e k split to k schur induct 0} is a sum of $k$-Schur functions $\sum_{\nu}c_{\nu} \, \fs^{(k)}_\nu$ over
$\big\{\nu \in \Par^k \mid \nu_1 = (\lambda^2)_1 \big\}$ provided  $d > 1$ (if  $d=1$, \eqref{e k split to k schur induct 0}
simply reads $G^{(k)}_\emptyset = \fs^{(k)}_\emptyset$).
Hence for each  $\nu$ appearing in this sum, Theorem \ref{t schur kschur} applies and yields
\begin{align}
\label{e k split k schur 2}
\bb_{\lambda^{1}} \mysquareb \fs^{(k)}_\nu = \fs^{(k)}_{U^1 \nu} \cdot \sum_{T \in \SSYT_{\theta^{1}}(N_1)} \!\! u_{\creading(T)}
 = \big(\bb_{U^1} \mysquareb \fs^{(k)}_{\nu} \big) \cdot \sum_{T \in \SSYT_{\theta^{1}}(N_1)} \!\! u_{\creading(T)} \, ,
\end{align}
where the second equality is by Lemma~\ref{l multiply by rectangle easy}.

Applying $\bb_{\lambda^1} \mysquare$ to both sides of \eqref{e k split to k schur induct 0} yields
\begin{align*}
&G^{(k)}_\lambda =  \bb_{\lambda^1} \mysquare \left( \fs^{(k)}_{\hat{U}}
\cdot \Bigg( \sum_{T \in \SSYT_{\theta^d}(\hat{N}_d)} \!\!\!\!\!\!\! u_{\creading(T)}\Bigg)
\cdots
\Bigg( \sum_{T \in \SSYT_{\theta^2}(\hat{N}_2)} \!\!\!\!\!\!\! u_{\creading(T)}\Bigg)\right)\\
&= \left( \bb_{U^1} \mysquare \left( \fs^{(k)}_{\hat{U}}
\cdot \Bigg( \sum_{T \in \SSYT_{\theta^d}(\hat{N}_d)} \!\!\!\!\!\!\!  u_{\creading(T)}\Bigg)
\cdots
\Bigg( \sum_{T \in \SSYT_{\theta^2}(\hat{N}_2)} \!\!\!\!\!\!\! u_{\creading(T)}\Bigg)\right) \right) \cdot \sum_{T \in \SSYT_{\theta^{1}}(N_1)} \!\!\!\!\!\!\! u_{\creading(T)}\\
&= \fs^{(k)}_{U} \cdot \Bigg( \sum_{T \in \SSYT_{\theta^d}(N_d)} \!\!\!\!\!\!\! u_{\creading(T)}\Bigg)
\cdots
\Bigg( \sum_{T \in \SSYT_{\theta^2}(N_2)} \!\!\!\!\!\!\! u_{\creading(T)}\Bigg)
\Bigg(\sum_{T \in \SSYT_{\theta^{1}}(N_1)} \!\!\!\!\!\!\! u_{\creading(T)}\Bigg),
\end{align*}
where the second equality is by \eqref{e k split k schur 2} and the third is by Lemma~\ref{l commute with square} (the assumption \eqref{e commute with square hyp} holds by the inductive hypothesis for the strengthened claim).
This proves \eqref{et k split to k schur}.

To complete the proof of the strengthened claim, let  $u_w = u_{w_1} \cdots u_{w_j}$,  $j \in [m]$, be as in the first paragraph of the proof.
Set  $m' = \min(|\theta^d| + \dots + |\theta^2|, j)$.
Write  $w_1\cdots w_j = v v'$, where  $v = w_1 \cdots w_{m'}$ and  $v' = w_{m'+1} \cdots w_{j}$ ($v$ or  $v'$ may be empty).
By the second statement in Lemma~\ref{l commute with square} applied with  $r = r_1$, $U =U^1$,  $\nu = \hat{U}$, and  $w^+ = v$ (again, the assumption \eqref{e commute with square hyp} holds by the inductive hypothesis for the
strengthened claim),
\[
\fs^{(k)}_{U} \cdot u_{v}  = \sum_{\eta \in \Par^{k - r_1 + 1}} d_\eta \, \fs^{(k)}_{U^1 \eta} \text{\qquad for  $d_\eta \in \NN[t]$}.
\]
Lemma~\ref{c height 1 ribbon} then yields (the first row of $\theta^1 = U^1/ \lambda^1$ is empty since  $k-r_1+1 = \lambda_1$)
\[
\fs^{(k)}_{U} \cdot u_{w_1} \cdots u_{w_{j}}= \fs^{(k)}_{U} \cdot u_{v} u_{v'}
  = \bigg( \sum_{\eta \in \Par^{k-r_1+1}} d_\eta \, \fs^{(k)}_{U^1 \eta} \bigg) \cdot u_{v'} \in \Omega^{k,k-r_1+1}.
\qedhere
\]
\end{proof}

\begin{proof}[Proof of Theorem \ref{t k split eq catalan}]
We prove $\tilde{A}^{(k)}_\mu = \fs^{(k)}_{\mu}$ by induction on $\ell(\mu)$.
The base case  $\ell(\mu) = 0$ holds since  $\tilde{A}^{(k)}_\emptyset = 1 = \fs^{(k)}_\emptyset$.
Now assume  $\ell(\mu) > 0$ and
set $\hat{\mu} := (\mu_2, \mu_3, \dots, \mu_\ell)$.
By the inductive hypothesis,  $\tilde{A}^{(k)}_{\hat{\mu}} =\fs^{(k)}_{\hat{\mu}}$.
We evaluate definition \eqref{ed k split} by first computing $\bb_{\mu_1} \mysquareb \tilde{A}^{(k)}_{\hat{\mu}} = \bb_{\mu_1} \mysquareb \fs^{(k)}_{\hat{\mu}}$.
Corollary~\ref{c Br on k schur} yields the first equality below:
\begin{align}
\label{e expand Br}
\bb_{\mu_1} \mysquareb \fs^{(k)}_{\hat{\mu}}
= \fs^{(k)}_{(k,\hat{\mu})} \cdot u_1^{k-\mu_1}
= \fs^{(k)}_\mu + \sum_{\{\nu \in \Par^k \, \mid \, \nu_1 > \mu_1\}} c_{\nu} \, \fs^{(k)}_\nu,  \quad \text{ for coefficients $c_\nu \in \NN[t]$.}
\end{align}
The second equality follows from applying Corollary \ref{c expand downpath}  $k-\mu_1$ times with  $r=1$, noting that the assumption
$\eta_r > \eta_{r+1}$ holds by $\mu_1 \ge \mu_2$.

By the strengthened claim in the proof of Theorem \ref{t k split to k schur},  $G^{(k)}_\lambda \in \Omega^{k,\lambda_1}$
for all  $\lambda \in \Par^k$. It follows that
\begin{align*}
& \tilde{\Omega}^{k,a} = \, \Span_{\ZZ[t]} \! \big\{G^{(k)}_\lambda \mid \lambda \in \Par^k, \lambda_1 = a\big\}
 \ = \ \Span_{\ZZ[t]} \! \big\{\fs^{(k)}_\lambda \mid \lambda \in \Par^k, \lambda_1 = a\big\} = \Omega^{k,a}
\end{align*}
for all $a \in [0,k]$.
Hence applying the projection  $\pi^{k,d}$ from definition \eqref{ed k split} to the right side of
\eqref{e expand Br} sends the term $\fs^{(k)}_\mu$ to itself and sends each term $c_\nu \, \fs^{(k)}_\nu$ in the sum to 0, giving
\[\tilde{A}^{(k)}_{\mu} = \pi^{k,\mu_1}(\bb_{\mu_1} \mysquareb \tilde{A}^{(k)}_{\hat{\mu}} ) = \pi^{k,\mu_1}(\bb_{\mu_1} \mysquareb \fs^{(k)}_{\hat{\mu}} ) = \fs^{(k)}_\mu. \qedhere \]
\end{proof}

\vspace{2mm}
\noindent
\textbf{Acknowledgments.}
We are grateful to Mark Shimozono for helpful conversations and
to Elaine~So for help typing and typesetting figures.

\bibliographystyle{plain}
\bibliography{mycitations}
\def\cprime{$'$} \def\cprime{$'$} \def\cprime{$'$}

\end{document}